\newcommand{\zprim}{\mathbb{Z}_\text{prim}^s}
\newcommand{\R}{\mathbb{R}}
\newcommand{\vecx}{\bold{x}}
\newcommand{\vecv}{\bold{v}}
\newcommand{\vecy}{\bold{y}}
\newcommand{\vecw}{\bold{w}}
\newcommand{\vecz}{\bold{z}}
\newcommand{\vecb}{\bold{b}}
\newcommand{\vecu}{\bold{u}}
\newcommand{\veca}{\bold{a}}
\newcommand{\vect}{\bold{t}}
\newcommand{\eps}{\varepsilon}
\numberwithin{equation}{section}
\theoremstyle{plain} %% This is the default, anyway
\theoremstyle{break}
\theoremstyle{break}
\newtheorem{thm}{Theorem}[section]
\newtheorem{lem}[thm]{Lemma}
\begin{document}

%%% In the title, use a double backslash "\\" to show a linebreak:
%%% Use one of the following two forms:
%%% \title{Text of the title}
%%% or
%%% \title[Short form for the running head]{Text of the title}
\title[Manin's conjecture for $x_1y_1^2 + \ldots + x_sy_s^2 =0$]{Density of Rational Points on diagonal bidegree $(1,2)$ hypersurface in $\mathbb{P}^{s-1} \times \mathbb{P}^{s-1}$}

%%% If there are multiple authors, they're described one at a time:
%%% First author: \author{} \address{} \curraddr{} \email{} \thanks{}
%%% Second author: \author{} \address{} \curraddr{} \email{} \thanks{}
%%% Third author: \author{} \address{} \curraddr{} \email{} \thanks{}
\author{Xun Wang}

%%% In the address, show linebreaks with double backslashes:
\address{}

%%% Current address is optional.
% \curraddr{}

%%% Email address is optional.
% \email{}

%%% If there's a second author:
% \author{}
% \address{}
% \curraddr{}
% \email{}

%%% To have the current date inserted, use \date{\today}:
\date{}

%%% To include an abstract, uncomment the following two lines and type
%%% the abstract in between them:
\begin{abstract}
In this paper we establish an asymptotic formula for the number of rational points of bounded anticanonical height which lie on a certain Zariski dense subset of the biprojective hypersurface
\begin{align*}
        x_1y_1^2+...+x_sy_s^2 = 0
\end{align*}
in $\mathbb{P}^{s-1} \times \mathbb{P}^{s-1} $ with $s \geq 7$. This confirms the Manin conjecture for this variety.
\end{abstract}

\maketitle

%%% To include a table of contents, uncomment the following line:
\tableofcontents

%%%-------------------------------------------------------------------
%%%-------------------------------------------------------------------
%%% Start the body of the paper here!  E.G., maybe use:
\section{Introduction}

Consider the biprojective hypersurface $V_s$ in $\mathbb{P}^{s-1} \times \mathbb{P}^{s-1}$ defined by the equation
\[ x_1 y_1^2 + \ldots + x_s y_s^2 =0.\] The purpose of this article is to establish Manin's conjecture for such varieties with $s \geq 7$.
For a point $(x,y) \in V_s(\mathbb{Q})$ represented by a vector $(\mathbf{x}, \mathbf{y}) \in \mathbb{Z}^s_{\text{prim}} \times \mathbb{Z}^s_{\text{prim}}$, the anticanonical height function is given by $H(x,y) = \vert \mathbf{x} \vert^{s-1} \vert \mathbf{y} \vert^{s-2}$, with $\vert \cdot \vert: \mathbb{R}^s \rightarrow \mathbb{R}$ the usual sup-norm.
\begin{thm}
If $s\geq 7$ then there exists an open subset $U \subset V_s$ such that 
\[
\#\{ (x,y) \in U(\mathbb{Q}) : H(x,y) \leq B\}
\sim c_{\text{Peyre}} B \log B,
\] as $B \rightarrow \infty$.
\end{thm}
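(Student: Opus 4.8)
The plan is to use the conic-bundle structure of $V_s$. The defining form $x_1y_1^2+\cdots+x_sy_s^2$ is linear in $\mathbf{x}$ and is the quadratic form $Q_{\mathbf{x}}(\mathbf{y})=\sum_i x_i y_i^2$ in $\mathbf{y}$; projecting $([\mathbf{x}],[\mathbf{y}])\mapsto[\mathbf{x}]$ exhibits $V_s$ as a family of quadrics over $\mathbb{P}^{s-1}$, the fibre over $[\mathbf{x}]$ being $\{Q_{\mathbf{x}}=0\}\subset\mathbb{P}^{s-1}$. After passing to primitive integral vectors there are two dual ways to count a point of bounded height: for fixed $\mathbf{y}$, count primitive $\mathbf{x}$ on the hyperplane $(y_1^2,\dots,y_s^2)\cdot\mathbf{x}=0$ in the box $|\mathbf{x}|\le(B/|\mathbf{y}|^{s-2})^{1/(s-1)}$ — a geometry-of-numbers problem, since $(y_1^2,\dots,y_s^2)$ is primitive whenever $\mathbf{y}$ is, the corresponding lattice being saturated of covolume $(\sum_i y_i^4)^{1/2}\asymp|\mathbf{y}|^2$ — or, for fixed $\mathbf{x}$, count primitive $\mathbf{y}$ on $Q_{\mathbf{x}}=0$ in the box $|\mathbf{y}|\le(B/|\mathbf{x}|^{s-1})^{1/(s-2)}$, a problem for the circle method. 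In either case the number of $\mathbf{y}$ for fixed $\mathbf{x}$ should be $\asymp\mathfrak{S}(\mathbf{x})\,\sigma_\infty(\mathbf{x})\,(B/|\mathbf{x}|^{s-1})$ with real density $\sigma_\infty(\mathbf{x})\asymp|\mathbf{x}|^{-1}$, hence $\asymp B/|\mathbf{x}|^{s}$, and $\sum_{\mathbf{x}}B/|\mathbf{x}|^{s}$ over $|\mathbf{x}|\le B^{1/(s-1)}$ is of logarithmic size — this is where the factor $\log B$ comes from. The task is to make this precise with the correct constant.

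The first step is to pin down the exceptional set. The fibre $\{Q_{\mathbf{x}}=0\}$ is a cone, over the quadric cut out by $Q_{\mathbf{x}}$ in the subspace spanned by the coordinates $y_i$ with $x_i\ne0$, of rank $r=\#\{i:x_i\ne0\}$. For $r\ge5$ that base quadric has Picard rank $1$ and the fibre carries $\asymp Y^{s-2}$ rational points of height $\le Y$; but for $r\le4$ the base acquires extra Picard rank — for $r=4$ it can be $\mathbb{P}^1\times\mathbb{P}^1$, for $r=3$ it is a conic, for $r\le2$ a union of linear spaces — and the fibre then carries $\asymp Y^{s-2}\log Y$ points (and up to $\asymp Y^{s-1}$ when $r\le2$), so summing over $[\mathbf{x}]$ with a bounded representative these degenerate fibres already contribute $\gg B\log B$, and from $r\le2$ even $\gg B^{(s-1)/(s-2)}$. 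These loci are accumulating and must be excised, so I take $U=V_s\setminus\mathcal{C}$, where $\mathcal{C}=\{([\mathbf{x}],[\mathbf{y}])\in V_s:\mathbf{x}\text{ has at most }4\text{ nonzero coordinates}\}$ is a closed subvariety of codimension $s-4\ge3$ in $V_s$. Counting $U(\mathbb{Q})$ then amounts to counting pairs of primitive vectors $(\mathbf{x},\mathbf{y})$ with $\sum_i x_iy_i^2=0$, $|\mathbf{x}|^{s-1}|\mathbf{y}|^{s-2}\le B$, and $\mathbf{x}$ having at least $5$ nonzero coordinates; the corresponding fibre quadrics have rank $\ge5$, so the circle method behaves well.

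I would then split this count according to the relative sizes of $|\mathbf{x}|$ and $|\mathbf{y}|$, fixed by a threshold on $|\mathbf{y}|$. In the range $|\mathbf{x}|\le B^{1/(s-1)-\epsilon}$, equivalently $|\mathbf{y}|\ge B^{\eta}$ with $\eta$ a small multiple of $\epsilon$ so that $|\mathbf{y}|$ runs over a genuine power of $B$, I fix $\mathbf{x}$ and count $\mathbf{y}$ on the nonsingular quadric cut out by the $r\ge5$ nonvanishing variables (with the other $y_i$ free) by Heath-Brown's delta-method form of the circle method, obtaining an asymptotic $\mathfrak{S}(\mathbf{x})\sigma_\infty(\mathbf{x},Y)Y^{s-2}$ with an error term uniform in the coefficient vector $\mathbf{x}$; primitivity of $\mathbf{y}$ is then imposed by Möbius inversion and the result summed over $\mathbf{x}$ with at least $5$ nonzero coordinates. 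In the complementary range $|\mathbf{x}|>B^{1/(s-1)-\epsilon}$, which forces $|\mathbf{y}|<B^{\eta}$ with $\eta$ small, I instead fix $\mathbf{y}$ and count $\mathbf{x}$ on the hyperplane by the Lipschitz principle, again with Möbius inversion over the dilation $\mathbf{x}=d\mathbf{x}'$ for primitivity; because $\mathbf{y}$ is so small the box for $\mathbf{x}$ has side $\asymp B^{1/(s-1)}$, which exceeds every successive minimum of the relevant lattice, so the error term is harmless even for very skew $\mathbf{y}$, and the $\mathbf{x}$ with at most $4$ nonzero coordinates that appear here, which must be excluded, are removed by an inclusion–exclusion that contributes only $o(B\log B)$. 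Matching the thresholds correctly makes the two ranges cover $U(\mathbb{Q})$ without overlap, and in each the main term has the shape $(\text{bounded arithmetic factor})\cdot B/|\mathbf{x}|^{s}$, respectively $B/|\mathbf{y}|^{s}$.

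What remains is to sum over the outer variable and identify the constant. The summand is homogeneous of degree $-s$ in the outer vector times an arithmetic factor — a singular series, or a primitivity density — whose mean value exists, so a Dirichlet-series argument gives $c\,B\log B+O(B(\log B)^{1-\kappa})$, and the fact that $c$ must be independent of the cut-off $\epsilon$ is a built-in consistency check which also fixes the relation between the constants coming from the $\mathbf{x}$- and the $\mathbf{y}$-organisation. One then rewrites $c$, via the real density, the $p$-adic densities, and the combinatorial factor from the Möbius/dilation sums, as $\alpha(V_s)\,\beta(V_s)\,\tau_H(V_s)$ — using $\operatorname{Pic}(V_s)=\mathbb{Z}^2$, $-K_{V_s}=(s-1,s-2)$ and $H^1(\mathbb{Q},\operatorname{Pic}\overline{V_s})=0$, so that $\beta(V_s)=1$ and the power of $\log B$ equals $\operatorname{rk}\operatorname{Pic}(V_s)-1=1$ — to recognise it as $c_{\mathrm{Peyre}}$. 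The hard part will be the uniformity in the circle-method range: the error term for $Q_{\mathbf{x}}=0$ must stay under control after summation over all admissible $\mathbf{x}$, i.e. it must beat the conductor $\operatorname{rad}(x_1\cdots x_s)$ of $Q_{\mathbf{x}}$ uniformly, and it is precisely in order to have enough variables in the quadratic form — and hence enough power-saving over the main term — that $s\ge7$ is required; the delicate sub-case is that of $\mathbf{x}$ with only $5$ or $6$ nonzero coordinates, where the fibre quadric is of borderline rank.
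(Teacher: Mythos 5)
Your overall architecture --- fibring $V_s$ over the $\mathbf{x}$-space, counting $\mathbf{y}$ on the quadric $Q_{\mathbf{x}}=0$ by the circle method when $\mathbf{x}$ is small, counting $\mathbf{x}$ on a hyperplane by the geometry of numbers when $\mathbf{y}$ is small, M\"obius inversion for primitivity, and a final summation producing $B\log B$ with a threshold-independent constant matched to Peyre's --- is the same as the paper's. But there is a genuine gap in how you allocate the two ranges. You let the lattice-point count handle only $|\mathbf{y}|\le B^{\eta}$ with $\eta$ arbitrarily small, so the circle method must deliver an asymptotic for $Q_{\mathbf{x}}(\mathbf{y})=0$ in boxes of side $P=(B/|\mathbf{x}|^{s-1})^{1/(s-2)}$ uniformly for all $|\mathbf{x}|\le B^{1/(s-1)-\epsilon}$, i.e.\ for coefficients that are an arbitrarily large power of $P$ (when $|\mathbf{x}|$ is near $B^{1/(s-1)}$ one only has $P\approx B^{\eta'}$ with $\eta'$ small). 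No coefficient-uniform form of the circle method, classical or delta, can do this: the uniform Weyl/Hua and major-arc estimates lose positive powers of the coefficients (terms of the shape $P(|x_j|/P)^{1/2}$, factors $(q,x_j)^{1/2}$, a singular-series truncation at $q\le P|\mathbf{x}|$), and these swamp the fibre main term $\mathfrak{S}(\mathbf{x})\sigma_\infty(\mathbf{x})P^{s-2}\asymp P^{s-2}/|\mathbf{x}|$, which in the upper part of your range is even smaller than $1$, so no per-fibre asymptotic is possible there at all. This is exactly why the paper balances the split the other way: the geometry-of-numbers side is pushed to cover all $|\mathbf{y}|\le B^{1/(s+2)}$ (a genuine power of $B$, with the error controlled via successive minima together with a second-moment bound for the number of pairs $(\mathbf{x},\mathbf{y})$ from Browning--Hu), so that the circle method is only invoked for $|\mathbf{x}|\le B^{4/((s+2)(s-1))}\le P^{4/(s-1)}$, where the coefficient losses become negative powers of $P$; and even then the top of that range is not reachable by uniform Weyl--Hua alone when $s=7$, and the paper needs a separate minor-arc argument treating $\sum_i x_iy_i^2$ as a cubic form in $2s$ variables in the style of Heath-Brown's work on cubic forms. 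Your plan contains no substitute for either of these two ingredients, and acknowledging the uniformity issue as ``the hard part'' does not repair the fact that with your threshold it is not merely hard but impossible fibre by fibre.

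A smaller point: your choice of exceptional set keeps the fibres of rank $5$ and $6$ inside $U$, which creates the ``delicate sub-case'' you mention; but the estimates you would need (like the paper's) degenerate as $\Delta(\mathbf{x})=x_1\cdots x_s\to 0$, e.g.\ $\sigma_\infty(\mathbf{x})\ll|\Delta(\mathbf{x})|^{-1/2}$, and the theorem only asserts the existence of some open $U$, so it is simpler --- and what the paper effectively does --- to remove the whole divisor $x_1\cdots x_s=0$, whose contribution is of lower order, rather than only the rank-$\le 4$ locus.
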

Note that the leading constant arising in the theorem is as predicted by Peyre~\cite{Peyre} and that, since $\text{Pic}(V) \cong \mathbb{Z}^2$, the order of magnitude agrees with the prediction of Manin~\cite{FMT}.

In \cite{Schindler}, Schindler proves Manin's conjecture for any biprojective hypersurface of bidegree $(d_1, d_2)$ in $\mathbb{P}^{n_1-1} \times \mathbb{P}^{n_2-2}$ for $d_1, d_2 \geq 2$ and $n_1, n_2$ sufficiently large. Due to the assumption on the degrees her results do not cover the case in which we are interested. In general, the literature is lacking an equivalent result to Schindler's which will handle the bidegree $(1,d)$ setting for sufficiently many variables.
Diagonal equations have long been an important testing ground in the study of rational points and there has been a great deal of study of Manin's conjecture for diagonal hypersurfaces of the form 
\begin{equation}\label{d1d2}
x_1^{d_1}y_1^{d_2} + \ldots + x_s^{d_1}y_s^{d_2}=0.
\end{equation} 
If $d_1 = d_2 = 1$ then Manin's conjecture holds for $n \geq 2$ and this has been shown in multiple different ways: the work of Franke--Manin--Tschinkel~\cite{FMT} proves this using the theory of related Eisenstein series, Thunder~\cite{Thunder} provides a proof using the geometry of numbers and Robbiani~\cite{Rob} and Spencer~\cite{Spencer} prove Manin's conjecture via the circle method. In the case,  $d_1 =1, d_2 =2$ (the case in which the present paper is interested) upper and lower bounds of the correct order of magnitude where established for $s=3$ by Le Boudec~\cite{LB}. Recently, the $s=4$ case was studied by Browning--Heath-Brown~\cite{Browning2}. They showed that the expected asymptotic growth holds under the condition that one removes a thin subset. In particular, one needs to avoid counting those points $(\mathbf{x}, \mathbf{y})$ such that $x_1x_2x_3x_4 \in \mathbb{Q}^2$, which is an infinite union of closed subsets. This thin set phenomenon was in fact first observed, by Batyrev--Tschinkel~\cite{BT}, for varieties of the shape \eqref{d1d2} with $d_1 =1, d_2 = 3$. Our goal is to expand the literature by considering the $(1,2)$ setting for higher values of $s$. We note that in our result, one does not need to worry about the removal of a thin set, as a closed subset will suffice. However, it is certainly necessary to remove a closed subset as , for instance, the points such that $x_1 = 0$ and $y_2 = \ldots = y_s = 0$ will contribute $\gg B^s$ to the count.

Our approach is very similar to that of Browning--Heath-Brown. We proceed in two stages: for small $\mathbf{y}$ we may fix the $\mathbf{y}$ and consider the resulting linear equation in $\mathbf{x}$. We will study the number of solutions to this equation using techniques from the geometry of numbers. When $\mathbf{x}$ is small, we fix these variables and consider the resulting quadratic equation. This we can attack using the circle method. It is in this step that we diverge from the work of Browning--Heath-Brown. In \cite{Browning2}, the authors apply the $\delta$-method in the form introduced by Heath-Brown~\cite{HB} since they are dealing with a quadratic equation in 4 variables. For quadratic equations in at least 5 variables, the traditional form of the circle method is effective and so it is this form which we have chosen to use. One may consider the equation which we aim to solve as being very similar to studying Waring's problem, except with coefficients, a problem which is well understood classically (see \cite[Chapter 8]{Davenport}). However, we need all our results to be uniform in the coefficients as we need to sum over them after, and this addition to the classical problem is novel. As such we need to develop uniform versions of the key ingredients in the study of Waring's problem (such as Weyl's inequality, Hua's lemma, etc). Much of this uniform machinery could be used later to study varieties of a similar shape.

\textbf{Acknowledgements.} During the preparation of this paper the author is supported by NSF under Grant No. DMS-101601, while working at University of Michigan as part of the Math REU program. The author would like to thank Nicholas Rome and Will Dudarov for the numerous helpful comments and University of Michigan Math department for this opportunity. \\
\indent \textbf{Notation} Throughout this paper $\eta$ and $\eps$ will denote arbitrarily small constants, thus $c \eps$ will thus be rewritten as $\eps$ and similar cases hold for other expressions depending on $\eta$ and $\eps$. The constants $c_i$, $d$ and $\delta$ with $i \in \mathbb{N}$, whenever used, will denote certain small positive constants that's difficult to keep track of but easy to verify the value of. 

\section{Asymptotic formula using lattice counting}
This section is devoted to solving the equation when $|\vecy|$ is small. We follow closely with the work of Browning and Heath-Brown(See \cite[\S 3]{Browning2}) and arrive at various results using counting arguments from the geometry of numbers. \\
\indent Denote
\begin{align*}
    M_1(\vecy; R) = \#\{\vecx \in \mathbb{Z}^s: |\vecx| \leq R, \text{ }F(\vecx ;\vecy) = 0 \}
\end{align*}
for fixed $\vecy \in \zprim$. Define
\begin{align*}
    N_1(Y; B) = \# \Bigg\{ (\vecx,\vecy) &\in \mathbb{Z}^s \times \zprim: \begin{aligned}
        F&(\vecx;\vecy) = 0,\\|\vecx|^{s-1}|\vecy|^{s-2} &\leq B, Y \leq |\vecy| <  2Y
    \end{aligned} \Bigg\}.
\end{align*}
Then from our construction one sees that
\begin{align*}
    N_1(Y; B) = \sum_{\substack{\vecy \in \zprim\\ Y \leq |\vecy| < 2Y}}M_1 \big(\vecy;{(B/|y|^{s-2})}^{1/(s-1)} \big).
\end{align*}
The rest of this section is devoted to proving asymptotic formula for $N_1(Y;B)$; this is made explicit in the next theorem. 
\begin{thm}
Let $Y \geq 1$. Then
\begin{align}
\label{latres}
    N_1(Y;B) = \sum_{\substack{\vecy \in \zprim \\ Y \leq |\vecy| < 2Y}}\frac{\varrho_\infty(\vecy)}{|\vecy|^{s-2}}+ O(Y^s)+O(B^{(s-2)/(s-1)}Y^{(s+2)/(s-1)}),
\end{align}
where 
\begin{equation}
    \label{integral}
    \varrho_\infty(\vecy) = \int_{-\infty}^\infty \int_{[-1,1]^s}e(-\theta F(\vecx;\vecy))d\vecx d\theta   .
\end{equation}
\end{thm}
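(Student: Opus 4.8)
The plan is to fix $\vecy \in \zprim$ and count $\vecx \in \Z^s$ with $|\vecx| \leq R$ on the hyperplane $F(\vecx;\vecy) = \sum_i y_i^2 x_i = 0$, where $R = (B/|\vecy|^{s-2})^{1/(s-1)}$. Since the $y_i$ are coprime, $\gcd(y_1^2, \ldots, y_s^2) = 1$, so the integer solutions of this single linear equation form a lattice $\Lambda(\vecy) \subset \Z^s$ of rank $s-1$ and covolume exactly $|(y_1^2,\ldots,y_s^2)| = \sqrt{y_1^4 + \cdots + y_s^4} \asymp |\vecy|^2$ (a standard fact for the sublattice of $\Z^s$ cut out by a primitive linear form). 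So $M_1(\vecy;R)$ is the number of lattice points of $\Lambda(\vecy)$ in the box $[-R,R]^s$ intersected with the hyperplane, i.e.\ in an $(s-1)$-dimensional symmetric convex region of $(s-1)$-volume $\asymp R^{s-1}$.

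The main tool is the Lipschitz principle / counting lattice points in convex bodies: for a lattice $\Lambda$ of rank $n$ with successive minima $\lambda_1 \leq \cdots \leq \lambda_n$, the number of points in a nice region of diameter $\asymp R$ and volume $V$ is $V/\det\Lambda + O(1 + \sum_{j} R^{j}/(\lambda_1\cdots\lambda_j))$ or, in the cleaner form I would actually invoke, $\#(\Lambda \cap \mathcal{R}) = \mathrm{vol}(\mathcal{R})/\det\Lambda + O(R^{n-1}/(\lambda_1 \cdots \lambda_{n-1}))$ for a region comparable to a ball of radius $R$. The leading term $R^{s-1}\cdot(\text{vol of unit slice})/\det\Lambda(\vecy)$ will need to be identified with $\varrho_\infty(\vecy)/|\vecy|^{s-2}$ after rescaling: writing $\varrho_\infty(\vecy) = \int_{[-1,1]^s}\big(\int_\R e(-\theta F(\vecx;\vecy))\,d\theta\big)d\vecx$ and recognizing the inner integral as the (distributional) surface-measure density of the hyperplane $F = 0$, one gets $\varrho_\infty(\vecy) = \mathrm{vol}_{s-1}\{\vecx \in [-1,1]^s : F(\vecx;\vecy)=0\}/\|\nabla_\vecx F\| \cdot(\text{const})$, and scaling the box $[-1,1]^s$ up to $[-R,R]^s$ multiplies the singular integral contribution by $R^{s-1}$; matching this with $\mathrm{vol}(\mathcal R)/\det\Lambda(\vecy)$ and using $\det\Lambda(\vecy)\asymp|\vecy|^2$ together with $R^{s-1} = B/|\vecy|^{s-2}$ produces the claimed main term $\varrho_\infty(\vecy)/|\vecy|^{s-2}$ on the nose (the factor $|\vecy|^2$ from the covolume cancels against the Jacobian $\|\nabla_\vecx F\|\asymp|\vecy|^2$ in the singular-integral normalization, which is exactly why the constant comes out clean). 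So the real content is bookkeeping the two error terms.

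For the error term I would split the sum over $\vecy$ according to the size of the successive minima of $\Lambda(\vecy)$. The crude bound $\#(\Lambda(\vecy)\cap[-R,R]^s) \ll 1 + R^{s-1}/(\lambda_1\cdots\lambda_{s-1})$ is useless without control of the $\lambda_j$, so the trick (as in Browning--Heath-Brown) is: when $\lambda_1$ is large, say $\lambda_1 > R$, then $M_1(\vecy;R) = O(1)$ and summing $1$ over $|\vecy|\asymp Y$ gives the $O(Y^s)$ term; when $\lambda_1 \leq R$ one uses $\lambda_1 \cdots \lambda_{s-1} \asymp \det\Lambda(\vecy)/\lambda_s \gg \det\Lambda(\vecy)/(\text{something})$, or more efficiently the general estimate $\#(\Lambda\cap\mathcal R) - \mathrm{vol}(\mathcal R)/\det\Lambda \ll R^{s-2}/(\lambda_1\cdots\lambda_{s-2}) \ll R^{s-2}\lambda_{s-1}/\det\Lambda$, and then $\lambda_{s-1} \leq \lambda_s \ll |\vecy|^2 \ll Y^2$ (the largest minimum is controlled by the fact that the lattice sits in $\Z^s$ with covolume $\asymp Y^2$, so every $\lambda_j \ll Y^2$; actually a cleaner bound $\lambda_{s-1}\ll$ covolume works). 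Plugging in $\det\Lambda(\vecy)\asymp Y^2$ and $R^{s-2} \ll (B/Y^{s-2})^{(s-2)/(s-1)}$ and summing the error over the $\ll Y^s$ values of $\vecy$ gives
\[
\sum_{\vecy} \frac{R^{s-2}\cdot Y^2}{Y^2} \ll Y^s \cdot \left(\frac{B}{Y^{s-2}}\right)^{(s-2)/(s-1)} = B^{(s-2)/(s-1)} Y^{s - (s-2)^2/(s-1)} = B^{(s-2)/(s-1)}Y^{(s+2)/(s-1)},
\]
which is precisely the second error term (the exponent $s - (s-2)^2/(s-1) = (s(s-1)-(s-2)^2)/(s-1) = (s^2-s-s^2+4s-4)/(s-1) = (3s-4)/(s-1)$; hmm, this does not immediately match $(s+2)/(s-1)$, so I would recheck the exact power of $R$ retained in the error and very likely the correct error estimate is $O(\mathrm{vol}(\partial\mathcal R)/\text{(product of smallest minima)})$ with the boundary having $(s-2)$-measure $\ll R^{s-2}$, combined with a sharper use of $\lambda_1 \gg 1$ — the discrepancy is exactly the kind of exponent bookkeeping the introduction warns is "difficult to keep track of but easy to verify").

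The hard part, then, is not any single deep input — the lattice structure and the Lipschitz-type counting lemma are classical — but rather (i) getting the main term's constant to land exactly on $\varrho_\infty(\vecy)/|\vecy|^{s-2}$, which forces one to be careful about the interplay between the box rescaling, the covolume $\sqrt{\sum y_i^4}$, and the singular-integral normalization, and (ii) choosing the error-term split over $\vecy$ so that the contribution of small successive minima telescopes into exactly the stated $O(Y^s) + O(B^{(s-2)/(s-1)}Y^{(s+2)/(s-1)})$ rather than something lossier; in particular one must exploit that $\lambda_1(\Lambda(\vecy)) \gg 1$ always (integer lattice) and that the product of the top minima is $\gg \det\Lambda(\vecy)/\lambda_1^{s-1} \asymp Y^2$ up to the relevant range of $R$. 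I expect step (ii) to be where all the real care goes.
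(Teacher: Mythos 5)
Your reduction to counting lattice points of the rank $s-1$ lattice $\Lambda(\mathbf{y})=\{\mathbf{x}\in\mathbb{Z}^s: F(\mathbf{x};\mathbf{y})=0\}$ in a box of side $R=(B/|\mathbf{y}|^{s-2})^{1/(s-1)}$, and your identification of the main term with the singular integral (covolume $\asymp|\mathbf{y}|^2$ cancelling against $\|\nabla_{\mathbf{x}}F\|$), is the same route the paper takes, via Schmidt's lattice-point lemma. The problem is the error term, and the mismatch you noticed is not exponent bookkeeping but a missing idea. Bounding the discrepancy for each fixed $\mathbf{y}$ by $R^{s-2}/(\lambda_1\cdots\lambda_{s-2})\ll R^{s-2}$ (using only $\lambda_j\geq 1$, or your $\lambda_{s-1}\ll\det\Lambda$ variant) and then summing trivially over the $\asymp Y^s$ vectors $\mathbf{y}$ gives $B^{(s-2)/(s-1)}Y^{(3s-4)/(s-1)}$, which is weaker than the claimed $B^{(s-2)/(s-1)}Y^{(s+2)/(s-1)}$ by a factor $Y^{(2s-6)/(s-1)}$; no pointwise-in-$\mathbf{y}$ refinement will close this, because for individual $\mathbf{y}$ the shortest vector can genuinely be of bounded length. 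Moreover the loss is fatal for the intended application: in \S 4 the range is $Y\leq B^{1/(s+2)}$, and your exponent only yields an $O(B)$ total error for $Y\ll B^{1/(3s-4)}$, a strictly smaller range, so the two halves of the argument would no longer cover all solutions.

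What the paper does instead is keep the error in the sharper form $R^{s-2}/|\mathbf{x}_1|^{s-2}$, where $\mathbf{x}_1$ is a shortest nonzero vector of $\Lambda(\mathbf{y})$ (with $|\mathbf{x}_1|\ll|\mathbf{y}|^{2/(s-1)}$ by Minkowski), and then estimate the average of $|\mathbf{x}_1|^{-(s-2)}$ over $\mathbf{y}$ by \emph{swapping the order of summation}: for each fixed primitive $\mathbf{x}$ with $0<|\mathbf{x}|\ll Y^{2/(s-1)}$ one must count the $\mathbf{y}$ with $Y\leq|\mathbf{y}|<2Y$ lying on the quadric $x_1y_1^2+\cdots+x_sy_s^2=0$. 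The trivial bound $Y^{s-1}$ here is useless; the paper invokes the counting theorem of Browning--Hu for such hypersurfaces, giving $\ll Y^{s-2}+L^{3s/4}Y^{(s-2)/2}$ uniformly for $|\mathbf{x}|\sim L$, and a dyadic sum over $L\ll Y^{2/(s-1)}$ then gives $\sum_{\mathbf{y}}|\mathbf{x}_1|^{-(s-2)}\ll Y^{s-2+4/(s-1)}$, which is exactly what produces the exponent $(s+2)/(s-1)$. So the step you deferred is the real content of the theorem: it requires both the resummation over candidate short vectors and a nontrivial uniform point count on the dual quadric, neither of which is present in your sketch.
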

We note that if $\vecy \in \zprim$ has at least 2 nonzero components then
\begin{align*}
    \int_{[-1,1]^s}e(-\theta F(\vecx;\vecy))d\vecx = \prod_{j \leq s}\frac{\sin(2 \pi \theta y_j^2)}{\pi \theta y_j^2} \ll_\vecy (1+|\theta|)^{-2},
\end{align*}
and so the outer integral is absolutely convergent. If $\vecy = (1,0,...)$ then 
\begin{align*}
     \int_{[-1,1]^s}e(-\theta F(\vecx;\vecy))d\vecx = 2^{s-1}\frac{\sin(2\pi \theta)}{\pi \theta}
\end{align*}
and the outside integral is conditionally convergent with value $2^{s-1}$.\\
\indent We start by estimating $M_1(\vecy;R)$ for individual $\vecy \in \zprim$.
\begin{lem}
Let $\vecy \in \zprim $ and let $d(\vecy) = \sqrt{y_1^2+...+y_s^2}$. Let $V(\vecy)$ be the volume of the intersection of unit cube $[-1,1]^s$ and the hyperplane
\begin{align*}
    \{\vecx \in \mathbb{R}^s: F(\vecx;\vecy) = 0\}.
\end{align*}
Then there exists constant $c > 0$ and $\vecx_1 \in \zprim$ satisfying
\begin{align*}
    0 < |\vecx_1| < c|\vecy|^{2/(s-1)} \text{ and } F(\vecx_1; \vecy) = 0
\end{align*}
such that 
\begin{align*}
    M_1(\vecy;R) = \frac{V(\vecy)}{d(\vecy)}R^{s-1}+ O\bigg(\frac{R^{s-2}}{|\vecx_1|^{s-2}}\bigg)+O(1).
\end{align*}
\end{lem}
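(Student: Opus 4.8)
The plan is to view $M_1(\vecy;R)$ as the number of points of a fixed rank-$(s-1)$ lattice lying in a dilate of a fixed convex body, and to keep the error uniform in $\vecy$ by passing to the successive minima of that lattice, in the spirit of \cite[\S 3]{Browning2}. Set $\veca=\veca(\vecy)=(y_1^2,\dots,y_s^2)$; since $\vecy\in\zprim$ we have $\gcd(y_1^2,\dots,y_s^2)=1$, so $\vecx\mapsto\veca\cdot\vecx$ is a surjection $\Z^s\to\Z$ whose kernel
\[
\Lambda(\vecy)=\{\vecx\in\Z^s:F(\vecx;\vecy)=0\}
\]
is a saturated sublattice of $\Z^s$ of rank $s-1$ spanning the hyperplane $H(\vecy)=\{\vecx\in\R^s:F(\vecx;\vecy)=0\}$; in particular $\Z^s/\Lambda(\vecy)\cong\Z$ is torsion free. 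A short computation with a vector $\vecb$ satisfying $\veca\cdot\vecb=1$ (one exists because $\gcd(\veca)=1$) shows that the covolume $\det\Lambda(\vecy)$ of $\Lambda(\vecy)$ inside $H(\vecy)$ equals $d(\vecy)$. With this, $M_1(\vecy;R)=\#\bigl(\Lambda(\vecy)\cap R\,\mathcal B(\vecy)\bigr)$, where $\mathcal B(\vecy)=[-1,1]^s\cap H(\vecy)$ is a bounded convex polytope in $H(\vecy)$ with at most $2s$ facets, of $(s-1)$-dimensional volume $V(\vecy)$.

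Next I would fix a reduced basis $\vecu_1,\dots,\vecu_{s-1}$ of $\Lambda(\vecy)$, so that, writing $1\le\lambda_1\le\dots\le\lambda_{s-1}$ for the successive minima of $\Lambda(\vecy)$ relative to the sup-norm (note $\lambda_1\ge1$, as $\Lambda(\vecy)\subseteq\Z^s$), one has $|t_1\vecu_1+\dots+t_{s-1}\vecu_{s-1}|\asymp\max_i|t_i|\lambda_i$ with implied constants depending only on $s$. Under the substitution $\vecx=t_1\vecu_1+\dots+t_{s-1}\vecu_{s-1}$ the region $\{\vect\in\R^{s-1}:|\vecx|\le R\}$ is a convex polytope with at most $2s$ facets and diameter $\ll R/\lambda_1$ (because $\lambda_1\le\lambda_i$ for every $i$), hence its boundary is covered by at most $2s$ Lipschitz parametrisations of constant $\ll R/\lambda_1$, while the Jacobian of $\vect\mapsto\vecx$ is $\det\Lambda(\vecy)$. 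Davenport's form of the Lipschitz principle (compare \cite[\S 3]{Browning2}) then yields
\[
M_1(\vecy;R)=\frac{V(\vecy)}{d(\vecy)}\,R^{s-1}+O\!\Bigl(\frac{R^{s-2}}{\lambda_1^{s-2}}\Bigr)+O(1),
\]
the leading constant being $\mathrm{vol}_{s-1}(\mathcal B(\vecy))/\det\Lambda(\vecy)=V(\vecy)/d(\vecy)$.

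It remains to produce $\vecx_1$. Take it to be a nonzero vector of $\Lambda(\vecy)$ of minimal sup-norm, so that $|\vecx_1|=\lambda_1>0$ and $F(\vecx_1;\vecy)=0$; since $\Z^s/\Lambda(\vecy)$ is torsion free, $\vecx_1$ is primitive (were $\vecx_1=m\vecx_1'$ with $m\ge2$ and $\vecx_1'\in\Z^s$, then $\vecx_1'\in\Lambda(\vecy)$ would be a strictly shorter nonzero vector), whence $\vecx_1\in\zprim$. A pigeonhole argument in the boxes $[0,T]^s\cap\Z^s$ (the integer form $\veca\cdot\vecx$ has $O(T|\vecy|^2)$ possible values there, forcing a coincidence once $T\gg|\vecy|^{2/(s-1)}$, whose difference is a nonzero element of $\Lambda(\vecy)$ of sup-norm at most $T$) shows $\lambda_1=|\vecx_1|\ll|\vecy|^{2/(s-1)}$, i.e.\ $|\vecx_1|<c\,|\vecy|^{2/(s-1)}$ for a suitable $c>0$. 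Substituting $\lambda_1=|\vecx_1|$ into the error bound completes the proof.

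The crux is the error analysis rather than the main term: one needs the error in the lattice-point count to be uniform in $\vecy$, which is precisely why one cannot apply the Lipschitz principle directly to $R\,\mathcal B(\vecy)$ — whose facets become arbitrarily long and thin as $\vecy$ grows skew, so their Lipschitz constants are unbounded — and must instead pass to the successive minima of $\Lambda(\vecy)$ and exploit the diameter bound $\ll R/\lambda_1$ in the $\vect$-coordinates. The supporting facts (the covolume computation $\det\Lambda(\vecy)=d(\vecy)$, the quasi-orthogonality of a reduced basis, and the pigeonhole bound for $\lambda_1$) are standard; one should, however, treat the range $R<\lambda_1$ on its own, where $M_1(\vecy;R)=1$ while the displayed main term is itself $O(1)$ by Minkowski's first theorem.
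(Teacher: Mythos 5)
Your proposal is correct and follows essentially the same route as the paper: both treat $M_1(\vecy;R)$ as a count of points of the rank-$(s-1)$ lattice $\{\vecx\in\Z^s:F(\vecx;\vecy)=0\}$ in a dilated convex body, control the error through the first successive minimum $\lambda_1=|\vecx_1|$, and bound $\lambda_1\ll|\vecy|^{2/(s-1)}$ from the determinant $\asymp|\vecy|^{2}$; the only differences are cosmetic, namely that you prove the Schmidt-type counting estimate directly (quasi-orthogonal reduced basis plus Davenport's Lipschitz principle in the sup-norm, which is exactly what the paper's appeal to Schmidt's Lemma 2 together with Appendix A achieves) and you obtain the bound on $\lambda_1$ by pigeonhole instead of Minkowski's second theorem, while your handling of primitivity of $\vecx_1$ and of the range $R<\lambda_1$ is sound. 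One shared slip worth noting: the covolume of this lattice is $\sqrt{y_1^4+\cdots+y_s^4}$ rather than $\sqrt{y_1^2+\cdots+y_s^2}$ as the stated definition of $d(\vecy)$ would suggest (the paper's later normalization $\|\vecw\|_2=1$ shows the former is intended), so your asserted identity ``covolume $=d(\vecy)$'' should be read with that corrected $d(\vecy)$; this does not affect the exponent $2/(s-1)$ or the structure of either argument.
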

\begin{proof}
We first note that $M_1(\vecy; R)$ counts the number of vectors $\vecx \in \mathbb{Z}^s$ in an $s-1$ dimensional lattice of determinant $d(\vecy)$(c.f. \cite[Lemma 1(i)]{Schmidt}). \cite[Lemma 2]{Schmidt} then implies
\begin{align*}
    M_1(\vecy;R) = \frac{V(\vecy)}{d(\vecy)}R^{s-1} + O\bigg(\sum_{i=1}^{s-2}\frac{R^{i}}{|\vecx_1|...|\vecx_i|}\bigg)+O(1).
\end{align*}
after replacing the $L^2$ norm with $L^\infty$ norm; this replacement of norm is addressed in Appendix A. In particular, in Schmidt's notation $\vecx_i$ corresponds to the $i$-th successive minima. It follows that $R^{i}/(|\vecx_1|...|\vecx_i|) \ll R^i/|\vecx_1|^i \ll \min(1, R^{s-2}/|\vecx_1|^{s-2})$ for all $i \leq s-2$. Finally we see that 
\begin{align*}
    |\vecx_1| \leq (|\vecx_1|..|\vecx_{s-1}|)^{1/(s-1)} \ll d(\vecy)^{2/(s-1)} \ll |\vecy|^{2/(s-1)}.
\end{align*}
Lemma 2.2 hence follows. 
\end{proof}
An immediate consequence of Lemma 2.2 is that 
\begin{equation}
\label{partialres}
     N_1(Y;B) = B \sum_{\substack{\vecy \in \zprim\\ Y \leq |\vecy| < 2Y}}\frac{V(\vecy)}{|\vecy|^{s-2}d(\vecy)}+O(B^{(s-2)/(s-1)}|\vecy|^{-{(s-2)}^2/(s-1)} \Sigma_1)+O(Y^s),
\end{equation}
where 
\begin{align*}
    \Sigma_1 = \sum_{\substack{\vecy \in \zprim\\ Y \leq |\vecy| <  2Y}}\frac{1}{|\vecx_1|^{s-2}}.
\end{align*}
Consider the following sum
\begin{align*}
    E(Y) =  \sum_{\substack{\vecx \in \zprim\\0 < |\vecx| \leq cY^{2/(s-1)}}}\sum_{\substack{\vecy \in \zprim\\ Y \leq |\vecy| < 2Y\\F(\vecx;\vecy) = 0 }} \frac{1}{|\vecx|^{s-2}}.
\end{align*}
Clearly $\Sigma_1 \leq E(Y)$. We shall prove an upper bound for $E(Y)$, which shows that $\Sigma_1$ makes a satisfactory contribution to the error term appeared in Theorem 2.1.
\begin{lem} $E(Y) \ll Y^{4/(s-1)+s-2}$ for all $Y \geq 1$.
\end{lem}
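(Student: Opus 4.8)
The plan is to estimate $E(Y)$ by opening the inner sum over $\vecy$. Writing $N(\vecx;Y)=\#\{\vecy\in\zprim:\ Y\le|\vecy|<2Y,\ F(\vecx;\vecy)=0\}$, one has $E(Y)=\sum_{0<|\vecx|\le cY^{2/(s-1)}}|\vecx|^{-(s-2)}N(\vecx;Y)$. The weight is harmless: since $\#\{\vecx\in\Z^s:|\vecx|=k\}\ll k^{s-1}$, a dyadic decomposition gives $\sum_{0<|\vecx|\le Z}|\vecx|^{-(s-2)}\ll\sum_{k\le Z}k\ll Z^{2}$, and with $Z\asymp Y^{2/(s-1)}$ this is precisely the factor $Y^{4/(s-1)}$ appearing in the target exponent. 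So, apart from a sparse set of degenerate $\vecx$ treated separately, it suffices to prove the pointwise bound $N(\vecx;Y)\ll Y^{\,s-2+\eps}$ for every $\vecx$ in the relevant range; multiplying by the weight bound then yields $E(Y)\ll Y^{4/(s-1)}\cdot Y^{\,s-2+\eps}$, which is the assertion (the $\eps$ being absorbable, or removable with slightly more care in the conic count below).

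For the pointwise bound I would fibre over the coordinates of $\vecy$. Let $k=k(\vecx)$ denote the number of non-zero coordinates of $\vecx$. If $k\ge 3$, fix the $s-k$ coordinates $y_j$ with $x_j=0$ together with all but two of the coordinates $y_j$ with $x_j\ne0$; there are $\ll Y^{s-2}$ such choices, and each reduces $F(\vecx;\vecy)=0$ to a binary equation $x_a u^{2}+x_b v^{2}=m$ in the two surviving variables, where $m$ is the value contributed by the fixed coordinates. For $m\ne0$ such an equation has $\ll Y^{\eps}$ solutions with $|u|,|v|\le 2Y$ (divisor-type bounds in the definite case, the theory of Pell's equation in the indefinite case), while the exceptional choices of the fixed coordinates producing $m=0$ number only $\ll Y^{\,s-3+\eps}$ and each contributes $\ll Y$ further solutions. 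Hence $N(\vecx;Y)\ll Y^{\,s-2+\eps}$ whenever $k(\vecx)\ge3$, and together with the weight bound this settles all such $\vecx$.

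The delicate point, and where I expect the real work to lie, is the low-support tail $k(\vecx)\le 2$. If $k(\vecx)=1$ then $\vecx=\pm e_i$ by primitivity, and $F(\vecx;\vecy)=y_i^{2}$ vanishes exactly when $y_i=0$, so $N(\vecx;Y)\gg Y^{\,s-1}$; if $k(\vecx)=2$ with the two non-zero entries of opposite sign and $|x_i||x_j|$ a perfect square, then $F(\vecx;\vecy)=0$ confines $(y_i,y_j)$ to a single rational line and still leaves $\gg Y^{\,s-1}/\max(|y_i|,|y_j|)$ solutions. For these $\vecx$ the naive bound $Y^{s-2}$ genuinely fails, so the stated estimate can only hold once the $\vecy$-summation underlying $E(Y)$ — inherited from the count in Theorem~2.1 and ultimately from Theorem~1.1 — has been restricted away from the coordinate hyperplanes $\{y_i=0\}$ and the diagonals $\{y_i=\pm y_j\}$; this is a proper closed subset, consistent with the remarks of the introduction. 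With that restriction in force, $\vecx=\pm e_i$ and the opposite-sign $k=2$ vectors all force an excluded $\vecy$, only $k(\vecx)\ge3$ survives, and the argument above closes; the remaining effort is to confirm that the excised locus is exactly what is required and that no extra loss enters the exponent beyond $Y^{4/(s-1)+s-2}$.
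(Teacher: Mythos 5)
Your handling of the non-degenerate vectors (at least three nonzero coordinates of $\vecx$) is correct, and it is a genuinely different route from the paper's: the paper's proof is a two-line argument that splits dyadically into $|\vecx|\sim L$ and invokes the uniform counting bound of Browning--Hu (their Theorem 2.1), asserting that each fixed primitive $\vecx$ with $|\vecx|\sim L$ admits $\ll Y^{s-2}+L^{3s/4}Y^{(s-2)/2}$ admissible $\vecy$, after which $\sum_L L^{-(s-2)}\cdot L^{s}\cdot Y^{s-2}\ll Y^{4/(s-1)}Y^{s-2}$ — exactly the $Z^2$ weight-saving you compute. Your conic-fibration argument replaces that citation by an elementary divisor/Pell count, which is fine for $k(\vecx)\ge 3$; the one caveat is the $Y^{\eps}$ loss, which is \emph{not} absorbable downstream: the error term $B^{(s-2)/(s-1)}Y^{(s+2)/(s-1)}$ in Theorem 2.1 is exactly $O(B)$ at the endpoint $Y=B^{1/(s+2)}$, so an extra $Y^{\eps}$ would give $B^{1+\eps/(s+2)}$ and break the final $B\log B$ asymptotic; you would indeed have to remove it, e.g.\ by averaging over the fixed coordinates or by quoting a uniform quadric bound as the paper does.

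The "delicate point" you isolate is a genuine obstruction rather than something you failed to close: the paper applies the cited fiber bound to \emph{every} primitive $\vecx$ in the range, including $\vecx$ with a single nonzero entry $\pm1$ and $\vecx=(1,-1,0,\dots,0)$, for which the fiber contains $\asymp Y^{s-1}$ primitive $\vecy$ (those with $y_i=0$, resp.\ $y_i=\pm y_j$), so with the paper's definition one has $E(Y)\gg Y^{s-1}$, exceeding the claimed $Y^{4/(s-1)+s-2}$ for $s\ge 6$; the same vectors show $\Sigma_1\gg Y^{s-1}$, since for such $\vecy$ the first minimum satisfies $|\vecx_1|\asymp 1$. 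So you have not proved the lemma as stated, but no proof of the literal statement exists, and the paper's own argument silently skips precisely the case you identified (the Browning--Hu bound requires a nondegenerate, i.e.\ full-rank, quadric $\sum_j x_jy_j^2$, which fails when some $x_j=0$). Two cautions about your proposed repair, though. First, excising only the hyperplanes $\{y_i=0\}$ and the diagonals $\{y_i=\pm y_j\}$ is insufficient: $\vecx=(4,-1,0,\dots,0)$ paired with $\vecy$ on $2y_1=\pm y_2$ still contributes $\gg_s Y^{s-1}$, and more generally any pair of coordinates of $\vecy$ whose reduced ratio has height $\ll Y^{1/(s-1)}$ yields a $2$-supported lattice vector of sup-norm $\ll Y^{2/(s-1)}$, so the bad locus grows with $Y$ and is not a fixed closed subset on the $\vecy$-side. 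Second, the natural fix therefore lives one step earlier: either restrict to $\vecx$ with $\Delta(\vecx)\neq 0$ (your $k\ge3$ argument then covers everything) and account for the excluded pairs directly in $N_1(Y;B)$, or refine the passage from Lemma 2.2 to $E(Y)$ so that $\vecy$ with an abnormally short $\vecx_1$ are treated using the full product of successive minima rather than $|\vecx_1|^{s-2}$ alone, in the spirit of Browning--Heath-Brown.
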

\begin{proof}
We divide the first sum in accordance with the ranges of $|\vecx|$ into dyadic intervals, and note that the constant $c$ in this case can be ignored for our error term.
It's then immediate that 
\begin{align*}
    E(Y) \ll \sum_{L}L^{-s+2} \#\bigg\{(\vecx,\vecy) \in \zprim \times \zprim: \begin{aligned}
         F&(\vecx;\vecy) = 0\\
         L/2 \leq |\vecx| < &L, \text{ } Y \leq |\vecy| < 2Y
    \end{aligned} \bigg\},
\end{align*}
where summation is over $L \ll Y^{2/(s-1)}$ with L being powers of $2$ only. From \cite[Theorem 2.1]{Browning1} we obtain that
\begin{align*}
    E(Y) &\ll \sum_{L}L^{-s+2} \sum_{|\vecx| \sim L/2}\big( Y^{s-2}+L^{3s/4}Y^{(s-2)/2} \big)    \\
    &\ll \sum_{L}L^{2} Y^{s-2} \ll Y^{4/(s-1)+s-2}.
\end{align*}
This proves Lemma 2.3. 
\end{proof}
In particular we see that the error term from \eqref{partialres} is satisfactory for Theorem 2.1 as a consequence of Lemma 2.3.

It remains to show that
\begin{equation}
\label{rhodef}
    \frac{V(\vecy)}{d(\vecy)} = \varrho_\infty(\vecy).
\end{equation}

Let $\vecw = (w_1,...,w_s)$ where $w_i = y_i^2 d(\vecy)$ for $ 1 \leq i \leq s$, then one sees that $\|\vecw\|_2=1$ and
\begin{align*}
    \varrho_\infty(\vecy){d(\vecy)} = \int_{-\infty}^\infty \int_{[-1,1]^s}e(-\theta \vecw \cdot \vecx )d\vecx d\theta.
\end{align*}
In the case where only one of the components $y_i$ of $\vecy$ is nonzero, we see from remark at the beginning of this section that
\begin{align*}
    \varrho_\infty(\vecy){d(\vecy)} = \int_{-\infty}^\infty 2^{s-1} \frac{sin(2 \pi \theta)}{\pi \theta}d\theta
\end{align*}
converges conditionally to $2^{s-1}$, which suffices for Theorem 2.1 since one easily sees that $V(\vecy) = 2^{s-1}$ in this case.
In the case where $\vecy$ has more than 1 nonzero components we can rewrite the integral, again by remark at the beginning of this section, as
\begin{align*}
    \lim_{\delta \downarrow 0} \int_{-\infty}^\infty \Big( \frac{sin(\pi \delta \theta)}{\pi \delta \theta} \Big)^2 \int_{[-1,1]^s}e(-\theta \vecw \cdot \vecx )d\vecx d\theta\\
    = \lim_{\delta \downarrow 0} \int_{-\infty}^\infty  \int_{[-1,1]^s}\Big( \frac{sin(\pi \delta \theta)}{\pi \delta \theta} \Big)^2e(-\theta \vecw \cdot \vecx )d\vecx d\theta.
\end{align*}
Using results from Fourier analysis(c.f. \cite[Lemma 20.1]{Davenport}), one set
\begin{align*}
    K(u;\delta)  = \int_{-\infty}^\infty  \Big( \frac{sin(\pi \delta \theta)}{\pi \delta \theta} \Big)^2e(-\theta u ) d\theta,
\end{align*}
where
\begin{align*}
    K(u;\delta) = \begin{cases}
    \delta^{-2}(\delta-|u|), & \text{if } |u| \leq \delta,\\
    0,& \text{if } |u| \geq \delta.
    \end{cases}
\end{align*}
We then see that 
\begin{align*}
     \varrho_\infty(\vecy){d(\vecy)} = \lim_{\delta \downarrow 0}  \int_{[-1,1]^s}K(\vecw \cdot \vecx;\delta) d\vecx.
\end{align*}
Since $\|\vecw\|_2 = 1$, there exists $M \in O_s(\mathbb{R})$ such that $M \vecw = (1,0,...)$, then $\vecw \cdot \vecx = M\vecw \cdot M\vecx$. Hence, applying change of variables $\vecz = M\vecx$ and integrate over $Z=M[-1,1]^s$, one has
\begin{align*}
    \varrho_\infty(\vecy){d(\vecy)} = \lim_{\delta \downarrow 0} \int_Z K(z_1;\delta) d\vecz = meas\{\vecz \in Z; z_1 = 0\}=V(M \vecw) = V(\vecw) = V(\vecy).
\end{align*}
This proves the claim and Theorem 2.1 follows as a consequence. 

\section{Counting points using circle method}
In this section we seek to prove an asymptotic formula for the counting function
\begin{align*}
    M_3(\vecx;P) = \# \{\vecy \in \mathbb{Z}^s:|\vecy| \leq P, F(\vecx;\vecy) = 0\} 
\end{align*}
 for small $|\vecx|$. We will use the classical Hardy-Littlewood cirle method (c.f.  \cite{Davenport}). This contrasts the new $\delta$-method developed in \cite{HB} and used in \cite{Browning2}, which we have been following very closely to throughout this paper. (A reason for this comes from the fact that the classical Hardy-Littlewood circle method is applicable to a quadratic equation for $s \geq 5$, whereas Heath-Brown and Browning study the case when $s = 4$ in \cite{Browning2}).\\

In correspondence with the classical Hardy-Littlewood circle method, we make similar definitions for major and minor arcs.
Let
\begin{align*}
    \mathfrak{M}_{a,q} = \left\{\alpha \in [0,1]: \left|\alpha - \frac{a}{q}\right| < (2q|\vecx|)^{-1}P^{-1-\eta} \right\}
\end{align*}
for $a \leq q$, $(a,q) =1$, $1 \leq q \leq P|\vecx|$ and $\eta$ arbitrarily small. Define major arcs
\begin{align*}
\mathfrak{M} &= \bigcup_{q \leq P|\vecx|}\bigcup_{\substack{a=1 \\ (a,q)=1}}^q \mathfrak{M}_{a,q}
\end{align*}
and minor arcs
\begin{align*}
  \mathfrak{m} &= [0,1]\backslash \mathfrak{M}.
\end{align*}
Finally notice that
\begin{align*}
    M_3(\vecx;P) = \Bigg( \int_\mathfrak{M}+\int_\mathfrak{m} \Bigg)\Bigg(T_1(\alpha)...T_s(\alpha) d\alpha\Bigg),
\end{align*}
where
\begin{align*}
    T_j(\alpha) = \sum_{|y_j| \leq P\text{, } y_j \in \mathbb{Z}} e(\alpha x_j y_j^2).
\end{align*}

Our strategy will be as follows: along the major arcs we will be using the classical Hardy-Littlewood circle method to produce the truncated singular integral and singular series. This provides an expression immediately applicable in \S 4. For the integral along the minor arcs we will prove an upper bound, using Weyl's inequality and Hua's lemma, which should suffice for small $|\vecx|$. The case when $|\vecx|$ is large will be dealt with in \S 4. \\
\indent Our result is the following theorem. 
\begin{thm}
For $s \geq 7$ we have
\begin{equation}
    \begin{aligned}
          M_3(\vecx;P) = P^{s-2}\sigma_\infty(\vecx) \mathfrak{S}(\vecx;P)+O(T) + O(E_2)+O(E_3),
    \end{aligned}
\end{equation}
where 
\begin{align*}
    &T= \int_\mathfrak{m} T_1(\alpha)...T_s(\alpha)d\alpha \ll  E_1(\vecx;P),\\
    &\sigma_\infty(\vecx) = \int_{-\infty}^\infty \int_{[-1,1]^s} e(\theta F(\vecx;\vecy))d\vecy d\theta,\\
    &\mathfrak{S}(\vecx;P) = \sum_{q \leq P|\vecx|} q^{-s} S_q(\vecx)\text{, }S_q(\vecx) = \sum_{\substack{a \text{ mod } q\\(a,q) = 1}}\sum_{\vecb \text{ mod } q}e_q(aF(\vecx;\vecb))
\end{align*}
and 
\begin{align*}
    &E_1(\vecx;P) = P^{s/2+\eps} |\Delta(\vecx)|^{(s-4)/2s}, \\ 
    &E_2(\vecx;P) = P^{s-2-\eta}|\vecx|^{-1}{\sum_{q \leq P|\vecx|}} q^{-s/2+1} \prod_{j \leq s}(q,x_j)^{1/2}, \\
    &E_3(\vecx;P) = P^{s/2-1+c\eta} |\Delta(\vecx)|^{-1/2} |\vecx|^{s/2-1}{\sum_{q \leq P|\vecx|}} \prod_{j \leq s}(q,x_j)^{1/2}
\end{align*}
for small enough constant $c$ depending only on s, where we denote 
\begin{align*}
    \Delta(\vecx) = \prod_{j \leq s}x_j.
\end{align*}
\end{thm}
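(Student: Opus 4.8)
The plan is to apply the classical Hardy--Littlewood circle method to the exponential sum $\prod_{j\le s}T_j(\alpha)$, treating the coefficients $x_j$ as fixed but keeping careful track of all dependence on $|\vecx|$ and on $\Delta(\vecx)$ so that the error terms can be summed over $\vecx$ in \S4. First I would dispose of the minor arcs: on $\mathfrak{m}$ Dirichlet's theorem gives a rational approximation $|\alpha - a/q| < (2q|\vecx|)^{-1}P^{-1-\eta}$ with $q > P|\vecx|$, and I would use a coefficient-uniform Weyl inequality for each $T_j(\alpha) = \sum_{|y_j|\le P} e(\alpha x_j y_j^2)$ together with a Hua-type mean value estimate for $\int_0^1 |T_j(\alpha)|^{2k}\,d\alpha$ to bound $T = \int_\mathfrak{m} T_1\cdots T_s\,d\alpha \ll E_1(\vecx;P) = P^{s/2+\eps}|\Delta(\vecx)|^{(s-4)/2s}$. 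The cleanest route is to combine one or two Weyl factors — each contributing a saving governed by $q$ and by $(q,x_j)$, hence ultimately by $|\Delta(\vecx)|$ — with $|T_j(\alpha)| \le \#\{|y_j|\le P\} \ll P$ trivially, or with an $L^4$ bound on the remaining factors; the exponent $s/2+\eps$ is exactly what $s-4$ genuinely quadratic variables plus four variables estimated trivially would produce, which is why the condition $s\ge 7$ (so that $s/2 < s-2$, leaving room against the main term $P^{s-2}$) appears.

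Next I would treat the major arcs. On $\mathfrak{M}_{a,q}$ I write $\alpha = a/q + \beta$ with $|\beta| < (2q|\vecx|)^{-1}P^{-1-\eta}$ and perform the standard splitting $T_j(\alpha) = q^{-1}S_q^{(j)}(a,x_j)\, I_j(\beta) + (\text{error})$, where $S_q^{(j)}(a,x_j) = \sum_{b\bmod q} e_q(a x_j b^2)$ is a Gauss sum and $I_j(\beta) = \int_{-P}^{P} e(\beta x_j t^2)\,dt$. Multiplying over $j$ turns the product of local sums into $S_q(\vecx)$ and the product of integrals into an integral that, after extending $\beta$ to all of $\mathbb{R}$, yields $P^{s-2}\sigma_\infty(\vecx)$ up to a tail; the sum over $q\le P|\vecx|$ and $a$ assembles $\mathfrak{S}(\vecx;P)$. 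The Gauss sum bound $|S_q^{(j)}(a,x_j)| \ll q^{1/2}(q,x_j)^{1/2}$ is what feeds the factors $\prod_{j\le s}(q,x_j)^{1/2}$ into $E_2$ and $E_3$: the error from replacing $T_j$ by its main term, summed over the major arcs, contributes $E_2(\vecx;P) = P^{s-2-\eta}|\vecx|^{-1}\sum_{q\le P|\vecx|} q^{-s/2+1}\prod_{j\le s}(q,x_j)^{1/2}$, coming from one ``bad'' factor where the pointwise error $\ll q^{1/2}\prod(q,x_j)^{1/2}$ times the major-arc measure $\ll (q|\vecx|)^{-1}P^{-1-\eta}$ is combined with $s-1$ main terms of size $\ll P/\sqrt q$, while $E_3$ arises from the analogous estimate in the regime where the integral $I_j(\beta)$ is itself small, i.e. when $|\beta|$ is near the edge of the arc so that the $\beta$-integral is controlled by $\prod_j |x_j|^{-1/2} = |\Delta(\vecx)|^{-1/2}$ after rescaling — this explains the shape $E_3(\vecx;P) = P^{s/2-1+c\eta}|\Delta(\vecx)|^{-1/2}|\vecx|^{s/2-1}\sum_{q\le P|\vecx|}\prod_{j\le s}(q,x_j)^{1/2}$.

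I expect the main obstacle to be obtaining the Weyl and Hua estimates with the \emph{uniform} dependence on the coefficients $x_j$ that the statement demands, rather than with implied constants depending on $\vecx$: in the classical treatment of Waring's problem one is allowed constants depending on the (fixed) coefficients, but here every such constant must be made explicit as a power of $(q,x_j)$ or of $|\Delta(\vecx)|$, since we ultimately sum over $\vecx$. Concretely, the difficulty is that the rational-approximation step in Weyl's inequality, applied to $T_j(\alpha)$ with $\alpha$ close to $a/q$, really detects an approximation to $x_j\alpha$, so one must track how $(q,x_j)$ controls the loss when passing between an approximation to $\alpha$ and one to $x_j\alpha$; handling the greatest common divisors cleanly and showing the resulting divisor sums over $q$ converge (which needs $s$ large, again $s\ge 7$) is the technical heart. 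A secondary but routine obstacle is controlling the tails when $\beta$ is extended to $\mathbb{R}$ in the singular integral and when $q$ is extended beyond $P|\vecx|$, and verifying the convergence of $\sigma_\infty(\vecx)$ and of $\mathfrak{S}(\vecx;P)$ uniformly — these follow from the $O((1+|\theta|)^{-2})$-type decay already noted for such oscillatory integrals once $\vecx$ has at least two nonzero entries, together with the Gauss-sum bounds, so I would relegate them to the same uniform-machinery lemmas (uniform Weyl inequality, uniform Hua's lemma) advertised in the introduction.
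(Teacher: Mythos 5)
Your plan follows essentially the same route as the paper: minor arcs handled by Dirichlet approximation plus a coefficient-uniform Weyl inequality and Hua's lemma combined through H\"older (giving $E_1$), and major arcs handled by the splitting $T_j(\alpha)=q^{-1}S_{x_ja,q}I_j(\beta)+O(q^{1/2}(q,x_j)^{1/2})$ with the Gauss-sum bound feeding $E_2$ and the completion of the $\beta$-integral to $\sigma_\infty(\vecx)$ feeding $E_3$, exactly as in the paper's Lemmas 3.2--3.8. The only slips are in asides (the $P^{s/2}$ exponent comes from Weyl on $s-4$ factors and Hua's $L^4$ bound on four factors, not from four trivially estimated variables, and $s\ge 7$ is not forced by $s/2<s-2$), not in the method itself.
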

We begin by evaluating the integral along the minor arcs. 
\begin{lem}(Uniform Weyl Estimate)
Suppose that there exists natural numbers $a,q$ such that 
\begin{align*}
    (a,q)=1, q>0,\Big|\alpha-\frac{a}{q}\Big|\leq 
\frac{1}{q^2}.
\end{align*}
Then for any $\eps >0$, we have 
\begin{equation}
\label{weyl}
       \bigg|\sum_{y_j=-P}^Pe(\alpha x_j y_j^2)\bigg| \ll P^{1/2}+P^{1+\eps}\Bigg(\bigg(\frac{P}{|x_j|}\bigg)^{-{1/2}}+\bigg(\frac{q}{|x_j|}\bigg)^{-{1/2}}+\bigg(\frac{P^2}{q}\bigg)^{-1/2}\Bigg).
\end{equation}
\end{lem}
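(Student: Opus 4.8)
This is a coefficient‑uniform quadratic Weyl estimate, and I would prove it by carrying the coefficient $x_j$ through the classical differencing argument. Set $S:=\sum_{|y_j|\le P}e(\alpha x_jy_j^2)$. Since $|S|\le 2P+1$ and $|S|$ is unchanged in modulus under $x_j\mapsto-x_j$, and since each bracketed term on the right, multiplied by $P^{1+\eps}$, already exceeds $2P+1$ in an obvious range — $|x_j|\ge P$ forces $P^{1+\eps}(P/|x_j|)^{-1/2}\ge P^{1+\eps}$, $q\ge P^2/2$ forces $P^{1+\eps}(P^2/q)^{-1/2}\gg P^{1+\eps}$, and $q\le 4|x_j|$ forces $P^{1+\eps}(q/|x_j|)^{-1/2}\gg P^{1+\eps}$ — we may assume
\[
1\le|x_j|\le P,\qquad 4|x_j|<q\le P^2/2 .
\]
Weyl differencing (squaring $S$ and writing $y=y'+h$) gives
\[
|S|^2\ll P+\Sigma,\qquad \Sigma:=\sum_{1\le h\le 2P}\min\bigl(P,\ \|2\alpha x_jh\|^{-1}\bigr),
\]
so everything comes down to a coefficient‑uniform bound for the linear sum $\Sigma$, after which taking square roots turns $|S|^2\ll P^\eps R$ into $|S|\ll P^\eps R^{1/2}$.

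To handle $\Sigma$, write $\alpha=a/q+\theta$ with $|\theta|\le q^{-2}$; then $2\alpha x_j=b/q^\ast+\mu$, where $b/q^\ast$ is $2ax_j/q$ in lowest terms — so $q^\ast=q/\gcd(2x_j,q)$ lies in $[\,q/(2|x_j|),\,q\,]$ — and $\mu=2x_j\theta$ satisfies $|\mu|\le 2|x_j|/q^2$. Along the progression $h=1,\dots,2P$ the main part $(b/q^\ast)h$ runs through the $q^\ast$‑periodic set of points on the $1/q^\ast$‑grid, while the drift $\mu h$ is slowly varying. I would therefore split $[1,2P]$ into consecutive blocks of length $L:=\min\bigl(q^\ast,\ \lfloor(2|\mu|q^\ast)^{-1}\rfloor\bigr)$, noting that $(2|\mu|q^\ast)^{-1}\ge q/(4|x_j|)>1$ by the reductions above, so $L\ge1$. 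On each block $\mu h$ varies by at most $|\mu|L\le(2q^\ast)^{-1}$, i.e.\ by less than the grid spacing, so the usual estimate for sums of $\min(P,\|\cdot\|^{-1})$ along an arithmetic progression through a $1/q^\ast$‑grid gives a contribution $\ll(P+q^\ast)P^\eps$ per block. Summing over the $\ll 2P/L+1$ blocks yields $\Sigma\ll(2P/L+1)(P+q^\ast)P^\eps$.

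It remains to expand this according to the value of $L$ — this bookkeeping is the main (though routine) work. If $L=q^\ast$ (roughly the case $\gcd(2x_j,q)\gtrsim\sqrt{|x_j|}$), then $\Sigma\ll(P^2/q^\ast+P+q^\ast)P^\eps$, and since $q/(2|x_j|)\le q^\ast\le q$ this gives $|S|\ll P^\eps\bigl(P(|x_j|/q)^{1/2}+P^{1/2}+q^{1/2}\bigr)$, i.e.\ the terms $P^{1+\eps}(q/|x_j|)^{-1/2}$, $P^{1/2}$ and $P^{1+\eps}(P^2/q)^{-1/2}$. If $L=\lfloor(2|\mu|q^\ast)^{-1}\rfloor<q^\ast$, then $2P/L\ll P|\mu|q^\ast\le 2P|x_j|/q$, so using $q^\ast\le q$ one gets $\Sigma\ll(P^2|x_j|/q+P|x_j|+q)P^\eps$ and hence $|S|\ll P^\eps\bigl(P(|x_j|/q)^{1/2}+(P|x_j|)^{1/2}+q^{1/2}\bigr)$, which is $\ll P^{1/2}+P^{1+\eps}\bigl((q/|x_j|)^{-1/2}+(P/|x_j|)^{-1/2}+(P^2/q)^{-1/2}\bigr)$. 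In either case the bound of the lemma follows. The only point demanding genuine care is the per‑block estimate in the perturbed regime — controlling how the drift $\mu h$ interacts with the $1/q^\ast$‑grid when $\gcd(2x_j,q)$ is small — together with checking that the three reductions at the outset are legitimate; everything else is a transcription of the textbook quadratic Weyl inequality with the coefficient $x_j$ tracked through (cf.\ \cite{Davenport}).
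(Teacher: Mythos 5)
Your proof is correct and follows essentially the same route as the paper: the paper simply invokes the classical Weyl differencing argument of \cite{Davenport}, noting that the number of blocks in the bound for $\sum_{h\le 2P}\min(P,\|2\alpha x_jh\|^{-1})$ becomes $P|x_j|/q+1$ instead of $P/q+1$, which is precisely the content of your estimate for $\Sigma$. Your treatment via the reduced denominator $q^{\ast}=q/\gcd(2x_j,q)$ and the drift $\mu$, together with the preliminary reductions, is just a self-contained and more careful write-up of that same step, landing on the identical bound.
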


\begin{proof}
This follows straight from the classical Weyl estimate(c.f. \cite[Lemma 3.1]{Davenport}). The only difference is the replacement of the number of blocks $\frac{P}{q}+1$ with $\frac{P|x_j|}{q}+1$.
\end{proof}

\begin{lem}
We have 
\begin{align*}
    \int_0^1 |T_j(\alpha)|^{4} d\alpha \ll P^{2+\eps}.
\end{align*}
where $T_j(\alpha)$ is as previously defined. 
\end{lem}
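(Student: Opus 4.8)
This is the classical Hua-type bound for a quadratic Weyl sum, here with an extra coefficient $x_j$ in the exponential, and the plan is to adapt the standard argument (as in \cite[Lemma 3.1]{Davenport} or the treatment of Hua's lemma in \cite[Chapter 2]{Davenport}) while tracking the dependence on $x_j$ carefully enough to see it does not hurt us. First I would expand the fourth power as a sum over quadruples: $\int_0^1 |T_j(\alpha)|^4 d\alpha$ counts the number of solutions of $x_j(y_1^2 - y_2^2 + y_3^2 - y_4^2) = 0$ with each $|y_i| \leq P$, which since $x_j \neq 0$ (we are in the regime where $\Delta(\vecx) \neq 0$, the relevant range for the circle method here) is exactly the number of solutions of $y_1^2 - y_2^2 = y_4^2 - y_3^2$. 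Thus the coefficient $x_j$ drops out entirely and one is left with a coefficient-free count, which is the cleanest route.

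Next I would bound that count in the standard way. Writing $m = y_1^2 - y_2^2$, the number of solutions is $\sum_{|m| \leq 2P^2} r(m)^2$ where $r(m) = \#\{(y,y') : |y|,|y'| \leq P,\ y^2 - y'^2 = m\}$. For $m \neq 0$, each representation $y^2 - y'^2 = m$ corresponds to a factorization $m = (y-y')(y+y')$, so $r(m) \ll d(|m|) \ll P^\eps$ uniformly for $m$ in this range; summing $r(m)^2 \ll P^{2\eps}$ over the $\ll P^2$ nonzero values of $m$ gives $\ll P^{2+\eps}$. The $m = 0$ term contributes solutions with $y_1^2 = y_2^2$ and $y_3^2 = y_4^2$, i.e. $\ll P^2$ of them, which is absorbed. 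Hence $\int_0^1 |T_j(\alpha)|^4 d\alpha \ll P^{2+\eps}$, as claimed. (Alternatively, one can cite the classical Hua's lemma $\int_0^1 |\sum_{n \leq P} e(\alpha n^2)|^4 d\alpha \ll P^{2+\eps}$ directly after the change of variables $y_i \mapsto y_i$ and the observation that the coefficient is immaterial, but spelling out the divisor bound makes the uniformity transparent.)

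The only point that needs a word of care — and the one I would flag as the mild obstacle — is the case $x_j = 0$, in which $T_j(\alpha) = \sum_{|y_j|\leq P} 1 \ll P$ is constant in $\alpha$, giving $\int_0^1 |T_j(\alpha)|^4 d\alpha \ll P^4$, which violates the stated bound. In practice this does not arise: Lemma~3.3 is applied only along the minor arcs in the proof of Theorem~3.1, where $\Delta(\vecx) = \prod_{j} x_j \neq 0$, so every $x_j$ is nonzero. I would therefore state the lemma under the standing assumption $\Delta(\vecx)\neq 0$ (or simply $x_j \neq 0$), which is consistent with its only use, and then the argument above goes through verbatim with no dependence on the size of $x_j$ at all.
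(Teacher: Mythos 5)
Your proof is correct and matches the paper's route: the paper simply cites Hua's lemma, and your orthogonality-plus-divisor-bound argument is precisely the standard proof of that lemma for fourth powers of the quadratic Weyl sum, together with the correct observation that a nonzero integer coefficient $x_j$ drops out of the underlying count (equivalently, substitute $\beta = x_j\alpha$ and use periodicity). Your caveat that the stated bound fails when $x_j = 0$ is a legitimate remark about the statement, but as you note that case never arises where the lemma is applied, since only $\vecx$ with $\Delta(\vecx) \neq 0$ are treated there.
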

\begin{proof}
    This is an immediate consequence of Hua's lemma. (c.f. \cite[Lemma 3.2]{Davenport} and \cite[Page 40]{Davenport}).
\end{proof}
One thus comes to an estimate for the integral over the minor arc. 
\begin{lem}
For $s \geq 7 $ we have
\begin{align*}
    T = \left| \int_\mathfrak{m} T_1(\alpha)T_2(\alpha)...T_s(\alpha)d\alpha \right| \ll E_1(\vecx;P),
\end{align*}
where $T_j(\alpha)$ is defined as before.  
\end{lem}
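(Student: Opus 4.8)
The plan is to bound $T$ by pulling out one factor $T_j(\alpha)$ in sup-norm and applying Hölder to the remaining $s-1$ factors. Since $s \geq 7$, we can afford to take four of the remaining factors in $L^4$ via Lemma 3.3 (Hua's lemma) and bound the rest trivially by $2P+1 \ll P$. Concretely, choose the index $j_0$ realising $\max_j |x_{j_0}|$; then $|\Delta(\vecx)| = \prod_j |x_j| \leq |x_{j_0}|^s$, so $|x_{j_0}| \geq |\Delta(\vecx)|^{1/s}$, and $|T_{j_0}(\alpha)|$ on the minor arc will be small in a way that is controlled by this lower bound on $|x_{j_0}|$.

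First I would record the minor-arc pointwise bound. By Dirichlet's approximation theorem, every $\alpha \in [0,1]$ has an approximation $|\alpha - a/q| \leq 1/(q Q)$ with $1 \leq q \leq Q$; taking $Q = P|\vecx|$ and using that $\alpha \in \mathfrak{m}$ means the approximation cannot come from $\mathfrak{M}$, one deduces $P^{1+\eta} \ll q \ll P|\vecx|$ (the standard transference: if $q$ were too small the arc $\mathfrak{M}_{a,q}$ would contain $\alpha$). Feeding this into Lemma 3.2 (Uniform Weyl), the three terms in \eqref{weyl} become, up to $P^\eps$, of size $P \cdot ((P/|x_{j_0}|)^{-1/2} + (q/|x_{j_0}|)^{-1/2} + (P^2/q)^{-1/2})$; with $q$ in the range $[P^{1+\eta}, P|\vecx|]$ and $|x_{j_0}| \geq |\Delta(\vecx)|^{1/s}$ one checks each term is $\ll P^{1+\eps} (|x_{j_0}|/P)^{1/2} \ll P^{1/2+\eps} |\Delta(\vecx)|^{1/(2s)} \cdot$ (a harmless power), so that
\[
\sup_{\alpha \in \mathfrak{m}} |T_{j_0}(\alpha)| \ll P^{1/2+\eps} |\Delta(\vecx)|^{1/(2s)}.
\]
(The precise bookkeeping of which of the three terms dominates, and extracting exactly the exponent $(s-4)/(2s)$ rather than something weaker, is where care is needed.)

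Next I would combine: writing $T \leq \sup_{\mathfrak m}|T_{j_0}(\alpha)| \cdot \int_0^1 \prod_{j \neq j_0} |T_j(\alpha)|\, d\alpha$, and bounding the integral by $P^{s-5}$ times $\int_0^1 |T_{j_1}T_{j_2}T_{j_3}T_{j_4}|\,d\alpha$ for four distinct indices $\neq j_0$, Hölder gives $\int_0^1 |T_{j_1}T_{j_2}T_{j_3}T_{j_4}|\,d\alpha \leq \prod_{i=1}^4 \left(\int_0^1 |T_{j_i}(\alpha)|^4 d\alpha\right)^{1/4} \ll P^{2+\eps}$ by Lemma 3.3. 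Hence $\int_0^1 \prod_{j\neq j_0}|T_j(\alpha)|\,d\alpha \ll P^{s-3+\eps}$, and therefore
\[
T \ll P^{1/2+\eps} |\Delta(\vecx)|^{1/(2s)} \cdot P^{s-3+\eps} = P^{s-5/2+\eps}|\Delta(\vecx)|^{1/(2s)},
\]
which I would then reconcile with the stated $E_1(\vecx;P) = P^{s/2+\eps}|\Delta(\vecx)|^{(s-4)/(2s)}$: the discrepancy in the $P$-exponent is absorbed because on the minor arcs one does not trivially bound $s-5$ factors by $P$ each but rather saves on \emph{every} factor $T_j$ via the Weyl bound with the $|x_j|$-dependence, i.e. one applies the pointwise estimate to all $s-4$ "extra" factors simultaneously and only keeps four factors for the $L^4$-integral. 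Redoing the count this way turns each of the $s-4$ trivially-bounded factors from size $P$ into size $P^{1/2+\eps}|x_j|^{1/2}$-type, and multiplying these produces the $|\Delta(\vecx)|$ to the full power $(s-4)/(2s)$ after using $\prod_{j} |x_j| = |\Delta(\vecx)|$ together with $q \gg P^{1+\eta}$; the powers of $P$ then collapse to $P^{s/2+\eps}$.

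The main obstacle is the second reconciliation step: getting the exponent of $|\Delta(\vecx)|$ to come out as exactly $(s-4)/(2s)$ requires applying the uniform Weyl bound to $s-4$ of the exponential sums at once and carefully tracking which of the three terms in \eqref{weyl} dominates for a given $\alpha \in \mathfrak m$ — in particular handling the regime where $q$ is close to its upper limit $P|\vecx|$ separately from the regime where $q \approx P^{1+\eta}$, and verifying that in both regimes the product $\prod_{j \in S} |T_j(\alpha)|$ over the $s-4$ chosen indices $S$ is $\ll P^{s-4+\eps} (|\Delta(\vecx)|/|x_{j_1}x_{j_2}x_{j_3}x_{j_4}|)^{1/(2s)}$ with the four excluded indices chosen to minimise $|x_{j_1}x_{j_2}x_{j_3}x_{j_4}|$. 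Everything else (Dirichlet, Hölder, Hua) is routine given the lemmas already proved.
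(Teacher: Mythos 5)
Your ingredients (Dirichlet approximation, the uniform Weyl bound, Hua's lemma, H\"older) are the same ones the paper uses, but the argument as written does not reach $E_1$. Your first computation gives $P^{s-5/2+\eps}|\Delta(\vecx)|^{1/(2s)}$, which is not $\ll P^{s/2+\eps}|\Delta(\vecx)|^{(s-4)/(2s)}$ for $s\geq 7$, and even that intermediate bound is obtained by using an inequality in the wrong direction: $|x_{j_0}|\geq |\Delta(\vecx)|^{1/s}$ is a lower bound, while the Weyl estimate for $T_{j_0}$ grows with $|x_{j_0}|$, so you cannot replace $|x_{j_0}|^{1/2}$ by $|\Delta(\vecx)|^{1/(2s)}$ in an upper bound. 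Your location of the minor-arc denominators is also off: since the major arcs have $q\leq P|\vecx|$ and width $(2q|\vecx|)^{-1}P^{-1-\eta}$, Dirichlet with parameter $2|\vecx|P^{1+\eta}$ forces $P|\vecx| < q \leq 2P^{1+\eta}|\vecx|$ on $\mathfrak{m}$, not $P^{1+\eta}\ll q\ll P|\vecx|$; it is precisely this range that makes $P/|x_j|$, $q/|x_j|$, $P^2/q \gg P^{1-\eta}/|\vecx|$ and hence $\sup_{\mathfrak{m}}|T_j(\alpha)| \ll P^{1/2+\eps}|x_j|^{1/2}$ after adjusting $\eta$ and $\eps$.

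The "reconciliation" you defer is exactly the content of the lemma, and your sketch of it would fail as stated: the claimed bound $\prod_{j\in S}|T_j(\alpha)| \ll P^{s-4+\eps}\bigl(|\Delta(\vecx)|/|x_{j_1}x_{j_2}x_{j_3}x_{j_4}|\bigr)^{1/(2s)}$ carries the wrong exponents (it should be $P^{(s-4)/2+\eps}\prod_{j\in S}|x_j|^{1/2}$, which after multiplying by Hua's $P^{2+\eps}$ gives the right power of $P$), and choosing the four excluded indices to minimise $|x_{j_1}x_{j_2}x_{j_3}x_{j_4}|$ is backwards: to guarantee $\prod_{j\in S}|x_j| \leq |\Delta(\vecx)|^{(s-4)/s}$ you must exclude the four largest coordinates, since the geometric mean of the $s-4$ smallest $|x_j|$ is at most the geometric mean of all of them. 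The paper sidesteps the index selection entirely by applying H\"older symmetrically, $\bigl|\int_{\mathfrak{m}}\prod_j T_j\bigr| \leq \prod_j \bigl(\int_{\mathfrak{m}}|T_j|^s\bigr)^{1/s}$, and then for every $j$ writing $\int_{\mathfrak{m}}|T_j|^s \leq (\sup_{\mathfrak{m}}|T_j|)^{s-4}\int_0^1|T_j|^4 \ll (P^{1/2+\eps}|x_j|^{1/2})^{s-4}P^{2+\eps}$; taking the product over $j$ yields $P^{s/2+\eps}|\Delta(\vecx)|^{(s-4)/(2s)}$ in one step. If you repair your version by excluding the four largest coordinates (or simply adopt the symmetric splitting), the argument goes through; as written it does not prove the stated bound.
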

\begin{proof}
By H\" older's inequality
\begin{align*}
    \left| \int_\mathfrak{m} T_1(\alpha)T_2(\alpha)...T_s(\alpha) d\alpha \right| \ll \left(\int_\mathfrak{m} |T_1(\alpha)|^s d\alpha\right)^{1/s}...\left(\int_\mathfrak{m} |T_s(\alpha)|^s d\alpha\right)^{1/s},
\end{align*}
so we simply need to evaluate one of the integrals on the right-hand side. By Dirichlet approximation theorem, for every $\alpha \in [0,1]$ one can find a pair $(a,q) = 1$ with $a\leq q$ satisfying, for arbitrarily small $\eta$, that
\begin{equation}
\label{condition}
    1 \leq q \leq 2P^{1+\eta}|\vecx| \textit{, } \left|\alpha - \frac{a}{q}\right| \leq (2q|\vecx|)^{-1}P^{-1-\eta}.
\end{equation}
One sees immediately that the latter bound is exactly the one used for the major arcs. In particular if $\alpha \in \mathfrak{m}$, we must have $q \geq P|\vecx|$. Note that by our construction we have $P/|\vecx|,q/|\vecx|,P^2/q \gg P^{1-\eta}/|\vecx|$. Hence, after possibly refixing $\eta$ and $\eps$, Lemma 3.3 implies
\begin{align*}
    |T_j(\alpha)| \ll P^{s/2+\varepsilon}|x_j|^{1/2}.
\end{align*}
From Lemma 3.5 we have
\begin{align*}
    \bigg(\int_\mathfrak{m} |T_j(\alpha)|^s d\alpha\bigg)^{1/s} &\leq \max |T_j(\alpha)|^{(s-4)/s} \bigg(\int_\mathfrak{m} |T_1(\alpha)|^4 d\alpha\bigg)^{1/s}\\
    &\ll \Big(p^{(s-4)/2+\varepsilon} |x_j|^{1/2}\Big)^{1/s}.
\end{align*}
Combining using H\"older's inequality, we obtain the desired result, namely
\begin{align*}
    \int_\mathfrak{m} |T_1(\alpha)T_2(\alpha)...T_s(\alpha)|d\alpha \ll P^{s/2+\varepsilon} |\Delta(\vecx)|^{(s-4)/2s}.
\end{align*}
\end{proof}

This provides an estimate for the integral along the minor arc for small $|\vecx|$. As indicated in the beginning of this section, we will treat the integral along the minor arcs more carefully when dealing with large $|\vecx|$; see Lemma 4.2 and its proof at the end of \S 4. \\
\indent We are now ready to evaluate the integral of the exponential sums along the major arcs. This will follow from the next few lemmas. In particular, we recall a classical estimate of quadratic Gauss sums. 
\begin{lem}
For any natural numbers $a,q$ such that $(a,q) = 1$, define
\begin{align*}
    S_{x_ja,q} = \sum_{b_j \text{ mod } q} e_q(ax_jb_j^2).
\end{align*}
Then
\begin{align*}
    |S_{x_ja,q}| = q^{1/2} (q,|x_j|)^{1/2}. 
\end{align*} 
\end{lem}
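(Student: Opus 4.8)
The statement to prove is that for $(a,q)=1$ the complete Gauss sum $S_{x_ja,q}=\sum_{b_j\bmod q}e_q(ax_jb_j^2)$ has absolute value exactly $q^{1/2}(q,|x_j|)^{1/2}$. The plan is to reduce to the primitive case and then invoke the classical evaluation of Gauss sums. First I would write $d=(q,|x_j|)$, so that $x_j=dx_j'$ and $q=dq'$ with $(x_j',q')=1$. Since $e_q(ax_jb_j^2)=e_{q'}(ax_j'b_j^2)$ depends on $b_j$ only modulo $q'$, each residue class modulo $q'$ is hit exactly $d$ times as $b_j$ runs modulo $q$, giving $S_{x_ja,q}=d\sum_{b_j\bmod q'}e_{q'}(ax_j'b_j^2)=d\,S_{x_j'a,q'}$.

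Now $(ax_j',q')=1$, so $S_{x_j'a,q'}$ is a genuine quadratic Gauss sum with a coefficient coprime to the modulus. The key classical fact (Gauss; see e.g. \cite[\S2]{Davenport} or any standard reference) is that such a sum has absolute value $\sqrt{q'}$: one proves $|S|^2=q'$ by expanding $|S|^2=\sum_{b,c}e_{q'}(ax_j'(b^2-c^2))$, substituting $b=c+t$, and summing the resulting geometric series in $c$, which vanishes unless $q'\mid 2ax_j't$; for odd $q'$ this forces $t\equiv 0$, leaving exactly $q'$ terms, and the even case is handled by the standard separate argument. Hence $|S_{x_j'a,q'}|=\sqrt{q'}$. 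Combining, $|S_{x_ja,q}|=d\sqrt{q'}=d\sqrt{q/d}=\sqrt{dq}=q^{1/2}(q,|x_j|)^{1/2}$, as claimed.

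The only genuine content here is the classical modulus computation for primitive quadratic Gauss sums, which is entirely standard; the main (very mild) obstacle is bookkeeping the reduction $S_{x_ja,q}=d\,S_{x_j'a,q'}$ correctly, in particular checking that the exponential really does descend to the modulus $q'$ and that the multiplicity is exactly $d$. Since the paper only needs the absolute value (not the exact argument of the sum), I would not track the sign/phase, so no Jacobi-symbol or quadratic-reciprocity input is required, and the proof is short.
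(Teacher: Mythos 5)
Your reduction to the primitive case is sound: with $d=(q,|x_j|)$ one indeed has $(x_j/d,\,q/d)=1$, the exponential $e_q(ax_jb_j^2)$ descends to the modulus $q'=q/d$, and each residue class mod $q'$ is hit exactly $d$ times, so $S_{x_ja,q}=d\,S_{x_j'a,q'}$. This is essentially the computation hidden behind the paper's one-line proof, which simply cites \cite[Lemma 4.8]{Iwaniek} and remarks that the bound can also be deduced from the uniform Weyl estimate (Lemma 3.2 with $P=q$, $\alpha=a/q$); so your route is a self-contained version of the same standard argument rather than a genuinely different method.

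However, your final step is not correct as stated. For a coefficient $c$ coprime to the modulus, the complete quadratic Gauss sum $\sum_{b \bmod q'} e_{q'}(cb^2)$ has modulus $\sqrt{q'}$ only when $q'$ is odd; it vanishes when $q'\equiv 2 \pmod 4$ and has modulus $\sqrt{2q'}$ when $4\mid q'$. Hence the asserted equality $|S_{x_ja,q}|=q^{1/2}(q,|x_j|)^{1/2}$ fails for even $q'$ (for instance $q=2$, $x_j=a=1$ gives $S_{x_ja,q}=0$, not $\sqrt{2}$), and the phrase ``the even case is handled by the standard separate argument'' cannot rescue an exact equality, because that standard argument yields a different answer. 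To be fair, this is also a defect of the lemma as stated in the paper: what is actually used downstream, e.g.\ in \eqref{singularseriesupbound} and in the proof of Lemma 3.7, is only the upper bound $|S_{x_ja,q}|\ll q^{1/2}(q,x_j)^{1/2}$, which your argument does deliver (with constant $\sqrt{2}$) once the even case is quoted with its correct value. A fully correct write-up should therefore either restrict to odd $q$ or replace the equality by the inequality $|S_{x_ja,q}|\le \sqrt{2}\,q^{1/2}(q,|x_j|)^{1/2}$, proved exactly along the lines you describe.
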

\begin{proof}
    c.f. \cite[Lemma 4.8]{Iwaniek}. One can also deduce this from Lemma 3.2, setting $P = q$ and $\alpha = a/q$.
\end{proof}
\begin{lem}
For $\alpha$ in $\mathfrak{M}_{a,q}$, let $\alpha = \beta + a/q$, then we have
\begin{align*}
    T_j(\alpha) &= q^{-1}S_{x_ja,q}I_j(\beta)+O(q^{1/2}(q,x_j)^{1/2}),
\end{align*}
where
\begin{align*}
   I_j(\beta) = \int_{-P}^P e(x_j\beta u^2) du.
\end{align*}
\end{lem}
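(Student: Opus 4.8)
The plan is to run the classical major-arc calculation for a quadratic exponential sum, carrying the coefficient $x_j$ throughout and tracking its common factor with $q$. First I would decompose the summation variable into residue classes modulo $q$: writing $y_j=qv+b$ with $b$ over a complete residue system mod $q$ and $v\in\Z$ subject to $|qv+b|\le P$, the identity $\alpha=a/q+\beta$ together with $y_j\equiv b\ (\mathrm{mod}\ q)$ gives $e(\alpha x_j y_j^2)=e_q(ax_jb^2)\,e(\beta x_j(qv+b)^2)$, the first factor being independent of $v$. Hence
\begin{align*}
    T_j(\alpha)=\sum_{b\bmod q}e_q(ax_jb^2)\sum_{\substack{v\in\Z\\ |qv+b|\le P}}e\!\big(\beta x_j(qv+b)^2\big).
\end{align*}
The substitution $u=qv+b$ shows that, for each fixed $b$, the quantity $q^{-1}I_j(\beta)$ equals $\int e(\beta x_j(qv+b)^2)\,dv$ taken over exactly the $v$-interval occurring above (the endpoints match precisely, with no discrepancy). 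So once every inner sum is compared with this integral, summing against the weights $e_q(ax_jb^2)$ reproduces exactly the claimed main term $q^{-1}S_{x_ja,q}I_j(\beta)$, with $S_{x_ja,q}=\sum_{b\bmod q}e_q(ax_jb^2)$ the Gauss sum of the previous lemma.

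For the comparison of each inner sum with its integral I would use partial summation (Euler–Maclaurin). With $g(v)=e(\beta x_j(qv+b)^2)$ one has $|g'(v)|\le 4\pi|\beta x_j|\,qP$ on the range $|qv+b|\le P$, an interval of length $2P/q$, so the total variation of $g$ there is $\ll|\beta x_j|P^2$ and the sum differs from the integral by $\ll 1+|\beta x_j|P^2$. The crucial input from the shape of the major arcs is that on $\mathfrak{M}_{a,q}$ we have $|\beta|<(2q|\vecx|)^{-1}P^{-1-\eta}$ and $|x_j|\le|\vecx|$, whence $|\beta x_j|P^2\ll q^{-1}P^{1-\eta}$ and, more to the point, $|\beta x_j|P\ll q^{-1}P^{-\eta}$ is genuinely small; this smallness is what makes the oscillatory integrals in the error behave.

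The main obstacle is pushing the error all the way down to $O(q^{1/2}(q,x_j)^{1/2})$: bounding each inner-sum discrepancy by $O(1+|\beta x_j|P^2)$ and summing trivially over the $q$ residues $b$ only yields $O(q+P^{1-\eta})$, which is far too lossy. To recover the stated bound one has to exploit the cancellation among the Gauss sums hidden in the error, and I would do this by applying Poisson summation to the inner progression-sum, which turns
\begin{align*}
    T_j(\alpha)-q^{-1}S_{x_ja,q}I_j(\beta)=\frac1q\sum_{m\ne 0}G(m)\,J(m),
\end{align*}
where $G(m)=\sum_{b\bmod q}e_q(ax_jb^2+mb)$ is a complete quadratic Gauss sum — bounded by $q^{1/2}(q,x_j)^{1/2}$ after completing the square, and supported on the progression $(q,2x_j)\mid m$, exactly as in the previous lemma — and $J(m)=\int_{-P}^{P}e(\beta x_jw^2-mw/q)\,dw$ is a one-dimensional oscillatory integral. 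Since $|\beta x_j|P\ll q^{-1}P^{-\eta}\ll 1$, the phase of $J(m)$ has no stationary point for $m\ne 0$, so $|J'(w)|\gg|m|/q$ and repeated integration by parts makes $J(m)$ small in $|m|$; feeding the bound and support of $G(m)$ into the resulting $m$-sum produces the claimed $O(q^{1/2}(q,x_j)^{1/2})$, the quadratic case being clean so that no $\eps$-loss appears. Equivalently, this is precisely the classical major-arc estimate for $\sum e(\alpha w^2)$ (cf.\ \cite{Davenport}), adapted so that the coefficient $x_j$ is carried through the Gauss sum; the only genuinely new bookkeeping is the appearance of $(q,x_j)$, which enters exactly via the bound $|S_{x_ja,q}|=q^{1/2}(q,|x_j|)^{1/2}$ of the previous lemma.
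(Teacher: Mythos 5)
Your opening move is exactly the paper's: split $y_j$ into residue classes modulo $q$, pull out the factor $e_q(ax_jb^2)$, and compare each inner sum with $q^{-1}I_j(\beta)$. Where you genuinely diverge is the error analysis. The paper disposes of it in one line: each inner sum is replaced by its integral via van der Corput (citing \cite[Lemma 9.1]{Davenport}), using the smallness of the phase derivative $|x_j\beta q(bq+r)|\le 1/2$ on $\mathfrak{M}_{a,q}$, and the stated error $O(q^{1/2}(q,x_j)^{1/2})$ is then attributed to the Gauss--sum bound of Lemma 3.6. You correctly identify that this naive combination only yields $O(q)$ (the $O(1)$ discrepancies depend on the residue class and cannot be summed against the Gauss sum with cancellation), and your Poisson-summation refinement --- complete twisted Gauss sums $G(m)$, bounded by $q^{1/2}(q,x_j)^{1/2}$ and supported on a progression, paired with non-stationary oscillatory integrals $J(m)$ --- is the standard Vaughan-type route that actually produces a $q^{1/2}$-quality error term, so your argument is more robust than (and in effect repairs) the paper's terse justification; the paper buys brevity, you buy an error term that genuinely exhibits the claimed square-root cancellation. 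Two caveats you should attend to: the first-derivative bound $J(m)\ll q/|m|$ alone leaves a divergent harmonic series over $m$, so you must integrate by parts a second time (using $|g''|=2|\beta x_j|\ll q^{-1}P^{-1-\eta}$) and extract cancellation in $m$ from the boundary terms, or smooth the cutoff --- as usually executed this costs a factor $q^{\eps}$ or $\log q$, so the completely loss-free bound you assert needs the explicit quadratic Gauss/theta evaluation rather than generic stationary-phase bounds; and the evaluation and support of $G(m)$ carries the usual $2$-adic complications when $q$ is even, which your phrase ``supported on $(q,2x_j)\mid m$'' glosses over. Neither caveat affects the downstream use of the lemma, where a harmless $q^{\eps}$ would be absorbed.
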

\begin{proof}
Note that one has 
\begin{align*}
    T_j(\alpha) &= \sum_{y=-P}^P e(\alpha x_j y^2)\\
    &=\sum_{r \text{ mod } q}e_q(a x_j r^2)\sum_{b}e(x_j\beta(bq+r)^2),
\end{align*}
where the second sum ranges over $b$ as $bq+r$ runs over $\mathbb{N} \cap [-P,P]$. In particular, we seek to replace the second sum by an integral as indicated, and then reevaluate the error terms. This follows immediately, or after taking complex conjugates, from van der Corput(c.f. \cite[Lemma 9.1]{Davenport}); here by our setup we have $0 < |x_j|\beta q (bq+r) \leq 1/2$ and note that the first sum is bounded by $q^{1/2}(x_j,q)^{1/2}$ from Lemma 3.6.
\end{proof}

Lemma 3.6 is crucial in providing estimate for the integral over the major arc.
\begin{lem}
For $s \geq 7$, we have
\begin{equation}
    \label{exp2}
      \int_\mathfrak{M} T_1(\alpha)...T_s(\alpha)\; d\alpha = P^{s-2}\sigma_\infty(\vecx) \mathfrak{S}(\vecx;P)+O(E_2) + O(E_3),
\end{equation}
where
\begin{align*}
       \sigma_\infty(\vecx) &= \int_{-\infty}^\infty \int_{[-1,1]^s} e(\theta F(\vecx;\vecy)) d\vecy d\theta, \\ 
       \mathfrak{S}(\vecx;P) &= \sum_{q \leq P|\vecx|} q^{-s} S_q(\vecx) \text{, } S_q(\vecx) = \sum_{\substack{a \text{ mod } q \\ (a,q)=1}}\sum_{\vecb \text{ mod } q} e_q(aF(\vecx;\vecb))
\end{align*}
and where we remind the readers from Theorem 3.1 that
\begin{align*}
    E_2 &= P^{s-2-\eta}|\vecx|^{-1}{\sum_{q \leq P|\vecx|}} q^{-s/2+1} \prod_{j \leq s}(q,x_j)^{1/2}, \\
    E_3 &= P^{s/2-1+c\eta} |\Delta(\vecx)|^{-1/2} |\vecx|^{s/2-1}{\sum_{q \leq P|\vecx|}}\prod_{j \leq s}(q,x_j)^{1/2}
\end{align*}
for some constant $c$ not depending on $\eta$.
\end{lem}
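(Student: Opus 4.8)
The plan is to carry out the standard major-arc analysis of the classical circle method, but tracking all dependence on $\vecx$ explicitly since $\vecx$ must later be summed over. First I would substitute the approximation from Lemma 3.8 for each $T_j(\alpha)$ on a single arc $\mathfrak{M}_{a,q}$, writing $T_j(\alpha) = q^{-1}S_{x_j a, q}I_j(\beta) + O(q^{1/2}(q,x_j)^{1/2})$ with $\alpha = a/q + \beta$. Expanding the product $T_1(\alpha)\cdots T_s(\alpha)$, the main term is $q^{-s}\prod_j S_{x_j a,q} \cdot \prod_j I_j(\beta)$, and by Lemma 3.6 each error factor is $\ll q^{1/2}(q,x_j)^{1/2}$ while each main factor $q^{-1}S_{x_ja,q}I_j(\beta)$ is $\ll q^{-1/2}(q,x_j)^{1/2} P$ trivially (since $|I_j(\beta)| \le 2P$). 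Thus replacing one genuine factor $T_j$ by its error term costs a factor of roughly $q/(P)$ relative to keeping it as a main term, and the total contribution of all the mixed/error terms, summed over $a \bmod q$ (which is $\le q$ values) and over $q \le P|\vecx|$, will be bounded by the two expressions $E_2$ and $E_3$ after one splits according to whether one balances against the trivial bound $|I_j(\beta)|\ll P$ or against the decay bound $|I_j(\beta)| \ll |x_j\beta|^{-1/2}$; I expect $E_2$ to arise from the former regime and $E_3$ (which carries the $|\Delta(\vecx)|^{-1/2}$ from integrating $\prod |x_j\beta|^{-1/2}$ over $\beta$) from the latter. The power $P^{c\eta}$ in $E_3$ absorbs the loss from the arc lengths $|\beta| < (2q|\vecx|)^{-1}P^{-1-\eta}$ being slightly larger than in the classical setup.

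Next I would assemble the main term. Summing $q^{-s}\sum_{(a,q)=1}\prod_j S_{x_j a, q} = q^{-s} S_q(\vecx)$ over $q \le P|\vecx|$ produces the truncated singular series $\mathfrak{S}(\vecx;P)$ exactly as stated. For the $\beta$-integral, $\prod_j I_j(\beta) = \int_{[-P,P]^s} e(\beta F(\vecx;\vecu))\,d\vecu$, and extending the range of $\beta$ from the union of arcs to all of $\R$ introduces an error controlled by the tail of the singular integral; after the change of variables $\vecu = P\vecy$ one gets $P^{s} \int_\R \int_{[-1,1]^s} e(P^2\beta F(\vecx;\vecy))\,d\vecy\,d\beta$, and then setting $\theta = P^2\beta$ yields $P^{s-2}\int_\R\int_{[-1,1]^s} e(\theta F(\vecx;\vecy))\,d\vecy\,d\theta = P^{s-2}\sigma_\infty(\vecx)$. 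The convergence of $\sigma_\infty(\vecx)$ (for $\vecx$ with enough nonzero coordinates, i.e.\ on the open set $U$) follows from the same Gauss-sum/stationary-phase estimate $\int_{[-1,1]^s} e(\theta F(\vecx;\vecy))\,d\vecy \ll |\theta|^{-s/2}$ used implicitly throughout, and the error from truncating $\theta$ at height $\asymp P^2|\vecx|\cdot P^{-1-\eta}\cdot P^2$ is negligible against the stated error terms for $s \ge 7$; I would fold this tail error into $O(E_2)$ or $O(E_3)$.

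The main obstacle will be bookkeeping the error terms with fully explicit dependence on $\vecx$, in particular verifying that every cross term in the expansion of $\prod_j(q^{-1}S_{x_ja,q}I_j(\beta) + O(q^{1/2}(q,x_j)^{1/2}))$ — there are $2^s$ of them — is dominated by $E_2 + E_3$ after summation over $a$ and $q$. The cleanest route is to bound a term with $k \ge 1$ error factors by $\bigl(\prod_{j \in S} q^{1/2}(q,x_j)^{1/2}\bigr)\bigl(\prod_{j \notin S} q^{-1}(q,x_j)^{1/2}q^{1/2} |I_j(\beta)|\bigr)$ for $|S| = k$, use $|I_j(\beta)| \ll \min(P, |x_j\beta|^{-1/2})$, integrate over $\beta$ in an arc, and sum $\sum_{a \bmod q} 1 = \varphi(q) \le q$ and then over $q$; the worst case $k = 1$ gives exactly the shape of $E_2$ and $E_3$, and larger $k$ is strictly better because each extra error factor trades a $P$ for a $q \le P|\vecx|$, which is controlled since in the major arcs $q$ is not too large relative to $P$ after accounting for the $|\vecx|$ and $P^{\eta}$ factors. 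I would also need the elementary bound $\sum_{q \le X}\prod_j (q,x_j)^{1/2} \ll X^{1+\eps}|\vecx|^{\eps}$ at the stage of summing $E_2$ and $E_3$ over $q$, though as stated the lemma leaves these sums unevaluated, so strictly speaking only the per-$q$ manipulations are needed here.
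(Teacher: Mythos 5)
Your proposal follows essentially the same route as the paper: insert the arc approximation $T_j(\alpha)=q^{-1}S_{x_ja,q}I_j(\beta)+O(q^{1/2}(q,x_j)^{1/2})$, expand the product and bound the cross terms via the Gauss-sum estimate, sum over $a$ and $q$ to form the truncated series $\mathfrak{S}(\vecx;P)$, change variables $\theta=P^2\beta$, $\vecy=P\vecz$ to extract $P^{s-2}\sigma_\infty(\vecx)$, and control the tail of the $\theta$-integral by $\prod_j\min(1,|\theta|^{-1/2}|x_j|^{-1/2})$. The only (harmless) difference is bookkeeping: in the paper the cross terms are handled with just the trivial bound $|I_j(\beta)|\ll P$ and give exactly $E_2$, while $E_3$ arises entirely from completing the truncated $\theta$-integral to the full singular integral, rather than partly from a decay-regime analysis of the cross terms as you anticipated.
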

\begin{proof}
Using Lemma 3.5, we see that since
\begin{align*}
    |q^{-1}S_{x_ja,q}I_j(\beta)| \ll P q^{-1/2}(q,x_j)^{1/2},
\end{align*}
the error terms from multiplying $T_i(\alpha)$'s are bounded by $O(P^{s-1}T(q,\vecx)^{1/2}q^{-(s-2)/2})$, where
\begin{align*}
    T(q,\vecx) = \prod_{j \leq s}(q,x_j).
\end{align*}
Integrating over major arcs and summing over respective ranges for $q$ and $a$, we obtain
\begin{align*}
     \int_\mathfrak{M} T_1(\alpha)...T_s(\alpha) d\alpha &= \sum_{q \leq P|\vecx|}\sum_{\substack{a \text{ mod } q \\ (a,q)=1}}q^{-s}S_{x_1a,q}...S_{x_sa,q}\int_{\mathfrak{M}_{a,q}}\prod_{i-1}^sI_i(\alpha-a/q)\\&+ O(P^{s-2-\eta}|\vecx|^{-1}{\sum_{q \leq P|\vecx|}} \sum_{\substack{a \text{ mod } q \\ (a,q)=1}} q^{-s/2} \prod_{j \leq s}(q,x_j)^{1/2}).
\end{align*}
Applying change of variables with $\theta = P^2\beta$ and $\vecy = P\vecz$, we have
\begin{align*}
    \int_\mathfrak{M} T_1(\alpha)...T_s(\alpha) d\alpha = P^{s-2}\sum_{q \leq P}\sum_{\substack{a \text{ mod } q \\ (a,q)=1}}q^{-s}S&_{x_1a,q}...S_{x_sa,q} \int_{|\theta|  \leq P^{1-\eta}(2q|\vecx|)^{-1}}\int_{[-1,1]^s} e(\theta F(\vecx;\vecy))d\vecy d\theta \;\\ +&O(E_2).
\end{align*}
To complete the singular integral, one notices that by H\"older's inequality we have 
\begin{align*}
    \int_{|\theta| > P^{1-\eta}/(2q|\vecx|)^{-1}} \int_{[-1,1]^s} e(\theta F(\vecx;\vecy))d\vecy d\theta &\ll  \int_{|\theta| > P^{1-\eta}/(2q|\vecx|)^{-1}} \prod_{j \leq s}\min(1,|\theta|^{-1/2}|x_j|^{-1/2}) d\theta \\ 
    &\ll P^{-s/2+1+c\eta}(q|\vecx|)^{s/2-1} |\Delta(\vecx)|^{-1/2}.
\end{align*}
Summing the expression, noting that
\begin{equation}
\label{singularseriesupbound}
    |q^{-s}S_{ax_1,q}... S_{ax_s,q}| \ll q^{-s/2} T(q,\vecx)^{1/2},
\end{equation}
this leads to the error term $E_3$, after replacing the finite integral with singular integral $\sigma_\infty(\vecx)$. 
\end{proof}
We can then combine Lemma 3.8 and Lemma 3.5; Theorem 3.1 follows as a consequence. 

\section{Combination of results}
\label{section4}
This section treats the results in $\S 2,3$. In particular, we seek to sum over respective ranges for $\vecx $ and $\vecy$. To begin we remind the readers that
\begin{align*}
    M_1(\vecy; R) = \#\{\vecx \in \mathbb{Z}^s: |\vecx| \leq R, \text{ }F(\vecx ;\vecy) = 0 \} 
\end{align*}
and 
\begin{align*}
     N_1(Y; B) = \# \Bigg\{ (\vecx,\vecy) &\in \mathbb{Z}^s \times \zprim: \begin{aligned}
        F&(\vecx;\vecy) = 0,\\|\vecx|^{s-1}|\vecy|^{s-2} &\leq B, Y \leq |\vecy| <  2Y
    \end{aligned} \Bigg\}.
\end{align*}
By construction, $N_1(Y;B)$ can be rewritten as
\begin{align*}
        N_1(Y; B) = \sum_{\substack{\vecy \in \zprim\\ Y \leq |\vecy| < 2Y}}M_1(\vecy;{(B/|y|^{s-2})}^{1/(s-1)}).
\end{align*}
We also define
\begin{align*}
    M_2(\vecy;R) = \#\{\vecx \in \zprim: |\vecx| \leq R, \text{ }F(\vecx ;\vecy) = 0 \} 
\end{align*}
and let 
\begin{align*}
     N_2(Y; B) = \# \Bigg\{ (\vecx,\vecy) &\in \zprim \times \zprim: \begin{aligned}
        F&(\vecx;\vecy) = 0,\\|\vecx|^{s-1}|\vecy|^{s-2} &\leq B, Y \leq |\vecy| <  2Y
    \end{aligned} \Bigg\} .
\end{align*}
By construction we see that
\begin{align*}
    N_2(Y;B) = \sum_{\substack{\vecy \in \zprim\\ Y \leq |\vecy| < 2Y}}M_2(\vecy;{(B/|y|^{s-2})}^{1/(s-1)}).
\end{align*}
Notice that $N_2(Y;B)$ is the expression we seek to evaluate in Theorem 1.1; the following lemma gives an asymptotic for this expression. 
\begin{lem}
For $1 \leq Y \leq B^{1/(s+2)}$ we have 
\begin{align*}
    N_2(B;Y) &= \frac{B}{\zeta(s-1)} \sum_{\substack{\vecy \in \zprim\\ Y \leq |\vecy| < 2Y}}\frac{\varrho_\infty(\vecy)}{|y|^{s-2}}+O(B^{1/(s-1)}Y^{s-(s-2)/(s-1)})\\&+O(B^{(s-2)/(s-1)}Y^{(s+2)/(s-1)})+O(B^{1/(s-1)}Y^{{(s-2)}^2/(s-1)}).
\end{align*}
\end{lem}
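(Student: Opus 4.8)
The plan is to deduce the asymptotic for $N_2(Y;B)$ from the already-established asymptotic for $N_1(Y;B)$ in Theorem 2.1 via a Möbius inversion over the content of the vector $\vecx$. First I would observe that any $\vecx \in \mathbb{Z}^s$ with $F(\vecx;\vecy)=0$ can be written uniquely as $\vecx = k\vecx'$ with $k \geq 1$ and $\vecx' \in \zprim$, and since $F$ is linear in $\vecx$ we have $F(\vecx;\vecy) = k F(\vecx';\vecy)$, so $F(\vecx;\vecy)=0$ if and only if $F(\vecx';\vecy)=0$. Hence $M_1(\vecy;R) = \sum_{k \geq 1} M_2(\vecy; R/k)$, and by Möbius inversion $M_2(\vecy;R) = \sum_{k\geq 1}\mu(k) M_1(\vecy;R/k)$. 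Summing over $\vecy$ in the dyadic range and recalling the definition of $N_2$ in terms of $M_2$, this gives
\begin{align*}
N_2(Y;B) = \sum_{k \geq 1}\mu(k)\, N_1\big(Y; B/k^{s-1}\big),
\end{align*}
since replacing $R = (B/|\vecy|^{s-2})^{1/(s-1)}$ by $R/k$ corresponds to replacing $B$ by $B/k^{s-1}$. The sum over $k$ is finite: it is supported on $k \leq (B/Y^{s-2})^{1/(s-1)}$, beyond which $M_1(\vecy; R/k)$ vanishes because there is no nonzero lattice point of sup-norm $< 1$ (and the trivial point $\vecx=0$ is not counted, or contributes a lower-order $O(Y^s)$ that is absorbed).

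Next I would substitute the formula from Theorem 2.1 into each term. The main term of $N_1(Y; B/k^{s-1})$ is $(B/k^{s-1})\sum_{\vecy}\varrho_\infty(\vecy)/|\vecy|^{s-2}$, so summing $\mu(k)/k^{s-1}$ over all $k\geq 1$ produces the factor $1/\zeta(s-1)$ and yields the claimed main term $\tfrac{B}{\zeta(s-1)}\sum_{\vecy}\varrho_\infty(\vecy)/|\vecy|^{s-2}$. The tail correction from extending the sum $\sum_{k}\mu(k)/k^{s-1}$ from the finite range up to infinity is $O\big(\sum_{k > (B/Y^{s-2})^{1/(s-1)}} B/k^{s-1}\big)$, and since $s \geq 7$ this is $O\big(B \cdot (B/Y^{s-2})^{(2-s)/(s-1)}\big) = O\big(B^{1/(s-1)} Y^{(s-2)^2/(s-1)}\big)$, matching the last error term in the statement. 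For this step I would also need the crude bound $\varrho_\infty(\vecy)/|\vecy|^{s-2} \ll |\vecy|^{-(s-2)}$ together with $\#\{\vecy : Y \leq |\vecy| < 2Y\} \ll Y^s$ to control the partial sums of the main term uniformly.

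Then I would handle the two error terms of Theorem 2.1 term by term. Summing the $O(Y^s)$ error over $k$ in the range $k \ll (B/Y^{s-2})^{1/(s-1)}$ gives $O\big(Y^s (B/Y^{s-2})^{1/(s-1)}\big) = O\big(B^{1/(s-1)} Y^{s - (s-2)/(s-1)}\big)$, the first error term. Summing the $O\big(B^{(s-2)/(s-1)}Y^{(s+2)/(s-1)}\big)$ error: replacing $B$ by $B/k^{s-1}$ gives $O\big((B/k^{s-1})^{(s-2)/(s-1)} Y^{(s+2)/(s-1)}\big) = O\big(B^{(s-2)/(s-1)} Y^{(s+2)/(s-1)} k^{-(s-2)}\big)$, and since $s\geq 7$ the sum over $k\geq 1$ converges, giving $O\big(B^{(s-2)/(s-1)}Y^{(s+2)/(s-1)}\big)$, the middle error term. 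Collecting all four pieces yields exactly the stated formula. The one point that requires a little care — and is the main obstacle — is justifying the interchange of the finite $k$-sum with the asymptotic expansion, i.e. making sure the error terms in Theorem 2.1 are genuinely uniform in $B$ so that summing $O(\cdot)$ over the $k$-range is legitimate; this is fine because the implied constants in Theorem 2.1 depend only on $s$, but it should be stated explicitly. A secondary subtlety is confirming that the constraint $Y \leq B^{1/(s+2)}$ is what guarantees the error terms are all genuinely smaller than the main term, which is needed only for the downstream application rather than for the identity itself.
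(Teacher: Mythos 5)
Your proposal follows essentially the same route as the paper: M\"obius inversion over the content of $\vecx$, giving $N_2(Y;B)=\sum_{k}\mu(k)\,N_1(Y;B/k^{s-1})$, termwise substitution of Theorem 2.1 (i.e.\ of \eqref{partialres}), and summation of the error terms over $k$ --- this is exactly the argument the paper imports from Browning--Heath-Brown, and your treatment of the main term (producing $\zeta(s-1)^{-1}$) and of the first two error terms is correct. One quantitative point needs fixing: your bound for the tail of the $k$-sum, $O\big(B\sum_{k>K}k^{-(s-1)}\big)$ with $K=(B/Y^{s-2})^{1/(s-1)}$, implicitly uses $\sum_{Y\le|\vecy|<2Y}\varrho_\infty(\vecy)|\vecy|^{-(s-2)}\ll 1$, whereas the crude bound you quote ($\varrho_\infty(\vecy)\ll 1$, count $\ll Y^s$) only gives $\ll Y^2$ for this dyadic sum, inflating the last error term to $B^{1/(s-1)}Y^{(s-2)^2/(s-1)+2}$. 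To recover the exponent exactly as stated, use instead $\varrho_\infty(\vecy)=V(\vecy)/d(\vecy)\ll |\vecy|^{-2}$, which is immediate from \eqref{rhodef} and makes the dyadic sum $O(1)$; with your weaker bound the conclusion is still $O(B)$ in the range $Y\le B^{1/(s+2)}$ for $s\ge 7$, so the application to Theorem 1.1 is unaffected, but the lemma as stated is not literally obtained. (Your handling of the $\vecx=0$ term is fine: the resulting correction is $O\big((B/Y^{s-2})^{1/(s-1)}\big)$ per $\vecy$, hence absorbed into the first error term after summing over $\vecy$, rather than being merely $O(Y^s)$ in total as written.)
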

\begin{comment}
\begin{proof}
One immediately sees from the definition that 
\begin{align*}
    N_2(Y;B) &= \sum_{\substack{\vecy \in \zprim\\ Y \leq |\vecy| < 2Y}}M_2(\vecy;{(B/|y|^{s-2})}^{1/(s-1)})\\
    &=\sum_{\substack{\vecy \in \zprim\\ Y \leq |\vecy| < 2Y}} \sum_{d \leq M_2(\vecy;{(B/|y|^{s-2})}^{1/(s-1)})}\mu(d) M_1(\vecy;{(B/|y|^{s-2})}^{1/(s-1)}/d)\\
    &=\sum_{d \leq {(B/Y^{s-2})}^{1/(s-1)}}\mu(d)N_1(Y;B/d^{s-1})\\
    &=B\sum_{d \leq {(B/Y^{s-2})}^{1/(s-1)}}\frac{\mu(d)}{d^{s-1}}\sum_{\substack{\vecy \in \zprim\\ Y \leq |\vecy| < 2Y}}\frac{\varrho_\infty(\vecy)}{|\vecy|^{s-2}}+\\& O(Y^s{(B/Y^{s-2})}^{1/(s-1)})+O(B^{(s-2)/(s-1)}Y^{(s+2)/(s-1)}),
\end{align*}
which follows from Theorem 2.1. We note that 
\begin{align*}
    \frac{\varrho_\infty(\vecy)}{|y|^{s-2}} = \frac{V(\vecy)}{d(\vecy)|y|^{s-2}} \ll 1,
\end{align*}
and therefore we can safely replace the first sum with a series, obtaining
\begin{align*}
   \frac{B}{\zeta(s-1)}\sum_{\substack{\vecy \in \zprim\\ Y \leq |\vecy| < 2Y}}\frac{\varrho_\infty(\vecy)}{|\vecy|^{s-2}}+O(B^{1-(s-2)/(s-1)}Y^{{(s-2)}^2/(s-1)}).
\end{align*}
which is satisfactory.
\end{proof}
\end{comment}

\begin{proof}
Using \eqref{partialres}, we note that the proof is completely analogous to \cite[Lemma 5.1]{Browning2}. 
\end{proof}
One notices that when $Y \leq B^{1/(s+2)}$, all 3 error terms contribute $O(B)$. In particular one sees that after dyadic summation this leads to
\begin{align*}
      \# \Bigg\{ (\vecx,\vecy) &\in \zprim \times \zprim: \begin{aligned}
        F&(\vecx;\vecy) = 0;\\|\vecx|^{s-1}|\vecy|^{s-2} &\leq B;|\vecy| \leq B^{1/{(s+2)}}
    \end{aligned} \Bigg\} \\&=  \frac{B}{\zeta(s-1)}\sum_{\substack{\vecy \in \zprim\\ |y| \leq B^{1/(s+2)}}}\frac{\varrho_\infty(\vecy)}{|y|^{s-2}}+O(B).
\end{align*}
Next we turn to the circle method treatment in \S 3; we remind the readers that
\begin{align*}
    M_3(\vecx;P) &= \# \{\vecy \in \mathbb{Z}^s:|\vecy| \leq P, F(\vecx;\vecy) = 0\},
\end{align*}
and we define
\begin{align*}
     N_3(X;B) = \# \Bigg\{ (\vecx,\vecy) &\in \zprim \times \mathbb{Z}^s: \begin{aligned}
        F&(\vecx;\vecy) = 0,\\|\vecx|^{s-1}|\vecy|^{s-2} &\leq B,X \leq |\vecx| < 2X
    \end{aligned} \Bigg\}.
\end{align*}
Then it follows from the construction that
\begin{align*}
    N_3(X;B) &= \sum_{\substack{X \leq |\vecx| < 2X \\ \vecx \in \zprim}} M_3(\vecx;(B/|\vecx|^{s-1})^{1/(s-2)}).
\end{align*}
We make use of the following lemma about minor arc contribution, which shall be proved at the end of this section. 
\begin{lem}
For  $1 \leq X \leq B^{4/(s+2)(s-1)}$ and some small enough $c_5$, we have
\begin{align*}
    \sum_{\substack{X \leq |\vecx| < 2X \\ \vecx \in \zprim}} T \ll B^{1-c_5},
\end{align*}
where $T$ is as defined in Theorem 3.1. 
\end{lem}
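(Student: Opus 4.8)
The plan is to start from the minor-arc bound $T \ll E_1(\vecx;P) = P^{s/2+\eps}|\Delta(\vecx)|^{(s-4)/2s}$ supplied by Theorem 3.1 (equivalently Lemma 3.5), specialised to $P = (B/|\vecx|^{s-1})^{1/(s-2)}$, and then to sum this over the dyadic box $X \le |\vecx| < 2X$ with $\vecx \in \zprim$. Since $P \asymp (B/X^{s-1})^{1/(s-2)}$ uniformly on the box, the first factor contributes $\ll (B/X^{s-1})^{s/(2(s-2))+\eps}$, which can be pulled outside the sum. So the task reduces to estimating
\[
\Sigma := \sum_{\substack{X \le |\vecx| < 2X}} |\Delta(\vecx)|^{(s-4)/2s} = \sum_{X \le |\vecx| < 2X} \Big(\prod_{j \le s} |x_j|\Big)^{(s-4)/2s}.
\]
Since each $|x_j| \le 2X$, the exponent $(s-4)/2s$ is nonnegative for $s \ge 4$, and we may bound $\prod_j |x_j|^{(s-4)/2s} \le (2X)^{s \cdot (s-4)/2s} = (2X)^{(s-4)/2}$ trivially for every $\vecx$ in the box; combined with the $O(X^s)$ bound for the number of lattice points in the box this already gives $\Sigma \ll X^{s + (s-4)/2} = X^{(3s-4)/2}$. (If a sharper bound is wanted one can instead sum each coordinate independently, $\sum_{|x_j| \le 2X} |x_j|^{(s-4)/2s} \ll X^{1 + (s-4)/2s}$, and multiply over the $s$ coordinates to get the same exponent $s + (s-4)/2$; the crude bound suffices here.)

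Putting the two pieces together,
\[
\sum_{\substack{X \le |\vecx| < 2X \\ \vecx \in \zprim}} T
\ll \Big(\frac{B}{X^{s-1}}\Big)^{\frac{s}{2(s-2)}+\eps} X^{\frac{3s-4}{2}}
= B^{\frac{s}{2(s-2)}+\eps}\, X^{\frac{3s-4}{2} - (s-1)\left(\frac{s}{2(s-2)}+\eps\right)}.
\]
A direct computation of the exponent of $X$ gives $\frac{3s-4}{2} - \frac{s(s-1)}{2(s-2)} = \frac{(3s-4)(s-2) - s(s-1)}{2(s-2)} = \frac{2s^2 - 9s + 8}{2(s-2)}$, which is positive for $s \ge 7$ (e.g. it equals $\tfrac{43}{10}$ at $s=7$). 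Hence the exponent of $X$ is positive, so the bound is maximised at the top of the allowed range $X \le B^{4/((s+2)(s-1))}$; substituting $X = B^{4/((s+2)(s-1))}$ and collecting powers of $B$ yields an exponent of $B$ strictly less than $1$ for $s \ge 7$, i.e. an overall bound of the shape $B^{1-c_5}$ with $c_5 = c_5(s) > 0$ (absorbing the $\eps$, which we may take small relative to the gap). This is exactly the claimed estimate.

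The only genuinely delicate point is the final bookkeeping: one must verify that at $X = B^{4/((s+2)(s-1))}$ the total exponent of $B$, namely $\frac{s}{2(s-2)} + \frac{4}{(s+2)(s-1)}\cdot\frac{2s^2-9s+8}{2(s-2)} + O(\eps)$, is indeed $< 1$ for every integer $s \ge 7$ — a monotone-in-$s$ check that is routine but must be done (the worst case is $s = 7$, where one checks the bound numerically to confirm a positive margin, then notes the margin only improves as $s$ grows). Everything else is a mechanical combination of Theorem 3.1 with the trivial count of lattice points in a box, so I expect no conceptual obstacle beyond this arithmetic verification.
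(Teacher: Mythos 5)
The numerical verification you deferred is exactly where the argument breaks: at $s=7$ the exponent is \emph{not} less than $1$. With your own formula, the total exponent of $B$ at $X = B^{4/((s+2)(s-1))}$ is $\frac{s}{2(s-2)} + \frac{4}{(s+2)(s-1)}\cdot\frac{2s^2-9s+8}{2(s-2)}$; for $s=7$ this equals $\frac{7}{10} + \frac{4}{54}\cdot\frac{43}{10} \approx 0.7 + 0.319 = 1.019 > 1$. So summing the pointwise minor-arc bound $E_1(\vecx;P)$ trivially over the box proves the lemma only for $s \ge 8$ (where the exponent is about $0.971$ at $s=8$ and decreases in $s$), or for $s=7$ in the restricted range $X \le B^{3.46/((s+2)(s-1))}$. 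It fails precisely for $s=7$ with $X$ near the top of the allowed range $B^{4/((s+2)(s-1))}$, a case the lemma must cover since Theorem 1.1 is claimed for all $s \ge 7$.

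The paper's proof therefore contains a second, essentially different ingredient that your proposal is missing. In the range $B^{3.46/((s+2)(s-1))} \le X \le B^{4/((s+2)(s-1))}$ it does not take absolute values of each $T$ separately; instead it bounds $\sum_{\vecx} T$ by $\sup_\alpha |S(\alpha)|$, where $S(\alpha)=\sum_{|\vecx|\sim X,\,|\vecy|\le P} e(\alpha C(\vecx,\vecy))$ and $C(\vecu)=\sum_{i\le s} u_i u_{2s+1-i}^2$ is viewed as a cubic form in $2s$ variables. Heath-Brown's Weyl-differencing machinery for cubic forms (\cite[\S 2]{HB2}) reduces this to counting small solutions of the associated bilinear forms $B_i(\vecu,\vecv)$, and the minor-arc information $PX \le q \ll P^{1+\eta}X$ then yields $\sum_{\vecx} T \ll X^{3s/4}P^{3s/4+\eta} + P^{5s/4+\eta}X^{-s/4} \ll B^{1-c_5}$ for $s\ge 7$ in this range. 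The point is that one must gain from genuine averaging over $\vecx$ (the double exponential sum) rather than from the single-$\vecx$ bound $E_1$; without some such input the lemma cannot be deduced from Theorem 3.1 alone when $s=7$.
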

Assuming Lemma 4.2, we provide an estimate for $N_3(X;B)$.
\begin{lem}
For  $1 \leq X \leq B^{4/(s+2)(s-1)}$ and some small enough $c_4$ we have
\begin{align*}
     N_3(X;B) =  B \sum_{\substack{X \leq |\vecx| < 2X \\ \vecx \in \zprim}} \frac{\sigma_\infty(\vecx) \mathfrak{S}(\vecx)}{|\vecx|^{s-1}} + O(B^{1-c_4}),
\end{align*}
 where 
\begin{align*}
    \mathfrak{S}(\vecx) = \sum_{q = 1}^\infty q^{-s} S_q(\vecx) \text{, } S_q(\vecx) = \sum_{\substack{a \text{ mod } q \\ (a,q)=1}}\sum_{\vecb \text{ mod } q} e_q(aF(\vecx;\vecb)).
\end{align*}
\end{lem}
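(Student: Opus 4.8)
The plan is to substitute the asymptotic of Theorem 3.1 into the identity $N_3(X;B)=\sum_{X\le|\vecx|<2X,\ \vecx\in\zprim}M_3\bigl(\vecx;P(\vecx)\bigr)$ with $P(\vecx):=(B/|\vecx|^{s-1})^{1/(s-2)}$, and then to dispose of the error sums that arise. I work throughout on the locus $\Delta(\vecx)=\prod_{j\le s}x_j\ne0$, where Theorem 3.1 is applicable. Since $P(\vecx)^{s-2}=B|\vecx|^{-(s-1)}$, summing the conclusion of Theorem 3.1 gives
\[
N_3(X;B)=B\!\!\sum_{\substack{X\le|\vecx|<2X\\ \vecx\in\zprim}}\!\!\frac{\sigma_\infty(\vecx)\,\mathfrak{S}(\vecx;P(\vecx))}{|\vecx|^{s-1}}
+\sum_{\vecx}T+O\Big(\sum_{\vecx}E_2+\sum_{\vecx}E_3\Big),
\]
all inner sums ranging over the same set of $\vecx$. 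The one structural fact I would isolate is that, for $1\le X\le B^{4/(s+2)(s-1)}$ and $X\le|\vecx|<2X$, one has $P(\vecx)\gg B^{1/(s+2)}$: indeed $P(\vecx)\asymp(B/X^{s-1})^{1/(s-2)}$, and inserting the bound on $X$ pins the exponent of $B$ at $\tfrac1{s-2}\bigl(1-\tfrac4{s+2}\bigr)=\tfrac1{s+2}$. This is the only use of the hypothesis on $X$, and it supplies the saving needed for each error sum below; set $P_X:=(B/X^{s-1})^{1/(s-2)}\asymp P(\vecx)$.

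The term $\sum_{\vecx}T$ is $O(B^{1-c_5})$ by Lemma 4.2, so nothing further is needed there. For the remaining two error sums, and also for the completion of the singular series, the common device is to sum over $\vecx$ before $q$ and to use the elementary mean values
\[
\sum_{0<|x|\le 2X}(q,x)^{1/2}\ll X\,q^{\eps},\qquad
\sum_{0<|x|\le 2X}\Bigl(\frac{(q,x)}{|x|}\Bigr)^{1/2}\ll X^{1/2}q^{\eps},
\]
together with the convergence for $s>4$ of $\sum_{q\ge1}q^{1-s/2+\eps}$. For $E_2$, writing $P(\vecx)^{s-2}=B|\vecx|^{-(s-1)}$ gives $E_2\ll B\,P(\vecx)^{-\eta}|\vecx|^{-s}\sum_{q\le P(\vecx)|\vecx|}q^{1-s/2}\prod_{j}(q,x_j)^{1/2}$; after interchanging the order of summation the $q$-sum contributes $O(X^{s})$, leaving $\sum_{\vecx}E_2\ll B\,P_X^{-\eta}\ll B^{1-\eta/(s+2)}$. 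For $E_3$ the key point is that the factor $|\Delta(\vecx)|^{-1/2}=\prod_{j}|x_j|^{-1/2}$ merges with $\prod_{j}(q,x_j)^{1/2}$ into $\prod_{j}\bigl((q,x_j)/|x_j|\bigr)^{1/2}$, to which the second mean value applies; carrying out the interchange yields
\[
\sum_{\vecx}E_3\ll P_X^{\,s/2}\,X^{s}\,B^{\eps}\ll B^{\,s/(2(s-2))+\eps}\,X^{\,s(s-3)/(2(s-2))},
\]
and the constraint $X\le B^{4/(s+2)(s-1)}$ makes this $\ll B^{1-c}$ for some $c>0$, since the resulting exponent $\tfrac{s}{2(s-2)}+\tfrac{2s(s-3)}{(s+2)(s-1)(s-2)}$ of $B$ is $<1$ exactly when $s^{3}-7s^{2}+6s+8>0$, which holds for every $s\ge6$.

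It remains to replace $\mathfrak{S}(\vecx;P(\vecx))$ by the complete series $\mathfrak{S}(\vecx)=\sum_{q\ge1}q^{-s}S_q(\vecx)$. Summing \eqref{singularseriesupbound} over $a\bmod q$ gives $|q^{-s}S_q(\vecx)|\ll q^{1-s/2}\prod_{j}(q,x_j)^{1/2}$, hence $|\mathfrak{S}(\vecx)-\mathfrak{S}(\vecx;P(\vecx))|\ll\sum_{q>P(\vecx)|\vecx|}q^{1-s/2}\prod_{j}(q,x_j)^{1/2}$; moreover $\sigma_\infty(\vecx)\ll1$, since the van der Corput bound $\int_{-1}^{1}e(\theta x y^{2})\,dy\ll\min(1,|\theta x|^{-1/2})$ lets one retain three of the factors of $\prod_{j}\int_{-1}^{1}e(\theta x_j y^{2})\,dy$ and integrate in $\theta$ (using $s\ge3$). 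Thus the error incurred in the replacement is $\ll B\sum_{X\le|\vecx|<2X}|\vecx|^{-(s-1)}\sum_{q>P_XX}q^{1-s/2}\prod_{j}(q,x_j)^{1/2}$, and interchanging summation — via the first mean value and the tail bound $\sum_{q>Q}q^{1-s/2+\eps}\ll Q^{2-s/2+\eps}$ — this is $\ll B\,P_X^{\,2-s/2}\,X^{\,3-s/2}\,B^{\eps}$, which in the range $X\le B^{4/(s+2)(s-1)}$ is again $\ll B^{1-c}$, now because its exponent of $B$ is $<1$ whenever $s^{3}-3s^{2}-18s+40>0$, valid for $s\ge6$. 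Collecting the four estimates and letting $c_4$ be the least of the exponents so produced yields
\[
N_3(X;B)=B\!\!\sum_{\substack{X\le|\vecx|<2X\\ \vecx\in\zprim}}\!\!\frac{\sigma_\infty(\vecx)\,\mathfrak{S}(\vecx)}{|\vecx|^{s-1}}+O\bigl(B^{1-c_4}\bigr).
\]

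I expect the main obstacle to be precisely the uniform control of $\sum_{\vecx}E_3$ and of the singular-series tail: a pointwise bound multiplied by the $\asymp X^{s}$ admissible vectors $\vecx$ is far too wasteful, so one must interchange the summations over $\vecx$ and $q$, exploit the two mean-value estimates above — the second of which is exactly what absorbs the dangerous factor $|\Delta(\vecx)|^{-1/2}$ in $E_3$ — and then verify that, after the constraint $X\le B^{4/(s+2)(s-1)}$ (equivalently $P(\vecx)\gg B^{1/(s+2)}$) is inserted, the power of $B$ stays strictly below $1$; this last step comes down to the explicit cubic inequalities in $s$ recorded above, each valid for all $s\ge7$.
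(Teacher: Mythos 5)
Your argument is correct and follows the same skeleton as the paper's proof: substitute Theorem 3.1 with $P(\vecx)=(B/|\vecx|^{s-1})^{1/(s-2)}$, invoke Lemma 4.2 for $\sum_{\vecx}T$, control $\sum_{\vecx}E_2$ and $\sum_{\vecx}E_3$ by interchanging the $\vecx$- and $q$-sums with gcd mean values, and finally complete $\mathfrak{S}(\vecx;P|\vecx|)$ to $\mathfrak{S}(\vecx)$; like the paper you implicitly work on $\Delta(\vecx)\neq0$, which is where Theorem 3.1 applies. Where you genuinely diverge is in the two delicate estimates. For $E_3$ the paper first assumes $|x_j|\sim X$ for all $j$ and then removes this restriction by a somewhat sketchy adjustment of exponents ($p_j$'s and $t_j$'s), whereas you absorb the dangerous factor $|\Delta(\vecx)|^{-1/2}$ into the gcd product and use $\sum_{0<|x|\le 2X}\bigl((q,x)/|x|\bigr)^{1/2}\ll X^{1/2}q^{\eps}$, which handles all admissible $\vecx$ at once and is cleaner. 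For the singular-series tail the paper keeps the pointwise bounds $\sigma_\infty(\vecx)\ll|\Delta(\vecx)|^{-1/2}$ and a refined estimate in terms of $\Delta_{\mathrm{bad}}(\vecx)$, and then sums over squarefull moduli with a divisor-function argument; you instead use the uniform bound $\sigma_\infty(\vecx)\ll1$ (valid since every $|x_j|\ge1$) together with the same interchange-and-gcd device, reducing the matter to the explicit cubic inequalities $s^3-7s^2+6s+8>0$ and $s^3-3s^2-18s+40>0$, both comfortably true for $s\ge7$. Your route is more elementary and the exponents check out (e.g.\ for $s=7$ the $E_3$ and tail exponents are roughly $0.91$ and $0.80$), at the mild cost of being slightly more wasteful than the paper's $\Delta_{\mathrm{bad}}$ analysis — a loss that the range $X\le B^{4/(s+2)(s-1)}$ easily absorbs, so the conclusion $N_3(X;B)=B\sum_{\vecx}\sigma_\infty(\vecx)\mathfrak{S}(\vecx)|\vecx|^{-(s-1)}+O(B^{1-c_4})$ follows just as in the paper.
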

\begin{proof}
We wish to sum $M_3(\vecx;P)$ over the range $|\vecx| \sim X$ and $\vecx \in \zprim$. It suffices to evaluate each error term in Theorem 3.1, namely $T$, $E_2$ and $E_3$, and to replace the finite sum by a singular series with another sufficiently bounded error term.\\ 
We begin by evaluating the error terms. Following from Lemma 4.2, it suffices to evaluate $E_2$ and $E_3$. Recall that
\begin{align*}
    E_2(\vecx;P) &= P^{s-2-\eta}|\vecx|^{-1}{\sum_{q \leq P|\vecx|}}  q^{-s/2+1} \prod_{j \leq s}(q,x_j)^{1/2}, \\
    E_3(\vecx;P) &= P^{s/2-1+c\eta} |\Delta(\vecx)|^{-1/2} |\vecx|^{s/2-1}{\sum_{q \leq P|\vecx|}}\prod_{j \leq s}(q,x_j)^{1/2}.
\end{align*}

We first seek to evaluate $E_2$. Note that
\begin{align*}
    \sum_{\substack{X \leq |\vecx| < 2X \\ \vecx \in \zprim}} E_2(\vecx;(B/|\vecx|^{s-1})^{1/(s-2)}) &\ll \frac{B^{1-c\eta}}{X^{s-1-d\eta}}X^{-1}\sum_\vecx \sum_{q \leq (B/|\vecx|^{s-1})^{1/(s-2)}|\vecx|} q^{-s/2+1}\prod_{j \leq s}(q,x_j)^{1/2} \\
    &\ll \frac{B^{1-c\eta}}{X^{s-d\eta}}\sum_{q \leq (B/|\vecx|^{s-1})^{1/(s-2)}|\vecx|} q^{-s/2+1} \prod_{j \leq s} \sum_{\substack{x_j \in \mathbb{Z} \\ |x_j| \leq X}} (q,x_j)^{1/2},
\end{align*}
where $c,d$ are some small fixed positive constants depending only on $s$.
For each individual sum in the final product we see that 
\begin{align*}
    \sum_{\substack{x_j \in \mathbb{Z} \\ x_j \leq X}} (q,x_j)^{1/2} &\ll \sum_{\substack{u_1 \leq X \\ u_1 \mid q^\infty}} (q,u_1)^{1/2}\sum_{\substack{u_2 \leq X/u_1 \\ (u_2,q) = 1}} 1\\
    &\ll X \sum_{\substack{u_1 \leq X \\ u_1 \mid q^\infty}} \frac{(q,u_1)^{1/2}}{u_1}\\
    &\ll_\eps X^{1+\eps}q^\eps,
\end{align*}
where the last inequality follows from that there are $O(X^\eps q^\eps)$ choices in the last sum, which one may obtain by applying Rankin's method to the sum. 
In particular we have 
\begin{align*}
     \sum_{\substack{X \leq |\vecx| < 2X \\ \vecx \in \zprim}} E_2(\vecx;(B/|\vecx|^{s-1})^{1/(s-2)}) &\ll_\eps X^{s+\eps} \frac{B^{1-c\eta}}{X^{s-d\eta}} \sum_{q }q^{-s/2+1+\eps}, \\ 
     &\ll_{\eps,\eta} B^{1-c_2},
\end{align*}
for small enough $c_2$ and $X \leq B^{4/(s+2)(s-1)}$. 

To evaluate the contribution of $E_3$ we start by assuming that $|x_j| \sim X$ for all $j \leq s$. It's easy to see, following similar process as $E_2$, that
\begin{align*}
    \sum_{\substack{X \leq |\vecx| < 2X \\ \vecx \in \zprim}} E_3(\vecx;(B/|\vecx|^{s-1})^{1/(s-2)}) &\ll_{\eta,\eps} X^{s+\eps}(B/X^{s-1})^{(s/2+c\eta)/(s-2)} \ll B^{1-c_3}
\end{align*}
for small enough $c_3,\eta$ and $X \leq B^{4/(s+2)(s-1)}$. We can then remove the condition $|x_j| \sim X$ by the following argument: if we assume that $X^{1-p_j} \gg |x_j| \gg X^{1-t_j}$ for some $t_j$ then we would be replacing $X^s$ with $X^{s-\sum p_j}$ and $|\Delta(\vecx)|^{-1/2}|\vecx|^{s/2-1}$, originally treated as $X^{-1}$, with $X^{\sum t_j/2-1}$. Choosing appropriate and finite number of $p_j$'s and $t_j$'s, one can show that the contribution from this range won't exceed $B^{1-c_3/2}$, which is satisfactory. 

At this point we note that after summing
\begin{align*}
       N_3(X;B) =  B \sum_{\substack{X \leq |\vecx| < 2X \\ \vecx \in \zprim }} \frac{\sigma_\infty(\vecx) \mathfrak{S}(\vecx;B^{1/(s-2)}|\vecx|^{1/(s-2)})}{|\vecx|^{s-1}} + O(B^{1-c_4}),
\end{align*}
so it suffices to establish the validity of replacing the finite sum with the singular series.
We begin by noting that
\begin{align*}
    \sum_{q \leq C}q^{-s/2-2}S_q(\vecx)\ll \sum_{q \leq C}\frac{\prod_{j \leq s}(x_j,q)^{1/2}}{q} \ll C |\Delta_\text{bad}(\vecx)|^{(s-1)/2s},
\end{align*}
where
\begin{align*}
    \Delta_\text{bad}{(\vecx)} = \prod_{\substack{p \mid \Delta(\vecx) \\ \nu_p(\Delta(\vecx)) \geq 2}} p^{\nu_p(\Delta(\vecx))}.
\end{align*}
Hence by partial summation
\begin{align*}
     \sum_{q \geq (B|\vecx|)^{1/(s-2)}}q^{-s}S_q(\vecx)\ll (B|\vecx|)^{(-s/2+3)/(s-2)} |\Delta_\text{bad}(\vecx)|^{(s-1)/2s}.
\end{align*}
Since $\sigma_\infty(\vecx) \ll |\Delta(\vecx)|^{-1/2}$ (c.f. \cite[Theorem 7.1]{Davenport}) and $|\vecx|^{s-1} \geq  |\Delta(\vecx)|^{(s-1)/s}$ we see that the tail of the singular series contributes
\begin{align*}
    \ll (BX)^{(-s/2+3)/(s-2)} \sum_{\substack{X \leq |\vecx| < 2X \\ \vecx \in \zprim}} \frac{|\Delta_\text{bad}(\vecx)|^{(s-1)/2s}}{|\Delta(\vecx)|^{1/2+(s-1)/s}}.
\end{align*}
Rewriting $n = |\Delta_\text{bad}(\vecx)|$ and $t = |\Delta(\vecx)|$, one has
\begin{align*}
    &\ll (BX)^{(-s/2+3)/(s-2)} \sum_{\substack{n \ll X^{s} \\ n \text{ squarefull}}} n^{(s-1)/2s} \sum_{\substack{t \ll X^{s} \\ n \mid t}} \frac{\tau_s(t)}{t^{1/2+(s-1)/s}}\\
    &\ll B^{-c} \sum_{n \text{ squarefull}} \frac{1}{n^{(s+1)/2s}}.
\end{align*}
for small positive constant $c$ where $s \geq 7$ and $X \leq B^{4/(s+2)(s-1)}$. This concludes the proof, noting that the last sum converges.
\end{proof}

Similar as before, define
\begin{align*}
    M_4(\vecx;P) &= \# \{\vecy \in \zprim;|\vecy| \leq P; F(\vecx;\vecy) = 0\}
\end{align*}
and let 
\begin{align*}
      N_4(X;B) &= \# \Bigg\{ (\vecx,\vecy) &\in \zprim \times \zprim: \begin{aligned}
        F&(\vecx;\vecy) = 0,\\|\vecx|^{s-1}|\vecy|^{s-2} &\leq B,X \leq |\vecx| < 2X
    \end{aligned} \Bigg\}.
\end{align*}
By construction we have
\begin{align*}
    N_4(X;B) &= \sum_{\substack{X \leq |\vecx| < 2X \\ \vecx \in \zprim}} M_4(\vecx;(B/|\vecx|^{s-1})^{1/(s-2)}).
\end{align*}
Notice that the expression $N_4(X;B)$ is exactly what we are looking for in Theorem 1.1 and thus we seek an asymptotic for it. 
\begin{lem}
For $1 \leq X \leq B^{4/(s+2)(s-1)}$, we have 
\begin{align*}
     N_4(X;B) = \frac{B}{\zeta(s-2)}& \sum_{\substack{X \leq |\vecx| < 2X \\ \vecx \in \zprim}} \frac{\sigma_\infty(\vecx) \mathfrak{S}(\vecx)}{|\vecx|^{s-1}}+O(B^{1-\eta})
\end{align*}
for  small enough $\eta$.
\end{lem}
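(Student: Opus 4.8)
The plan is to deduce this from Theorem 3.1 (rather than directly from Lemma 4.3) by M\"obius inversion over the content of $\vecy$, in parallel with the deduction of Lemma 4.1 from Theorem 2.1; the factor $\zeta(s-2)^{-1}$ appears because $F$ is homogeneous of degree $2$ in $\vecy$, so $\vecy\mapsto d\vecy$ fixes $F(\vecx;\vecy)=0$ and scales the anticanonical height by $d^{s-2}$. Concretely, for nonzero $\vecy$ one has $\mathbf 1_{\vecy\in\zprim}=\sum_{d\mid\gcd(\vecy)}\mu(d)$, and since $F(\vecx;d\vecz)=d^2 F(\vecx;\vecz)$ the substitution $\vecy=d\vecz$ preserves the equation, yielding the finite identity
\begin{align*}
M_4(\vecx;P)=\sum_{d\leq P}\mu(d)\bigl(M_3(\vecx;P/d)-1\bigr),
\end{align*}
the $-1$ discarding the non-primitive solution $\vecz=\mathbf 0$. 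The advantage of working from Theorem 3.1 is that it holds for \emph{every} $P$ and every $\vecx$ --- the restriction $X\leq B^{4/(s+2)(s-1)}$ enters only when its error terms are summed over $\vecx$ --- so no separate treatment of a ``tail'' in $d$ is required.

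Inserting Theorem 3.1 for each $M_3(\vecx;P/d)$, I would first collect the main terms $\sum_{d\leq P}\mu(d)(P/d)^{s-2}\sigma_\infty(\vecx)\mathfrak S(\vecx;(P/d)|\vecx|)$; completing the truncated singular series to $\mathfrak S(\vecx)$ and extending $\sum_{d\leq P}\mu(d)d^{-(s-2)}$ to $\sum_{d\geq1}\mu(d)d^{-(s-2)}=\zeta(s-2)^{-1}$ (absolutely convergent, as $s-2\geq5$) turns this into
\begin{align*}
\frac{P^{s-2}\sigma_\infty(\vecx)\mathfrak S(\vecx)}{\zeta(s-2)}+O\bigl(\sigma_\infty(\vecx)P\bigr)+(\text{singular-series tail}),
\end{align*}
both corrections being of the shape of error terms already estimated in the proof of Lemma 4.3. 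For the error terms $O(T),O(E_2),O(E_3)$ of Theorem 3.1, evaluated at $P/d$ and summed over $d$, the key point is that each carries a power of $P$ with exponent strictly larger than $1$ for $s\geq7$ --- respectively $s/2$ (using $T\ll E_1=P^{s/2+\eps}|\Delta(\vecx)|^{(s-4)/2s}$), $s-2-\eta$ and $s/2-1+c\eta$ --- so that $\sum_{d\geq1}E_i(\vecx;P/d)\ll E_i(\vecx;P)$ with no loss.

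It then remains to sum over $\vecx\in\zprim$ with $X\leq|\vecx|<2X$, where $P=(B/|\vecx|^{s-1})^{1/(s-2)}$ and hence $P^{s-2}=B/|\vecx|^{s-1}$: the main term becomes $\zeta(s-2)^{-1}B\,A(X)$ with $A(X)=\sum_{\vecx\in\zprim,\,X\leq|\vecx|<2X}\sigma_\infty(\vecx)\mathfrak S(\vecx)|\vecx|^{-(s-1)}$, while every remaining term is controlled exactly as in the proofs of Lemmas 4.2 and 4.3: one has $\sum_{\vecx}T(\vecx;P)\ll B^{1-c_5}$ and $\sum_{\vecx}(E_2+E_3)(\vecx;P)\ll B^{1-c_2}$ for $X\leq B^{4/(s+2)(s-1)}$, and the further corrections $\sum_{\vecx}\sigma_\infty(\vecx)P\ll\sum_{\vecx}|\Delta(\vecx)|^{-1/2}(B/|\vecx|^{s-1})^{1/(s-2)}$, the trivial-solution term $\ll\sum_{\vecx}P$, and the singular-series tail are each $\ll B^{1-\eta}$ in the same range of $X$. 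Putting the pieces together gives $N_4(X;B)=\zeta(s-2)^{-1}B\,A(X)+O(B^{1-\eta})$. The step I expect to demand the most care is re-running the estimate of Lemma 4.2 with $B$ replaced by $B/d^{s-2}$ before summing over $d$: one must check that its proof --- in which large $|\vecx|$ is treated separately --- remains uniform enough that, after factoring out the power of $P$ of exponent exceeding $1$, the sum over $d$ converges; the remaining arguments are a bookkeeping exercise on top of those in \S 3 and \S 4.
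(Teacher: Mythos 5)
Your proposal is correct and essentially the paper's own argument: the paper proves the lemma by exactly this M\"obius inversion over the content of $\vecy$, writing $N_4(X;B)=\sum_{d\leq (B/X^{s-1})^{1/(s-2)}}\mu(d)\,N_3(X;B/d^{s-2})$, inserting the asymptotic for $N_3$ (Lemma 4.3, which already packages Theorem 3.1 together with the error-term summations of Lemmas 4.2 and 4.3), and completing $\sum_d \mu(d)d^{-(s-2)}$ to $\zeta(s-2)^{-1}$. Your version merely unpacks Lemma 4.3 via Theorem 3.1 at each scale $B/d^{s-2}$, and is if anything slightly more careful about points the paper passes over silently (the $\vecy=\mathbf{0}$ term in $M_3$ and the uniformity in $d$ of the estimates when $B$ is replaced by $B/d^{s-2}$).
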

\begin{proof}
By previous lemma we have for suitably small $\eta$
\begin{align*}
    N_4(X;B) &= \sum_{d \leq ({B/X^{s-1}})^{1/(s-2)}}\mu(d)N_3(X;B/d^{s-2})\\
    &=B\sum_{d \leq ({B/X^{s-1}})^{1/(s-2)}} \frac{\mu(d)}{d^{s-1}} \sum_{\substack{X \leq |\vecx| < 2X \\ \vecx \in \zprim}}\frac{\sigma_\infty(\vecx) \mathfrak{S}(\vecx)}{|\vecx|^{s-1}}+O(B^{1-\eta}).
\end{align*}
Replacing the summation over m\"obius functions $\mu$ with a series, we conclude that 
\begin{align*}
     N_4(X;B) = \frac{B}{\zeta(s-2)}& \sum_{\substack{X \leq |\vecx| < 2X \\ \vecx \in \zprim}} \frac{\sigma_\infty(\vecx) \mathfrak{S}(\vecx)}{|\vecx|^{s-1}}+O(B^{1-\eta}).
\end{align*}
\end{proof}
From Lemma 4.1 and 4.4 we may establish that
\begin{align*}
    \# \Bigg\{ (\vecx,\vecy) &\in \zprim \times \zprim: \begin{aligned}
        F&(\vecx;\vecy) = 0;\\|\vecx|^{s-1}|\vecy|^{s-2} &\leq B;|\vecx| \leq B^{4/{(s+2)(s-1)}}
    \end{aligned} \Bigg\} \\&=  \frac{B}{\zeta(s-2)}  \sum_{\substack{|\vecx| \in \zprim \\ \vecx| \leq B^{4/(s+2)(s-1)}}} \frac{\sigma_\infty(\vecx) \mathfrak{S}(\vecx)}{|\vecx|^{s-1}}+O(B^{1-\eta}\log B)+ O(B),
\end{align*}
and in particular
\begin{align*}
    N(\Omega,B) &= \frac{1}{4}\Big(\frac{B}{\zeta(s-2)} \sum_{\substack{|\vecx| \in \zprim \\ |\vecx| \leq B^{4/(s+2)(s-1)}}} \frac{\sigma_\infty(\vecx) \mathfrak{S}(\vecx)}{|\vecx|^{s-1}}+\frac{B}{\zeta(s-1)}\sum_{\substack{\vecy \in \zprim\\ |\vecy| \leq B^{1/(s+2)}}}\frac{\varrho_\infty(\vecy)}{|\vecy|^{s-2}}\Big)\\&+O(B)\\
    &=\frac{B}{4\zeta(s-2)}M_1(B) + \frac{B}{4\zeta(s-1)}M_2(B)+O(B),
\end{align*}
with 
\begin{align*}
    M_1(B) = \sum_{\substack{|\vecx| \in \zprim \\ |\vecx| \leq B^{4/(s+2)(s-1)}}} \frac{\sigma_\infty(\vecx) \mathfrak{S}(\vecx)}{|\vecx|^{s-1}} \text{ and } M_2(B) = \sum_{\substack{\vecy \in \zprim\\ |\vecy| \leq B^{1/(s+2)}}}\frac{\varrho_\infty(\vecy)}{|\vecy|^{s-2}}.
\end{align*}

It remains to prove Lemma 4.2. 
\begin{proof}
We recall that it suffices to show
\begin{align*}
      \sum_{\substack{X \leq |\vecx| < 2X \\ \vecx \in \zprim}} T \ll B^{1-c_5},
\end{align*}
for some small enough $c_5$. We remind the readers that 
\begin{align*}
    T &= \int_{\mathfrak{m}(\vecx)} T_1(\alpha)...T_s(\alpha)d\alpha 
\end{align*}
where $\mathfrak{m}(\vecx)$ is the minor arc associated with $\vecx$. 
Our strategy will be as follows: when $X$ is small, we will apply Lemma 3.4 to provide a satisfying upper bound; when $X$ is large, we will treat the sum over $|\vecx| \leq X$ and $|\vecy| \leq P$ as a cubic form in $2s$ variables, with $P = (B/X^{s-1})^{1/(s-2)}$.
To begin, we recall from Lemma 3.4 that 
\begin{align*}
    \sum_{\substack{X \leq |\vecx| < 2X \\ \vecx \in \zprim}} T   &\ll \sum_{\substack{X \leq |\vecx| < 2X \\ \vecx \in \zprim}} E_1(\vecx;(B/X^{s-1})^{1/(s-2)}) \\
    &\ll X^s \frac{B^{s/2+\eps}}{X^{s(s-1)/2(s-2)}}X^{(s-4)/2} \ll B^{1-c_5} 
\end{align*}
for $s \geq 8$, $X \leq B^{4/(s+2)(s-1)} $ or $s = 7$, $X \leq B^{3.46/(s+2)(s-1)}$ and $c_5 >0$ small enough. 
We now make the definition
\begin{align*}
    C(\vecu) = \sum_{i=1}^s u_i u_{2s+1-i}^2
\end{align*} 
for $C$ cubic form on $\mathbb{Z}^{2s}$. In particular, one notices that 
\begin{equation}
\label{ineq}
      \sum_{\substack{X \leq |\vecx| < 2X \\ \vecx \in \zprim}} T \ll \sup_{\alpha} \left|\sum_{\substack{|\vecx| \sim X, \Delta(\vecx) \neq 0 \\ |\vecy| \leq P \\ \vecx,\vecy \in \mathbb{Z}^s}} e(\alpha C(\vecx,\vecy))\right|.
\end{equation}
where $\sup$ is taken over all $\alpha$ in the minor arcs $\mathfrak{m}(\vecx)$; in particular, if there exists $(a,q)=1$ such that $|\alpha-a/q| \leq 1/2(qXP^{1+\eta})$, then $q \geq PX$. One should think of this as taking supremum over unions of minor arcs for each $\vecx \in \mathbb{Z}^s$(c.f. Lemma 3.4), and in particular, for all such $\alpha$, we have $PX \leq q \leq 4PX$.
\\ Denote the right hand sum by $S(\alpha)$, then tracing the arguments of \cite[\S 2]{HB2}, one finds that 
\begin{align*}
    |S(\alpha)|^4 \ll P^{3s+\eps} X^s \sum_{\substack{|u_1...u_s| \neq 0\text{, }\max_{i \leq s}|u_i| \leq X \\ |u_{s+1}|...|u_{2s}| \leq P}}M(\alpha,P,X),
\end{align*}
where 
\begin{align*}
    M(\alpha,P,X) &:=\# \bigg\{ \vecv \in \mathbb{Z}^{2s}:\begin{aligned}
       & 0 < |v_1|...|v_s| \leq X, |v_{s+1}|...|v_{2s}| \leq P,\\
       & ||6\alpha B_i(\vecu,\vecv)|| \leq P^{-1}  \text{ }  \forall i \leq 2s
    \end{aligned} \bigg\} 
\end{align*}
and where 
\begin{align*}
    B_i(\vecu,\vecv) = \begin{cases}
        \ \frac{1}{3}u_{2s+1-i} v_{2s+1-i} &\text{ if } i \leq s, \\
        \ \frac{1}{3}u_i v_{2s+1-i} + \frac{1}{3}v_i u_{2s+1-i} &\text{ if } i > s. \\
    \end{cases}
\end{align*}
Then trivially we have
\begin{align*}
    M(\alpha,P,X) \ll  \# \bigg\{ \vecv \in \mathbb{Z}^{2s}:\begin{aligned}
       & 0 < |v_1|...|v_{s}|, |v_1|...|v_{2s}| \leq P\\
        &\|6\alpha B_i(\vecu,\vecv)\| \leq P^{-1}  \text{ }  \forall i \leq 2s
    \end{aligned} \bigg\}.
\end{align*}
It therefore follows from \cite[Lemma 2.2, 2.3]{HB2} and \cite[Equation 2.7]{HB2} that 
 \begin{align*}
     |S(\alpha)|^4 \ll X^{3s}P^{s} Z^{-4s}  \# \left\{ (\vecu, \vecv) \in \mathbb{Z}^{4s}:\begin{aligned}
     & |u_1...u_s|,|v_1...v_s| \neq 0\\
       & |v_1|...|v_{2s}|,|u_1|...|u_{2s}| \leq ZP\\
        &\|6\alpha B_i(\vecu,\vecv)\| = 0  \text{ }  \forall i \leq 2s
    \end{aligned} \right\},
\end{align*}
with 
\begin{align*}
    0 < Z < 1, \text{ } Z^2 < (12cq|\beta|P^2)^{-1}, Z^2 < P/2q
\end{align*}
 and
 \begin{align*}
     Z^2 < \max \Big(\frac{q}{6cP^2}, qP|\theta|\Big),
 \end{align*}
 where $\alpha = a/q+\beta$ is the rational approximation as indicated before (or c.f. Lemma 3.4 with $|\vecx|$ replaced by $X$). In particular, from our remark at the beginning of the proof of this lemma, $4P^{1+\eta}X \geq q \geq PX$.  Finally notice that there is only 1 solution to $|6\alpha B_i(\vecu,\vecv)\| = 0$ with $u_1...u_s,v_1...v_s \neq 0$, namely when $ v_{s+1} = ... = v_{2s} = u_{s+1} =...= u_{2s} = 0 $ by definition of the bilinear forms $B_i$'s. 
 
 From previous discussions and \eqref{ineq}, we obtain
 \begin{align*}
      \left(\sum_{\substack{X \leq |\vecx| < 2X \\ \vecx \in \zprim}} T\right)^{4} \ll X^{3s}P^{s+\eta}(1+(q|\theta| P^2)^{2s}+ q^{2s}P^{-2s} + P^{2s}q^{-2s}). 
 \end{align*}
 Finally by our construction of minor arcs and $\alpha$, we have that $\theta \ll 1/qXP^{1+\eta}$ and $PX \leq q \leq 2P^{1+\eta}X$. Thus for $B^{3.46/(s+2)(s-1)} \leq  X \leq B^{4/(s+2)(s-1)}$, we deduce that
 \begin{align*}
     \sum_{\substack{X \leq |\vecx| < 2X \\ \vecx \in \zprim}} T \ll X^{3s/4}P^{3s/4+\eta} + P^{5s/4+\eta} X^{-s/4} \ll B^{1-c_5},
 \end{align*}
which holds for $P = (B/X^{s-1})^{1/(s-2)}$, $\eta$ small enough and $s \geq 7$. We see that this is satisfactory for Lemma 4.2. (When $s = 6$, the error term becomes $O(B^{81/80+\eta})$, which is not satisfactory). 
\end{proof}

\section{Finale}
In this section we establish asymptotic formulae for $M_1(B)$ and $M_2(B)$ as given in \S 4; we will then show that the overall contribution is satisfactory for Theorem 1.1. Again we follow very closely with the work of Heath-Brown and Browning \cite{Browning2}. \\

We begin by evaluating $M_2(B)$ as it is slightly easier to handle. Recall that 
\begin{align*}
    M_2(B) = \sum_{\substack{\vecy \in \zprim\\ |\vecy| \leq B^{1/(s+2)}}}\frac{\varrho_\infty(\vecy)}{|\vecy|^{s-2}},
\end{align*}
where 
\begin{align*}
    \varrho_\infty(\vecy) = \int_{-\infty}^\infty \int_{[-1,1]^s}e(\theta F(\vecx;\vecy))d\vecx d\theta.
\end{align*}
\begin{lem}
For $s \geq 7$, we have
\begin{align*}
        M_2(B) = \frac{(s-2)}{(s+2)\zeta(s)}\tau_\infty \log B +O(1),
\end{align*}
where $\tau_\infty$ is as given in Theorem 1.1.
\end{lem}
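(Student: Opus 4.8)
The plan is to exploit the homogeneity of $\varrho_\infty$ together with M\"obius inversion over the primitivity condition. By \eqref{rhodef} we have $\varrho_\infty(\vecy) = V(\vecy)/d(\vecy)$, and since $F(\vecx;\lambda\vecy) = \lambda^{2} F(\vecx;\vecy)$ for $\lambda > 0$, the substitution $\theta \mapsto \lambda^{2}\theta$ in \eqref{integral} shows $\varrho_\infty$ is homogeneous of degree $-2$. Hence $g(\vecy) := \varrho_\infty(\vecy)/|\vecy|^{s-2}$ is homogeneous of degree $-s$, is nonnegative and bounded on the sup-norm unit sphere $\mathcal{S} := \{\omega \in \R^{s} : |\omega| = 1\}$, and --- being $V(\vecy)/(d(\vecy)|\vecy|^{s-2})$ with $V(\vecy)$ the volume of a hyperplane section of a fixed polytope --- is Lipschitz continuous on every region of $\R^{s} \setminus \{0\}$ bounded away from the origin. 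Writing each $\vecy \in \Z^{s} \setminus \{0\}$ uniquely as $\vecy = e\vecy'$ with $e \geq 1$ and $\vecy' \in \zprim$, and using $g(e\vecy') = e^{-s}g(\vecy')$, we obtain for $H(T) := \sum_{0 < |\vecy| \leq T} g(\vecy)$ and $G(u) := \sum_{\vecy \in \zprim,\, 0 < |\vecy| \leq u} g(\vecy)$ the identity $H(T) = \sum_{e \geq 1} e^{-s} G(T/e)$ (with $G(u) = H(u) = 0$ for $u < 1$), hence by M\"obius inversion $G(T) = \sum_{e \geq 1} \mu(e)\, e^{-s} H(T/e)$. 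Since $M_2(B) = G\big(B^{1/(s+2)}\big)$, it suffices to find an asymptotic for $H(T)$.

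I would then evaluate $H(T)$ by a dyadic decomposition of the range of $|\vecy|$. On the block $2^{k} < |\vecy| \leq 2^{k+1}$ the scaling $\vecy = 2^{k}\vecw$ gives $\sum g(\vecy) = 2^{-ks} \sum_{\vecw \in 2^{-k}\Z^{s},\, 1 < |\vecw| \leq 2} g(\vecw)$, and since $g$ is Lipschitz on the annulus $1 < |\vecw| \leq 2$ a Riemann-sum estimate yields
\[
\sum_{2^{k} < |\vecy| \leq 2^{k+1}} g(\vecy) = \int_{1 < |\vecw| \leq 2} g(\vecw)\, d\vecw + O\big(2^{-k}\big).
\]
Writing $\vecw = r\omega$ with $r = |\vecw|$, $\omega \in \mathcal{S}$ and $d\vecw = r^{s-1}\, dr\, d\omega$ ($d\omega$ the induced measure on $\mathcal{S}$), and using $g(\vecw) = r^{-s}\varrho_\infty(\omega)$, we get $\int_{1 < |\vecw| \leq 2} g\, d\vecw = (\log 2)\, A$ with $A := \int_{\mathcal{S}} \varrho_\infty(\omega)\, d\omega$. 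Summing the $O(\log T)$ dyadic blocks, the incomplete terminal block and the contribution of $|\vecy| < 2$ each being $O(1)$ (the latter since $g \geq 0$), we obtain $H(T) = A \log T + O(1)$ uniformly for $T \geq 1$.

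It remains to identify $A$. The decay estimate $\int_{[-1,1]^{s}} e(-\theta F(\vecx;\vecy))\, d\vecx \ll_{\vecy} (1+|\theta|)^{-2}$ recorded after \eqref{integral}, valid on the full-measure set of $\vecy$ with at least two nonzero coordinates, justifies the Fubini interchange $\tau_\infty = \int_{[-1,1]^{s}} \varrho_\infty(\vecy)\, d\vecy$. Passing again to polar coordinates and using homogeneity of degree $-2$ of $\varrho_\infty$,
\[
\tau_\infty = \int_{[-1,1]^{s}} \varrho_\infty(\vecy)\, d\vecy = \int_0^1 r^{s-3}\, dr \cdot \int_{\mathcal{S}} \varrho_\infty\, d\omega = \frac{A}{s-2},
\]
so $A = (s-2)\tau_\infty$. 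Feeding $H(T) = (s-2)\tau_\infty \log T + O(1)$ into the M\"obius identity and using $\sum_{e \geq 1} \mu(e)e^{-s} = \zeta(s)^{-1}$, the convergence of $\sum_{e \geq 1} \mu(e)(\log e)e^{-s}$, and $\sum_{e > T} e^{-s} \ll T^{1-s}$, we obtain $G(T) = \tfrac{(s-2)\tau_\infty}{\zeta(s)}\log T + O(1)$; taking $T = B^{1/(s+2)}$ gives
\[
M_2(B) = \frac{s-2}{(s+2)\zeta(s)}\, \tau_\infty \log B + O(1),
\]
as claimed.

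The main obstacle is the dyadic step: one must ensure the per-block lattice-point error terms are summable in $k$, which requires recording that $\varrho_\infty$ (equivalently the section-volume $V$) is at least H\"older continuous on annuli bounded away from the origin, and keeping implied constants uniform in $T$. The only other delicate point is the Fubini interchange defining $\tau_\infty = \int_{[-1,1]^{s}}\varrho_\infty\, d\vecy$, which is handled exactly as in the convergence discussion of \S2. The overall argument parallels the treatment in \cite{Browning2}.
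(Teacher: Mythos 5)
Your argument is correct and reaches the right constant, but it follows a genuinely different route from the paper. The paper first inserts the M\"obius sum over $k\le B^{1/(s+2)}$ directly into $M_2(B)$, uses $\varrho_\infty(k\vecy)=k^{-2}\varrho_\infty(\vecy)$, and then converts the lattice sum over the full region $T_0(k)$ into the integral $J_2(B;k)$ by a shifted-unit-cube comparison (Lemma 5.2), whose error term rests on a gradient bound for $\varrho_\infty$ that is only valid for $\min_j|y_j|\ge 2$; the region near the coordinate hyperplanes and the outer boundary shells are excised and estimated separately (the sets $T_1,T_2$ and the error $E_0$ in Lemma 5.3), and finally $J_2(B;k)=(s-2)\tau_\infty\log(B^{1/(s+2)}/k)$ is computed by homogeneity (Lemma 5.4). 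You instead encode primitivity through the Dirichlet-type identity $H(T)=\sum_e e^{-s}G(T/e)$ and its inversion, reduce everything by homogeneity to a single fixed annulus via dyadic rescaling, evaluate the annulus sum by a Riemann-sum estimate, and identify the spherical constant with $(s-2)\tau_\infty$ by sup-norm polar coordinates. Your route is cleaner in that it isolates all the arithmetic in one M\"obius identity and all the analysis on one compact annulus; the paper's route needs no regularity of $\varrho_\infty$ beyond a (crude) gradient bound away from the coordinate hyperplanes, at the cost of a more elaborate boundary bookkeeping.

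The one place where your write-up asserts more than it proves is the claimed Lipschitz continuity of $g$ on the whole annulus $1<|\vecw|\le 2$, including points where several coordinates vanish; note that the gradient bound the paper works with, $\nabla\varrho_\infty(\vecy)\ll \min_j(|y_j|)^{-(s-3)/(s-1)}|\Delta(\vecy)|^{-2/(s-1)}$, blows up there, which is precisely why the paper excises $\min_j|y_j|\le 3$. Your claim is nevertheless true and can be proved in a few lines: writing the central section volume as $A(u)=\tfrac{1}{s-1}\int_{S^{s-2}\cap u^{\perp}}\rho(\omega)^{s-1}\,d\omega$ with $\rho(\omega)=\|\omega\|_\infty^{-1}$ the (Lipschitz, bounded) radial function of the cube, and comparing $u^{\perp}$ with $v^{\perp}$ by a rotation moving points by $O(\|u-v\|)$, gives $|A(u)-A(v)|\ll\|u-v\|$; since $u(\vecy)=(y_1^2,\dots,y_s^2)/d(\vecy)$ and $d(\vecy)\ge 1$ are Lipschitz on the annulus, so is $g$. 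You should record such an argument (or, alternatively, split each annulus into $\min_j|w_j|\ge\delta$ and its complement of measure $O(\delta)$ where $g$ is merely bounded, optimizing $\delta$, exactly in the spirit of the paper's $T_2$/$E_0$ treatment), since mere continuity would only give $o(\log T)$ rather than the stated $O(1)$. The remaining steps --- the Fubini interchange giving $\tau_\infty=\int_{[-1,1]^s}\varrho_\infty(\vecy)\,d\vecy$ (which needs integrability in $\vecy$ as well as the $\theta$-decay, but this is routine), the dyadic summation, and the M\"obius tail estimates --- are all sound.
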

To begin, we add in M\"obius functions to check for primitivity conditions. We have
\begin{align*}
    M_2(B) = \sum_{k \leq B^{1/(s+2)}}\frac{\mu(k)}{k^{s-2}} \sum_{\vecy \in \mathbb{Z}^s \cap T_0 } \frac{\varrho_\infty(k\vecy)}{|\vecy|^{s-2}},
\end{align*}
where 
\begin{align*}
    T_0 = T_0(k) = \{\vecy \in R^s; 1 \leq |\vecy| \leq B^{1/(s+2)}/k\}.
\end{align*}
Noting that $\varrho_\infty(k \vecy) =k^{-2} \varrho_\infty(\vecy)$, we have 
\begin{equation}
\label{inm2}
    M_2(B) = \sum_{k \leq B^{1/(s+2)}}\frac{\mu(k)}{k^{s}} \sum_{\vecy \in \mathbb{Z}^s \cap T_0 } \frac{\varrho_\infty(\vecy)}{|\vecy|^{s-2}}.
\end{equation}
The main idea is then to convert the second sum into an integral around $[-1,1]^s$ to produce $\tau_\infty$ from $\varrho_\infty$. Note that 
\begin{align*}
    \varrho_\infty(\vecy) \ll |\vecy|^{-2},
\end{align*}
which is clear from \eqref{rhodef}. Using this fact, one establishes accordingly from \cite[Lemma 6.2]{Browning2}, changing $4$ variables to $s$ variables when needed, that
\begin{lem}
For $\min_j(|y_j|) \geq 2$, we have 
\begin{equation}
    \label{sumtoint1}
    \frac{ \varrho_\infty(\vecy)}{|\vecy|^{s-2}} = \int_{[0,1]^s} \frac{ \varrho_\infty(\vecy+\vect)}{|\vecy+\vect|^{s-2}} d\vect +O(|\vecy|^{2-s}\min_j(|y_j|)^{-(s-3)/(s-1)} |\Delta(\vecy)|^{-2/(s-1)}).
\end{equation}
\end{lem}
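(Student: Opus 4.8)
The plan is to read \eqref{sumtoint1} as a first-order Taylor comparison between a smooth function and its average over a unit box. Set $\Phi(\vecy):=\varrho_\infty(\vecy)\,|\vecy|^{2-s}$, where $\varrho_\infty$ is the integral \eqref{integral} regarded as a function of a real argument. Since $\min_j|y_j|\ge 2$, for every $\vect\in[0,1]^s$ and every point $\vecz$ on the segment from $\vecy$ to $\vecy+\vect$ one has $|z_j|\asymp|y_j|$ for all $j$, hence $|\vecz|\asymp|\vecy|$, $\min_j|z_j|\asymp\min_j|y_j|$ and $|\Delta(\vecz)|\asymp|\Delta(\vecy)|$; moreover each such $\vecz$ has all coordinates nonzero, so $\Phi$ is smooth there (being built from the absolutely convergent integral \eqref{integral}). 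Writing
\[
\Phi(\vecy)-\int_{[0,1]^s}\Phi(\vecy+\vect)\,d\vect=-\int_{[0,1]^s}\int_0^1\sum_{i=1}^s t_i\,\partial_{y_i}\Phi(\vecy+u\vect)\,du\,d\vect,
\]
the claim reduces to the pointwise gradient bound
\[
|\partial_{y_i}\Phi(\vecz)|\ll|\vecz|^{2-s}\min_j(|z_j|)^{-(s-3)/(s-1)}|\Delta(\vecz)|^{-2/(s-1)}\qquad(1\le i\le s)
\]
uniformly for $\vecz\in\vecy+[0,1]^s$.

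To prove this I split $\partial_{y_i}\Phi$ by the product rule into $(\partial_{y_i}\varrho_\infty(\vecz))\,|\vecz|^{2-s}$ and $\varrho_\infty(\vecz)\,\partial_{y_i}(|\vecz|^{2-s})$. The second piece is harmless: $|\cdot|$ is the sup-norm, so $|\partial_{y_i}(|\vecz|^{2-s})|\ll|\vecz|^{1-s}$ off a measure-zero set, and $\varrho_\infty(\vecz)\ll|\vecz|^{-2}$ by \eqref{rhodef} (as already observed above), giving a contribution $\ll|\vecz|^{-(s+1)}$; this is absorbed into the target, since $\min_j|z_j|\le|\vecz|$ and $|\Delta(\vecz)|\le|\vecz|^{s}$ force $\min_j(|z_j|)^{-(s-3)/(s-1)}|\Delta(\vecz)|^{-2/(s-1)}\ge|\vecz|^{-3}$. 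It therefore remains to show
\[
|\partial_{y_i}\varrho_\infty(\vecz)|\ll\min_j(|z_j|)^{-(s-3)/(s-1)}|\Delta(\vecz)|^{-2/(s-1)}.
\]

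For this I differentiate \eqref{integral} under the integral sign, which is legitimate because all $z_j\ne 0$. With $\psi(v)=\sin(2\pi v)/(\pi v)$ one has $\varrho_\infty(\vecz)=\int_{\R}\prod_j\psi(\theta z_j^2)\,d\theta$ and $\partial_{y_i}\varrho_\infty(\vecz)=\int_{\R}2z_i\theta\,\psi'(\theta z_i^2)\prod_{j\ne i}\psi(\theta z_j^2)\,d\theta$. The elementary bounds $|2z_i\theta\,\psi'(\theta z_i^2)|\ll|z_i|^{-1}\min(|\theta|z_i^2,1)$ and $|\psi(\theta z_j^2)|\ll\min(1,(|\theta|z_j^2)^{-1})$ then give
\[
|\partial_{y_i}\varrho_\infty(\vecz)|\ll\frac1{|z_i|}\int_0^\infty\min(\phi z_i^2,1)\prod_{j\ne i}\min\big(1,(\phi z_j^2)^{-1}\big)\,d\phi.
\]
I would then order the coordinates and break the $\phi$-range at the thresholds $\phi=z_j^{-2}$; on each block the integrand is a monomial in $\phi$ and the integral is elementary, so the whole quantity equals $|z_i|^{-1}$ times a product of powers of the $|z_j|$. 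The last step is the inequality between this monomial and the symmetric quantity $\min_j(|z_j|)^{-(s-3)/(s-1)}|\Delta(\vecz)|^{-2/(s-1)}$, which is a Hölder/AM--GM comparison and is exactly what pins down the exponents $(s-3)/(s-1)$ and $2/(s-1)$. (The same estimate can be run geometrically via $\varrho_\infty(\vecy)=V(\vecy)/d(\vecy)$: the slice $F(\vecx;\vecy)=0$ of $[-1,1]^s$ has normal $(y_1^2,\dots,y_s^2)$, perturbing $\vecy$ tilts it by $O(|\vecy|^{-1})$, and the induced change of the slice-volume $V$ is controlled by how nearly the slice is parallel to a coordinate hyperplane, i.e.\ by $\min_j|y_j|$.)

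The main obstacle is precisely this last estimate: establishing the gradient bound for $\varrho_\infty$ \emph{uniformly in the shape of} $\vecz$, since when several $|z_j|$ are much smaller than $|\vecz|$ the mass of the $\theta$-integral concentrates at different scales and a careless bound overshoots the advertised exponents. Everything else — the averaging identity, the mean-value reduction, the disposal of the norm-power term, and the bookkeeping of the $\phi$-integral — is routine, and the argument parallels \cite[Lemma 6.2]{Browning2} with $4$ variables replaced by $s$.
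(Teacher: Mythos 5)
Your proposal is correct and takes essentially the same route as the paper's argument (the $s$-variable analogue of \cite[Lemma 6.2]{Browning2}): reduce to a gradient bound for $\varrho_\infty$ via averaging over the unit cube and the mean value theorem, then differentiate under the integral sign and control the resulting $\theta$-integral. The step you flag as the main obstacle is in fact immediate and needs no block-by-block splitting: bounding $\min(\phi z_i^2,1)\le 1$ and $\prod_{j\ne i}\min\bigl(1,(\phi z_j^2)^{-1}\bigr)\le\min\bigl(1,\phi^{-(s-1)}\prod_{j\ne i}z_j^{-2}\bigr)$, the $\phi$-integral is $\ll\prod_{j\ne i}|z_j|^{-2/(s-1)}$, and then $|z_i|^{-1}\prod_{j\ne i}|z_j|^{-2/(s-1)}=|z_i|^{-(s-3)/(s-1)}|\Delta(\vecz)|^{-2/(s-1)}\le\min_j(|z_j|)^{-(s-3)/(s-1)}|\Delta(\vecz)|^{-2/(s-1)}$, which is exactly how the paper concludes.
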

It suffices to replace the second sum in \eqref{inm2} with an integral.
\begin{lem}
We have
\begin{align*}
\sum_{\vecy \in \mathbb{Z}^s \cap T_0} \frac{\varrho_\infty(\vecy)}{|\vecy|^{s-2}} = J_2(B;k)+O(1), 
\end{align*}
where
\begin{align*}
    J_2(B;k) = \int_{T_0} \frac{\varrho_\infty(\vecy)}{|\vecy|^{s-2}} d\vecy.
\end{align*}
\end{lem}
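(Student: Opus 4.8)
The plan is to recognise $J_2(B;k)$ as the integral naturally attached to the left-hand sum and to bound their difference by means of the pointwise estimate \eqref{sumtoint1}, once the lattice points lying near a coordinate hyperplane (where \eqref{sumtoint1} fails) have been removed. Write $R=B^{1/(s+2)}/k$. If $R<2$ then $\Z^s\cap T_0$ is finite and $T_0\subseteq\{1\leq|\vecz|\leq 2\}$, on which $\varrho_\infty$ is bounded (from \eqref{rhodef}, since $d(\vecz)\geq|\vecz|^2\geq 1$), so both sides are $O(1)$; assume henceforth $R\geq 2$, so that $T_0=\{\vecz\in\R^s:1\leq|\vecz|\leq R\}$. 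Partition $\Z^s\cap T_0=\mathcal A\cup\mathcal B$, where $\mathcal A=\{\vecy\in\Z^s:\min_j|y_j|\geq 2,\ |\vecy|\leq R\}$ and $\mathcal B$ consists of the $\vecy\in\Z^s\cap T_0$ having a coordinate in $\{-1,0,1\}$.

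First I would show that the $\mathcal B$-part, and the part of $J_2(B;k)$ near the coordinate hyperplanes, are $O(1)$. Using the bound $\varrho_\infty(\vecz)\ll|\vecz|^{-2}$ (clear from \eqref{rhodef}), the contribution of $\mathcal B$ is $\ll\sum_{\vecy\in\mathcal B}|\vecy|^{-s}$; fixing the index and value of a coordinate in $\{-1,0,1\}$ leaves $s-1$ free coordinates, so a dyadic decomposition over $|\vecy|\in[L,2L)$ gives $\ll\sum_{L=2^j}L^{s-1}L^{-s}\ll 1$. The same input yields $\int_{\mathcal B'\cap T_0}\varrho_\infty(\vecz)|\vecz|^{2-s}\,d\vecz\ll 1$, where $\mathcal B'=\{\vecz:|z_j|<2\text{ for some }j\}$: for each $j$, integration over $z_j\in(-2,2)$ and then over the remaining $s-1$ coordinates reduces (using $|\vecz|\geq 1$ on $T_0$) to $\ll\int_1^\infty r^{-2}\,dr\ll 1$.

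Next I would handle $\mathcal A$ through \eqref{sumtoint1}. Since the cubes $\vecy+[0,1)^s$, $\vecy\in\mathcal A$, are pairwise disjoint, summing the main term of \eqref{sumtoint1} over $\vecy\in\mathcal A$ produces $\int_{\mathcal R}\varrho_\infty(\vecz)|\vecz|^{2-s}\,d\vecz$ with $\mathcal R=\bigcup_{\vecy\in\mathcal A}(\vecy+[0,1)^s)$, while the error terms of \eqref{sumtoint1}, after using $(\min_j|y_j|)^{-(s-3)/(s-1)}\leq 1$ and $|\Delta(\vecy)|^{-2/(s-1)}=\prod_j|y_j|^{-2/(s-1)}$, contribute at most
\[
\ll\sum_{L=2^j\leq R}L^{2-s}\prod_{j\leq s}\Bigl(\sum_{2\leq|y_j|\leq 2L}|y_j|^{-2/(s-1)}\Bigr)\ll\sum_{L=2^j}L^{2-s}\,L^{s(1-2/(s-1))}=\sum_{L=2^j}L^{-2/(s-1)}\ll 1,
\]
the middle estimate being valid because $2/(s-1)<1$. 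It remains to compare $\int_{\mathcal R}$ with $J_2(B;k)=\int_{T_0}$. Every $\vecz\in\mathcal R$ satisfies $\min_j|z_j|>1$, hence $1<|\vecz|\leq R+1$; and if $\vecz\in T_0$ but the cube $\vecy+[0,1)^s$ containing it has $\vecy\notin\mathcal A$, then either some $|z_j|<2$ or $|\vecz|>R-1$. Thus $T_0\triangle\mathcal R\subseteq(\mathcal B'\cap T_0)\cup\{R-1<|\vecz|<R+1\}$, on which $\varrho_\infty(\vecz)|\vecz|^{2-s}\ll|\vecz|^{-s}$ with $|\vecz|\geq 1$; the spherical shell contributes $\ll R^{-s}R^{s-1}=R^{-1}\ll 1$, and the $\mathcal B'$ part is $\ll 1$ by the previous step. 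Therefore $\sum_{\vecy\in\Z^s\cap T_0}\varrho_\infty(\vecy)|\vecy|^{2-s}=\int_{\mathcal R}\varrho_\infty(\vecz)|\vecz|^{2-s}\,d\vecz+O(1)=J_2(B;k)+O(1)$.

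The only delicate point, and the one worth stating explicitly, is that all the error sums and integrals above are genuinely $O(1)$ and not $O(\log R)$: confining a point to a bounded neighbourhood of a coordinate hyperplane, or of the sphere $|\vecz|=R$, removes one degree of freedom, turning the harmonic-type sum $\sum_L L^s\cdot L^{-s}$ into the geometric sum $\sum_L L^{s-1}\cdot L^{-s}$. Past this bookkeeping there is no real obstacle, since the cancellation between the discrete sum and the integral over the bulk region $\mathcal A$ is entirely provided by \eqref{sumtoint1}.
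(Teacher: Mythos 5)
Your proposal is correct and takes essentially the same route as the paper's proof: split the lattice points into a bulk region (where $\min_j|y_j|\geq 2$, so Lemma 5.2 converts the sum into an integral over unit cubes) and boundary regions near the coordinate hyperplanes and the outer shell, control the latter with $\varrho_\infty(\vecy)\ll|\vecy|^{-2}$, and bound the accumulated error from \eqref{sumtoint1} by a convergent dyadic sum. Your handling of that last error term is in fact more explicit than the paper's (which only points back to the argument of Lemma 4.3), and letting the cubes overshoot $T_0$ and comparing via the symmetric difference is only a cosmetic variation of the paper's choice to truncate at $|\vecy|\leq B^{1/(s+2)}/k-2$.
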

\begin{proof}
We begin by defining 
\begin{align*}
    X = \{\vecy \in \mathbb{Z}^s: |\vecy| \leq B^{1/(s+2)}/k-2, \min_j|y_j| \geq 2\},
\end{align*}
and let 
\begin{align*}
    Y = \bigcup_{\substack{\vecy \in X}}\vecy + (0,1]^s.
\end{align*}
Note that $Y \subset T_0$ amd $T_0 \setminus Y \subset T_1 \cup T_2$, where
\begin{align*}
    T_1 = \{\vect \in T_0: B^{1/(s+2)}/k -3 \leq |\vecy| \leq B^{1/(s+2)}/k\},
\end{align*}
and 
\begin{align*}
     T_2 = \{\vect \in T_0: \min_j|y_j| \leq 3\}.
\end{align*}
Thus using the previous lemma we see that 
\begin{align*}
    \sum_{\vecy \in \mathbb{Z}^s \cap T_0} \frac{\varrho_\infty(\vecy)}{|\vecy|^{s-2}} &= \sum_{\vecy \in X}\frac{\varrho_\infty(\vecy)}{|\vecy|^{s-2}} + O(\sum_{\vecy \in T_1}+\sum_{\vecy \in T_2} \frac{\varrho_\infty(\vecy)}{|\vecy|^{s-2}})\\
    &=\int_{T_0} \frac{\varrho_\infty(\vecy)}{|\vecy|^{s-2}} d\vecy + O\Big(\sum_{i=0}^2 E_i\Big),
\end{align*}
where
\begin{align*}
    E_0 = \sum_{\substack{\vecy \in \mathbb{Z}^s \cap T_0 \\ \min_j|y_j| \geq 2}} |\vecy|^{2-s}\min_j(|y_j|)^{-(s-3)/(s-1)} |\Delta(\vecy)|^{-2/(s-1)}
\end{align*}
and 
\begin{align*}
    E_i \ll \sum_{\vecy \in \mathbb{Z}^s \cap T_i} |\vecy|^{-s} + \int_{T_i} |\vecy|^{-s} d\vecy
\end{align*}
for $i = 1,2$. In particular, one easily see that $E_i \ll 1$ for $i = 1,2$, and a similar claim holds for $E_0$, by using the same argument as in the proof of Lemma 4.3 in assuming that $\vecy \sim |\vecy|$. This concludes the proof. 
\end{proof}
To prove Lemma 5.1 it remains to investigate $J_2(B;k)$. 
\begin{lem}
Let $k \leq B^{1/(s+2)}$. Then
\begin{align*}
    J_2(B;k) = (s-2) \tau_\infty \log (B^{1/(s+2)}/k).
\end{align*}
\end{lem}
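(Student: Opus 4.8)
The plan is to exploit the scaling behaviour of $\varrho_\infty$. Since $F(\vecx;\vecy)=x_1y_1^2+\cdots+x_sy_s^2$ is homogeneous of degree $2$ in $\vecy$, substituting $\theta\mapsto\lambda^{-2}\theta$ in \eqref{integral} yields $\varrho_\infty(\lambda\vecy)=\lambda^{-2}\varrho_\infty(\vecy)$ for every $\lambda>0$. Writing $\hat{\vecy}=\vecy/|\vecy|$ for the radial projection onto the boundary $S=\partial([-1,1]^s)$ of the unit cube, this reads $\varrho_\infty(\vecy)=|\vecy|^{-2}\varrho_\infty(\hat{\vecy})$, so the integrand $\varrho_\infty(\vecy)/|\vecy|^{s-2}$ appearing in $J_2(B;k)$ equals $|\vecy|^{-s}\varrho_\infty(\hat{\vecy})$ and is positively homogeneous of degree $-s$. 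Note also that $\varrho_\infty$ is bounded on $S$, since $\varrho_\infty(\vecy)\ll|\vecy|^{-2}$ (equivalently $\varrho_\infty(\vecy)=V(\vecy)/d(\vecy)$, with $d(\vecy)$ bounded below and $V(\vecy)$ bounded above on $S$).

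Next I would pass to polar coordinates adapted to the sup-norm. Disintegrating Lebesgue measure along the radial map $\vecy\mapsto(|\vecy|,\hat{\vecy})$ produces a finite measure $\mu$ on $S$ with
\begin{align*}
\int_{\R^s}f(\vecy)\,d\vecy=\int_0^\infty\!\!\int_S f(\rho\omega)\,\rho^{s-1}\,d\mu(\omega)\,d\rho
\end{align*}
for every non-negative measurable $f$. Applying this to $f(\vecy)=\mathbf{1}_{\{1\le|\vecy|\le R\}}(\vecy)\,|\vecy|^{-s}\varrho_\infty(\hat{\vecy})$, where $R=B^{1/(s+2)}/k$, and setting $C:=\int_S\varrho_\infty(\omega)\,d\mu(\omega)$ (finite by the previous paragraph), one obtains
\begin{align*}
J_2(B;k)=\int_1^R\!\!\int_S \rho^{-1}\varrho_\infty(\omega)\,d\mu(\omega)\,d\rho=C\log R.
\end{align*}

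It remains to identify $C$ with $(s-2)\tau_\infty$. Recalling $\tau_\infty=\int_{-\infty}^\infty\int_{[-1,1]^{2s}}e(\theta F(\vecx;\vecy))\,d\vecx\,d\vecy\,d\theta$, I would first note that $\tau_\infty=\int_{[-1,1]^s}\varrho_\infty(\vecy)\,d\vecy$: for $s\ge 7$ the triple integral is absolutely convergent, because $\int_{[-1,1]^s}e(-\theta F(\vecx;\vecy))\,d\vecx=\prod_{j\le s}\frac{\sin(2\pi\theta y_j^2)}{\pi\theta y_j^2}$ and the $\vecy$-integral of this over $[-1,1]^s$ is $O(\min(1,|\theta|^{-s/2}))$, so Fubini allows the $\vecx$- and $\theta$-integrations to be carried out first (the sign of $\theta$ being immaterial). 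Applying the same disintegration once more,
\begin{align*}
\tau_\infty=\int_{[-1,1]^s}\frac{\varrho_\infty(\hat{\vecy})}{|\vecy|^2}\,d\vecy=\int_0^1\!\!\int_S\rho^{s-3}\varrho_\infty(\omega)\,d\mu(\omega)\,d\rho=\frac{C}{s-2}.
\end{align*}
Hence $C=(s-2)\tau_\infty$, and combining with the display for $J_2(B;k)$ gives $J_2(B;k)=(s-2)\tau_\infty\log R=(s-2)\tau_\infty\log(B^{1/(s+2)}/k)$, as required.

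The one point needing care is that $\varrho_\infty(\vecy)$ is only conditionally convergent on the Lebesgue-null set where $\vecy$ has at most one nonzero coordinate, so the identity $\tau_\infty=\int_{[-1,1]^s}\varrho_\infty(\vecy)\,d\vecy$ and the polar change of variables must be justified there. Since $\varrho_\infty$ extends boundedly across that set (as in the computation at the end of \S 2), it contributes nothing; alternatively one may insert the Fej\'er factor $(\sin(\pi\delta\theta)/\pi\delta\theta)^2$ exactly as in \S 2, carry out every step for fixed $\delta>0$ where all integrals converge absolutely, and let $\delta\downarrow 0$ at the end. I anticipate no difficulty beyond this bookkeeping.
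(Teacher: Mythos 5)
Your proof is correct and follows essentially the same route as the paper: both exploit the homogeneity $\varrho_\infty(\lambda\vecy)=\lambda^{-2}\varrho_\infty(\vecy)$ together with a radial decomposition of the cube (the paper, following Browning--Heath-Brown, integrates face by face over the $2s$ faces where some $|y_j|=|\vecy|$, which you package instead as a sup-norm polar disintegration), and both identify the constant through $\tau_\infty=\int_{[-1,1]^s}\varrho_\infty(\vecy)\,d\vecy = C/(s-2)$. Your explicit Fubini/regularization justification of that last identity is a detail the paper leaves implicit, but there is no substantive difference in approach.
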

\begin{proof}
We note that $\varrho_\infty(\vecy)$ is unchanged in response to permuting coordinates, and that $\varrho_\infty(\vecy) = |\vecy|^{-2}\varrho_\infty(t_1...t_{s-1},t_s)$, when $|y_s| = |\vecy|$ and $t_i = y_i/|\vecy|$ for $i \leq s-1$. The rest of the proof is analogous to \cite[Lemma 6.4]{Browning2}. 
\end{proof}
To prove Lemma 5.1 we note that 
\begin{align*}
    M_2(B) &= \sum_{k \leq B^{1/(s+2)}}\frac{\mu(k)}{k^{s}} \sum_{\vecy \in \mathbb{Z}^s \cap T_0 } \frac{\varrho_\infty(\vecy)}{|\vecy|^{s-2}}\\
    &= \sum_{k \leq B^{1/(s+2)}}\frac{\mu(k)}{k^{s}} J_2(B;k) +O(1),
\end{align*}
which follows from Lemma 5.3. 
From
\begin{align*}
    \sum_{k \leq B^{1/(s+2)}}\frac{\mu(k)}{k^{s}} = \zeta(s)^{-1} +O(B^{-(s-1)/(s+2)}),
\end{align*}
Lemma 5.4 implies that 
\begin{align*}
    M_2(B) = \frac{(s-2)}{(s+2)\zeta(s)} \log B + O(1).
\end{align*}
Concluding the proof of lemma 5.1.

We then turn our attention to $M_1(B)$. Again we remind the readers that 
\begin{align*}
    M_1(B) = \sum_{\substack{\vecx \in \zprim \\ |\vecx| \leq B^{4/(s+2)(s-1)}}} \frac{\sigma_\infty(\vecx) \mathfrak{S}(\vecx)}{|\vecx|^{s-1}},
\end{align*}
where 
\begin{align*}
    \sigma_\infty(\vecx) &= \int_{-\infty}^\infty \int_{[-1,1]^s} e(\theta F(\vecx;\vecy))d\vecy d\theta,\\
    \mathfrak{S}(\vecx) &= \sum_{q=1}^\infty q^{-s} S_q(\vecx)\text{, }S_q(\vecx) = \sum_{\substack{a \text{ mod } q\\(a,q) = 1}}\sum_{\vecb \text{ mod } q}e_q(aF(\vecx;\vecb)).
\end{align*}
The remainder of this section is devoted to producing an estimate for $M_1(B)$. 
\begin{lem}
Let $s \geq 7$. Then
\begin{align*}
    M_1(B) = \frac{4\zeta(s-2)}{(s+2)\zeta(s)\zeta(s-1)}\tau_\infty \log B + O(1).
\end{align*}
\end{lem}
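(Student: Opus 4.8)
The plan is to follow the route used for $M_2(B)$ in Lemmas 5.1--5.4, the one genuinely new ingredient being the arithmetic mean of the singular series $\mathfrak{S}(\vecx)$. Write $Z=B^{4/(s+2)(s-1)}$, so $\log Z=\tfrac{4}{(s+2)(s-1)}\log B$. The first step is to remove the primitivity condition, and the key observation is that $\sigma_\infty(\vecx)\mathfrak{S}(\vecx)$ is invariant under $\vecx\mapsto k\vecx$ for $k\in\mathbb{N}$: by Theorem 3.1 one has $P^{-(s-2)}M_3(\vecx;P)=\sigma_\infty(\vecx)\mathfrak{S}(\vecx;P)+o(1)$ whenever $\Delta(\vecx)\neq0$ and $s\geq7$ (the errors $T,E_2,E_3$ are $o(P^{s-2})$ for fixed $\vecx$), and $\mathfrak{S}(\vecx;P)\to\mathfrak{S}(\vecx)$ by the tail estimate of Lemma 4.3, so $\sigma_\infty(\vecx)\mathfrak{S}(\vecx)=\lim_{P\to\infty}P^{-(s-2)}M_3(\vecx;P)$, which is manifestly unchanged on replacing $\vecx$ by $k\vecx$. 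Inserting $\mathbf{1}[\vecx\in\zprim]=\sum_{k\mid\gcd(\vecx)}\mu(k)$ and writing $\vecx=k\vecv$ gives
\begin{equation*}
M_1(B)=\sum_{k\leq Z}\frac{\mu(k)}{k^{s-1}}\,G(Z/k),\qquad G(W):=\sum_{\substack{\vecv\in\mathbb{Z}^s,\ \Delta(\vecv)\neq0\\ 1\leq|\vecv|\leq W}}\frac{\sigma_\infty(\vecv)\mathfrak{S}(\vecv)}{|\vecv|^{s-1}}.
\end{equation*}
Thus it suffices to prove $G(W)=\dfrac{(s-1)\zeta(s-2)}{\zeta(s)}\tau_\infty\log W+O(1)$, for then resumming the M\"obius factor (using absolute convergence for $s\geq3$ of $\sum_k\mu(k)k^{-(s-1)}=\zeta(s-1)^{-1}$ and of $\sum_k\mu(k)(\log k)k^{-(s-1)}$) yields
\begin{equation*}
M_1(B)=\frac{(s-1)\zeta(s-2)}{\zeta(s)\zeta(s-1)}\tau_\infty\log Z+O(1)=\frac{4\zeta(s-2)}{(s+2)\zeta(s)\zeta(s-1)}\tau_\infty\log B+O(1).
\end{equation*}

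To evaluate $G(W)$ I would expand $\mathfrak{S}(\vecv)=\sum_{q\geq1}q^{-s}S_q(\vecv)$. For $s\geq7$ the resulting double sum over $(q,\vecv)$ is absolutely convergent --- combine $\sigma_\infty(\vecv)\ll|\Delta(\vecv)|^{-1/2}$, the bound $|S_q(\vecv)|\ll q^{s/2}\prod_j(q,v_j)^{1/2}$ underlying \eqref{singularseriesupbound}, and the Rankin's-trick divisor estimates from the proof of Lemma 4.3 --- so one may interchange and split $\vecv$ into residues mod $q$:
\begin{equation*}
G(W)=\sum_{q\geq1}q^{-s}\sum_{\vecv_0\bmod q}S_q(\vecv_0)\sum_{\substack{1\leq|\vecv|\leq W\\ \vecv\equiv\vecv_0\,(q)}}\frac{\sigma_\infty(\vecv)}{|\vecv|^{s-1}}.
\end{equation*}
Since $\sigma_\infty$ is homogeneous of degree $-1$, a dyadic-shell computation together with the identity $\tau_\infty=\int_{[-1,1]^s}\sigma_\infty(\vecv)\,d\vecv$ gives $\int_{1\leq|\vecv|\leq W}\sigma_\infty(\vecv)|\vecv|^{1-s}\,d\vecv=(s-1)\tau_\infty\log W+O(1)$, and comparing the inner sum to $q^{-s}$ times this integral --- via a gradient bound for $\sigma_\infty$ proved exactly as in Lemma 5.2, and the same dissection of the ranges of the $|v_j|$ near the hyperplanes $\{v_j=0\}$ used in Lemmas 4.3 and 5.3 --- contributes an aggregate error of $O(1)$.

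It remains to evaluate the complete sum $\sum_{\vecv_0\bmod q}S_q(\vecv_0)$. Opening $S_q$ and summing over $\vecv_0$ first via $\sum_{v\bmod q}e_q(ab^2v)=q\,\mathbf{1}[q\mid b^2]$ (valid since $(a,q)=1$), one finds $\sum_{\vecv_0\bmod q}S_q(\vecv_0)=\varphi(q)q^{s}\bigl(\#\{b\bmod q:q\mid b^2\}\bigr)^{s}=\varphi(q)q^{2s}(q^\ast)^{-s}$, where $q^\ast=\prod_{p^e\|q}p^{\lceil e/2\rceil}$. Hence $G(W)=(s-1)\tau_\infty\log W\sum_{q\geq1}\varphi(q)(q^\ast)^{-s}+O(1)$, and this multiplicative series has local factor
\begin{equation*}
1+\sum_{e\geq1}\varphi(p^e)p^{-s\lceil e/2\rceil}=1+(1-p^{-2})\frac{p^{2-s}}{1-p^{2-s}}=\frac{1-p^{-s}}{1-p^{-(s-2)}},
\end{equation*}
so $\sum_{q\geq1}\varphi(q)(q^\ast)^{-s}=\zeta(s-2)/\zeta(s)$, which proves the claimed asymptotic for $G(W)$ and hence the lemma.

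The main obstacle is the error control in the middle step: $\sigma_\infty$ and its gradient blow up like $|\Delta(\vecv)|^{-1/2}$ near the coordinate hyperplanes $\{v_j=0\}$, so both the passage from the inner sum to its integral and the absolute convergence of the interchanged $(q,\vecv)$-sum require estimates that are uniform in $q$ and survive the range-dissection near those hyperplanes. This is precisely the bookkeeping already performed in the proofs of Lemmas 4.3 and 5.3 (splitting each $|v_j|$ into ranges $X^{1-t_j}$, Rankin's trick for $\sum_{v_j}(q,v_j)^{1/2}$, etc.), but it is the technical heart of the argument; everything else reduces to the homogeneity computation and the elementary Euler product above.
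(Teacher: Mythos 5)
Your proposal is correct in substance and arrives at the right constant, but it organizes the computation quite differently from the paper. The paper never uses the exact invariance of $\sigma_\infty(\vecx)\mathfrak{S}(\vecx)$ under $\vecx\mapsto k\vecx$: instead it first truncates the singular series to $q\le B^\eta$ and the $\vecx$-sum to $|\vecx|\ge B^{3s\eta}$, then detects primitivity with M\"obius while keeping track of the residue class of $\vecx$ modulo $q$ (so only the scaling $\sigma_\infty(k\vecx)=k^{-1}\sigma_\infty(\vecx)$ is used, producing a $k^{-s}$ and hence $\zeta(s)^{-1}$), converts the congruence-restricted $\vecx$-sum to $q^{-s}J_1(B;k)$ (Lemmas~5.7--5.8), and finally evaluates the arithmetic factor through the multiplicative function $\psi(q)=\sum_{\veca,\vecb}c_q(F(\veca;\vecb))$ over \emph{primitive} residues $\veca$, whose Euler product gives $\zeta(s-2)/\zeta(s-1)$. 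You instead strip primitivity first, using $M_3(k\vecx;P)=M_3(\vecx;P)$ together with Theorem~3.1 to get $\sigma_\infty(k\vecx)\mathfrak{S}(k\vecx)=\sigma_\infty(\vecx)\mathfrak{S}(\vecx)$ (equivalently $\mathfrak{S}(k\vecx)=k\,\mathfrak{S}(\vecx)$), which yields $\zeta(s-1)^{-1}$ from the $k$-sum, and then you average $S_q$ over \emph{all} residues mod $q$; your evaluation $\sum_{\vecv_0}S_q(\vecv_0)=\varphi(q)q^{2s}(q^\ast)^{-s}$ and the local factor $(1-p^{-s})/(1-p^{2-s})$ giving $\zeta(s-2)/\zeta(s)$ are correct, and the two factorizations agree: $\zeta(s-1)^{-1}\cdot\zeta(s-2)/\zeta(s)$ versus the paper's $\zeta(s)^{-1}\cdot\zeta(s-2)/\zeta(s-1)$. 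This is a genuinely cleaner arithmetic bookkeeping (it bypasses Lemma~5.9 and, if carried out, would also avoid the $\eta$-dependent error management at the end of \S 5).

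The one place you are lighter than the argument actually requires is the middle step, the comparison of $\sum_{\vecv\equiv\vecv_0\,(q)}\sigma_\infty(\vecv)|\vecv|^{1-s}$ with $q^{-s}\int$. Because you have discarded both of the paper's truncations, this comparison must be done uniformly in \emph{all} $q$ and down to $|\vecv|\asymp 1$, and it is not literally "the same bookkeeping" as Lemmas~4.3 and 5.3: for $|\vecv|$ of size comparable to a power of $q$ the per-class error is $O(1)$, not small, and is only rescued by the decay $q^{-s}\sum_{\vecv_0}|S_q(\vecv_0)|\ll q^{s/2+\eps}$ against the factor $q^{-s}$. Concretely one needs a $q$-dependent dissection, e.g.\ treating $|\vecv|\le q^{A}$ (for a suitable fixed $A$ depending on $s$) by absolute estimates of both the sum and the integral via $|S_q(\vecv)|\ll q^{s/2+\eps}\prod_j(q,v_j)^{1/2}$ and $\sigma_\infty(\vecv)\ll|\Delta(\vecv)|^{-1/2}$, and only applying the gradient bound and hyperplane dissection of Lemma~5.8 for $|\vecv|>q^{A}$; one checks that $A$ must be taken noticeably larger than the exponent $3$ implicit in the paper's choice $|\vecx|\ge B^{3s\eta}$, $q\le B^\eta$, since the equidistribution error there only wins a factor $q^{-c(A-2)}$ which must beat $q^{s/2}$. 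This all works out for $s\ge7$, so I regard it as an incompletely executed step rather than a gap, but it is the part of your proposal that would need to be written out in full.
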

\begin{comment}
    We note that by reordering summation due to absolute convergence of the singular series $\mathfrak
S(\vecx)$ for $s \geq 7$, we have  
\begin{align*}
    M_1(B) = \sum_{q=1}^\infty q^{-s} \sum_{\substack{\veca,\vecb \text{ mod } q\\ (q,\veca) = 1}}c_q(F(\veca;\vecb))\sum_{\substack{\vecx \in \zprim \\ |x| \leq B^{4/(s+2)(s-1)} \\ \vecx \equiv \veca \text{ mod } q}}\frac{\sigma_\infty(\vecx)}{|\vecx|^{s-1}},
\end{align*}
where $c_q(F(\veca;\vecb))$ denotes the Ramanujan sum. \\
\end{comment}

Recall from \S 4 that  
\begin{align*}
     M_1(B) = \sum_{\substack{|\vecx| \in \zprim \\ |\vecx| \leq B^{4/(s+2)(s-1)}}} \frac{\sigma_\infty(\vecx) \mathfrak{S}(\vecx)}{|\vecx|^{s-1}}
\end{align*}
where
\begin{align*}
    \mathfrak{S}(\vecx) = \sum_{q = 1}^\infty q^{-s} S_q(\vecx) \text{, } S_q(\vecx) = \sum_{\substack{a \text{ mod } q \\ (a,q)=1}}\sum_{\vecb \text{ mod } q} e_q(aF(\vecx;\vecb)).
\end{align*}

We will first show that the singular series can be truncated to $q \leq B^{\eta}$ for arbitrarily small $\eta > 0$; this allows for the switching of sums in $q$ and $\vecx$. We see from \cite[Theorem 7.1]{Davenport} that $\sigma_\infty(\vecx) \leq |\Delta(\vecx)|^{-1/2}$ and $|\vecx|^{s-1} \geq |\Delta(\vecx)|^{(s-1)/s}$. By applying \eqref{singularseriesupbound} and partial summation, we see that
\begin{align*}
   \sum_{\substack{\vecx \in \zprim \\ |\vecx| \leq B^{4/(s+2)(s-1)}}} \frac{\sigma_\infty(\vecx)} {|\vecx|^{s-1}} \sum_{q > B^{\eta}} q^{-s} S_q(\vecx)   &\ll  \sum_{\substack{|\vecx| \in \zprim \\ |\vecx| \leq B^{4/(s+2)(s-1)}}} \frac{\sigma_\infty(\vecx)} {|\vecx|^{s-1}} \sum_{q > B^{\eta}} q^{-s/2+1}\prod_{j \leq s} (q,x_j)^{1/2}\\
   &\ll \sum_{q > B^{\eta}} q^{-s/2+1} \prod_{j \leq s} \sum_{\substack{x_j \in \mathbb{Z} \\ |x_j| \leq B^{4/(s+2)(s-1)}}} \frac{(q,x_j)^{1/2}}{x_j^{1/2+(s-1)/s}}.
\end{align*}
Notice that the inner sum inside the product is bounded by 
\begin{align*}
    \sum_{\substack{u_1 \leq B^{4/(s+2)(s-1)} \\ u_1 \mid q^\infty}} \frac{(q,u_1)^{1/2}}{u_1^{1/2+(s-1)/s}} \sum_{\substack{u_2 \leq B^{4/(s+2)(s-1)}/u_1 \\ (u_2,q)=1}} \frac{1}{u_2^{1/2+ (s-1)/s}} \ll_\varepsilon q^\varepsilon
\end{align*}
as the inner sum is absolutely convergent and the outer sum has at most $O(B^\varepsilon q^\varepsilon)$ terms, which one may obtain by applying Rankin's method; the bound is then obtained using partial summation. In particular,
\begin{align*}
     \sum_{\substack{\vecx \in \zprim \\ |\vecx| \leq B^{4/(s+2)(s-1)}}} \frac{\sigma_\infty(\vecx)} {|\vecx|^{s-1}} \sum_{q > B^{\eta}} q^{-s} S_q(\vecx) \ll B^{-\eta(s/2+2+\varepsilon)},
\end{align*}
which is sufficient. This allows us to remove the tail term of the singular series with an error term of $O(B^{-\eta(s/2-2+\varepsilon)})$.

We will then show that we can also truncate the sums in $\vecx$ to only those satisfying $|\vecx| \geq B^{3s\eta}$. Indeed, similar as previous computation, we see that 
\begin{align*}
     \sum_{\substack{\vecx \in \mathbb{Z}^s \\ |\vecx| \leq B^{3s\eta}}} \frac{\sigma_\infty(\vecx)} {|\vecx|^{s-1}} \sum_{q \leq B^{\eta}} q^{-s} S_q(\vecx)  
   &\ll \sum_{q \leq B^{\eta}} q^{-s/2+1} \prod_{j \leq s} \sum_{\substack{x_j \in \mathbb{Z} \\ |x_j| \leq B^{3s\eta}}} \frac{(q,x_j)^{1/2}}{x_j^{1/2+(s-1)/s}}. 
\end{align*}
where the inner sum inside the product is bounded by
\begin{align*}
\sum_{\substack{u_1 \leq B^{3s\eta} \\ u_1 \mid q^\infty}} \frac{(q,u_1)^{1/2}}{u_1^{1/2+(s-1)/s}} \sum_{\substack{u_2 \leq B^{3s\eta}/u_1 \\ (u_2,q)=1}} \frac{1}{u_2^{1/2+ (s-1)/s}} \ll q^\varepsilon,
\end{align*}
again by applying Rankin's method and partial summation. This implies that 
\begin{align*}
     \sum_{\substack{\vecx \in \mathbb{Z}^s \\ |\vecx| \leq B^{3s\eta}}} \frac{\sigma_\infty(\vecx)} {|\vecx|^{s-1}} \sum_{q \leq B^{\eta}} q^{-s} S_q(\vecx) \leq \sum_{q \leq B^{\eta}}q^{-s/2+1+\varepsilon} \ll 1,
\end{align*}
which is also sufficient. 

\indent Thus again, one can use M\"obius function to detect residual primitivity and then define
\begin{align*}
    T_0 = T_0(k) = \{\vecx \in (\mathbb{R}_{\neq 0})^s: B^{3s\eta}/k \leq |\vecx| \leq B^{4/(s+2)(s-1)}/k \}.
\end{align*}
Since $\sigma_\infty(k\vecx) = k^{-1} \sigma_\infty(\vecx)$, we establish the following result. 
\begin{lem}
Let $s \geq 7$. Then
\begin{align*}
    M_1(B) = \sum_{q \leq B^\eta} q^{-s} &\sum_{\substack{\veca,\vecb \text{ mod } q\\ (q,\veca) = 1}}c_q(F(\veca;\vecb))\sum_{\substack{k \leq B^{4/(s+2)(s-1)} \\ (k,q) = 1}}     \frac{\mu(k)}{k^s} \sum_{\substack{\vecx \in \mathbb{Z}^s \cap T_0 \\  \vecx \equiv k^{-1}\veca \text{ mod } q}}\frac{\sigma_\infty(\vecx)}{|\vecx|^{s-1}} + O(1).
\end{align*}
\end{lem}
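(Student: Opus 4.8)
The plan is to deduce the identity from the two truncation estimates just established, combined with a residue-class decomposition and a M\"obius inversion detecting primitivity. Write $R = B^{4/(s+2)(s-1)}$. First, the vectors $\vecx$ with $\Delta(\vecx) = 0$ contribute $O(1)$: for such $\vecx$ one has $\sigma_\infty(\vecx) \ll \prod_{x_j \neq 0}|x_j|^{-1/2}$ and the associated multiple series over $|\vecx| \le R$ converges when $s \ge 7$, by an argument like the tail estimates above. For the remaining $\vecx$, the bound $\sum_{\vecx \in \zprim,\, |\vecx| \le R}\sigma_\infty(\vecx)|\vecx|^{1-s}\sum_{q > B^\eta}q^{-s}S_q(\vecx) \ll B^{-c\eta}$ proved above (for a suitable $c = c(s) > 0$) lets us replace $\mathfrak{S}(\vecx)$ by the truncated sum $\sum_{q \le B^\eta}q^{-s}S_q(\vecx)$, and the companion bound $\sum_{|\vecx| \le B^{3s\eta}}\sigma_\infty(\vecx)|\vecx|^{1-s}\sum_{q \le B^\eta}q^{-s}S_q(\vecx) \ll 1$ lets us discard the vectors with $|\vecx| < B^{3s\eta}$; each step costs $O(1)$. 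The sum over $q \le B^\eta$ is now finite and may be moved outside the sum over $\vecx$.

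Fix $q \le B^\eta$. Since $F(\vecx;\vecb)$ changes by a multiple of $q$ when $\vecx$ or $\vecb$ is shifted by $q$, the quantity $S_q(\vecx)$ depends only on $\vecx \bmod q$; interchanging the $a$- and $\vecb$-summations gives $S_q(\vecx) = \sum_{\vecb \bmod q} c_q(F(\vecx;\vecb))$, with $c_q$ the Ramanujan sum. We group the surviving $\vecx$ according to their residue class $\veca := \vecx \bmod q$. A primitive $\vecx$ has $\gcd(x_1,\dots,x_s) = 1$, so no prime factor of $q$ divides all the $x_j$; hence only classes with $\gcd(q,a_1,\dots,a_s) = 1$, abbreviated $(q,\veca) = 1$, occur. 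This gives
\begin{align*}
M_1(B) = \sum_{q \le B^\eta} q^{-s} \sum_{\substack{\veca,\vecb \bmod q \\ (q,\veca) = 1}} c_q(F(\veca;\vecb)) \sum_{\substack{\vecx \in \zprim,\ \Delta(\vecx) \neq 0 \\ B^{3s\eta} \le |\vecx| \le R,\ \vecx \equiv \veca \bmod q}} \frac{\sigma_\infty(\vecx)}{|\vecx|^{s-1}} + O(1).
\end{align*}

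It remains to unfold the primitivity condition on the innermost sum via $\mathbf{1}[\vecx \in \zprim] = \sum_{k \mid \gcd(\vecx)}\mu(k)$ and substitute $\vecx = k\vecx'$. The image of multiplication by $k$ on $(\mathbb{Z}/q\mathbb{Z})^s$ is $(\gcd(k,q)\mathbb{Z}/q\mathbb{Z})^s$, so the congruence $k\vecx' \equiv \veca \bmod q$ is solvable only if $\gcd(k,q)$ divides every $a_i$; together with $(q,\veca) = 1$ this forces $\gcd(k,q) = 1$, and then the congruence becomes $\vecx' \equiv k^{-1}\veca \bmod q$. By $\sigma_\infty(k\vecx') = k^{-1}\sigma_\infty(\vecx')$ and $|k\vecx'|^{s-1} = k^{s-1}|\vecx'|^{s-1}$ we have $\sigma_\infty(k\vecx')|k\vecx'|^{1-s} = k^{-s}\sigma_\infty(\vecx')|\vecx'|^{1-s}$, while the constraints $\Delta(\vecx) \neq 0$ and $B^{3s\eta} \le |\vecx| \le R$ translate to $\vecx' \in \mathbb{Z}^s \cap T_0(k)$ with $T_0(k) = \{\vecx \in (\mathbb{R}_{\neq 0})^s : B^{3s\eta}/k \le |\vecx| \le R/k\}$; this set is empty once $k > R$, so $k \le R$. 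Relabelling $\vecx'$ as $\vecx$ and collecting factors yields the asserted identity.

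The argument is essentially a rearrangement, so there is no deep obstacle. The points requiring care are the congruence bookkeeping — that primitivity of $\vecx$ forces $(q,\veca) = 1$, and that solvability of $k\vecx' \equiv \veca \bmod q$ then forces $(k,q) = 1$ — together with the verification that each of the three discarded pieces (the tail $q > B^\eta$, the head $|\vecx| < B^{3s\eta}$, and the locus $\Delta(\vecx) = 0$) contributes only $O(1)$; the first two are precisely the displayed estimates above, and the third is an easier convergent-series estimate valid for $s \ge 7$.
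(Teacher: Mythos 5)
Your proposal is correct and follows essentially the same route as the paper: the two truncation estimates (tail $q>B^\eta$ of the singular series and the head $|\vecx|<B^{3s\eta}$), then swapping the $q$- and $\vecx$-sums, M\"obius inversion to unfold primitivity, and the homogeneity $\sigma_\infty(k\vecx)=k^{-1}\sigma_\infty(\vecx)$. The congruence bookkeeping you spell out (that primitivity forces $(q,\veca)=1$, and solvability of $k\vecx'\equiv\veca\bmod q$ forces $(k,q)=1$) and the remark on the locus $\Delta(\vecx)=0$ are exactly the details the paper leaves implicit.
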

Using \cite[Lemma 6.8]{Browning2}, changing 4 variables to $s$ variables with $s \geq 7$, one immediately deduces the following: 
\begin{lem}
If $\min_j |x_j| \geq 2q$ then
\begin{align*}
    \frac{\sigma_\infty(\vecx)}{|\vecx|^{s-1}} = q^{-s}\int_{[0,q]^s} \frac{\sigma_\infty(\vecx+\vect)}{|\vecx+\vect|^{s-1}} d\vect \;+O(q |\vecx|^{-(s-1)}(\min_j |x_j|)^{-1}|\Delta(\vecx)|^{-1/s}).
\end{align*}
\end{lem}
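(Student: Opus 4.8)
The plan is to reproduce the argument behind \eqref{sumtoint1} (the $\varrho_\infty$-analogue, Lemma 5.2), and behind \cite[Lemma 6.8]{Browning2}, essentially verbatim; the one ingredient that must be reworked for the bidegree $(1,2)$ shape is a gradient bound for $\sigma_\infty$. Write
\[
    \sigma_\infty(\vecx) = \int_{-\infty}^\infty \prod_{j \leq s} I(\theta x_j)\, d\theta, \qquad I(\psi) = \int_{-1}^1 e(\psi v^2)\, dv .
\]
First I would record the two facts I need about $I$: van der Corput's second-derivative test gives $I(\psi) \ll \min(1,|\psi|^{-1/2})$ (the continuous analogue of the Gauss-sum bound in Lemma 3.6), and an integration by parts in $I'(\psi) = 2\pi i \int_0^1 w^{1/2} e(\psi w)\, dw$ yields the closed form $\psi I'(\psi) = e(\psi) - \tfrac12 I(\psi)$, whence $\psi I'(\psi) \ll 1$. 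Because the hypothesis $\min_j |x_j| \geq 2q \geq 2$ forces every coordinate of $\vecx$ to be nonzero, $\prod_{j \leq s} I(\theta x_j) \ll \min(1, |\theta|^{-s/2}|\Delta(\vecx)|^{-1/2})$ is absolutely integrable, so there is no convergence issue and differentiation under the integral sign is legitimate; splitting this same integral at $|\theta| = |\Delta(\vecx)|^{-1/s}$ also reproves $\sigma_\infty(\vecx) \ll |\Delta(\vecx)|^{-1/s}$ (one may of course quote \cite[Theorem 7.1]{Davenport} instead).

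Next I would differentiate to get $\partial_{x_j}\sigma_\infty(\vecx) = \int_{-\infty}^\infty \theta I'(\theta x_j) \prod_{i \neq j} I(\theta x_i)\, d\theta$, bound $|\theta I'(\theta x_j)| \ll |x_j|^{-1}$ by the identity above and $\prod_{i \neq j} I(\theta x_i) \ll \min\bigl(1, |\theta|^{-(s-1)/2}(\prod_{i \neq j}|x_i|)^{-1/2}\bigr)$, and split the $\theta$-integral at $|\theta| = (\prod_{i \neq j}|x_i|)^{-1/(s-1)}$. This produces
\[
    \partial_{x_j}\sigma_\infty(\vecx) \ll |x_j|^{-1}\Bigl(\prod_{i \neq j}|x_i|\Bigr)^{-1/(s-1)} = |x_j|^{-(s-2)/(s-1)}|\Delta(\vecx)|^{-1/(s-1)} ,
\]
and using $|\Delta(\vecx)| \geq (\min_j|x_j|)^s$ one checks the right-hand side is $\ll (\min_j |x_j|)^{-1}|\Delta(\vecx)|^{-1/s}$, so that $\nabla \sigma_\infty(\vecx) \ll (\min_j |x_j|)^{-1}|\Delta(\vecx)|^{-1/s}$.

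With these bounds in hand the lemma follows precisely as \eqref{sumtoint1} did. Under $\min_j |x_j| \geq 2q$ any $\vect \in [0,q]^s$ satisfies $t_j \leq q \leq \tfrac12|x_j|$, so $|x_j + \xi t_j| \asymp |x_j|$ for $0 \leq \xi \leq 1$, and hence $\min_j|x_j + \xi t_j|$, $|\Delta(\vecx+\xi\vect)|$ and $|\vecx+\vect|$ are all comparable to their values at $\vect = 0$ along the whole segment. Writing
\[
    \frac{\sigma_\infty(\vecx+\vect)}{|\vecx+\vect|^{s-1}} - \frac{\sigma_\infty(\vecx)}{|\vecx|^{s-1}} = \frac{\sigma_\infty(\vecx+\vect) - \sigma_\infty(\vecx)}{|\vecx+\vect|^{s-1}} + \sigma_\infty(\vecx)\bigl(|\vecx+\vect|^{1-s} - |\vecx|^{1-s}\bigr) ,
\]
the mean value theorem and the gradient bound control the first term by $\ll q|\vecx|^{-(s-1)}(\min_j|x_j|)^{-1}|\Delta(\vecx)|^{-1/s}$, while the mean value theorem applied to $|\cdot|^{1-s}$ together with $\sigma_\infty(\vecx) \ll |\Delta(\vecx)|^{-1/s}$ bounds the second term by $\ll q|\Delta(\vecx)|^{-1/s}|\vecx|^{-s} \leq q|\vecx|^{-(s-1)}(\min_j|x_j|)^{-1}|\Delta(\vecx)|^{-1/s}$. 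Integrating this estimate over the box $[0,q]^s$, of volume $q^s$, on which $\sigma_\infty(\vecx)/|\vecx|^{s-1}$ is constant, and dividing by $q^s$, yields the asserted identity. The only step calling for genuine care is the gradient estimate---both the oscillatory-integral input and the splitting---together with the bookkeeping that collapses the two error contributions into the single stated term; the remaining steps are routine.
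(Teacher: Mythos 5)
Your proposal is correct and takes essentially the same route as the paper, whose printed proof simply invokes \cite[Lemma 6.8]{Browning2} with $4$ variables changed to $s$: your gradient bound $\nabla\sigma_\infty(\vecx)\ll(\min_j|x_j|)^{-1}|\Delta(\vecx)|^{-1/s}$, obtained by differentiating under the integral using $I(\psi)\ll\min(1,|\psi|^{-1/2})$ and $\psi I'(\psi)\ll 1$, followed by the mean-value decomposition and averaging over the box $[0,q]^s$, is exactly that adaptation. The only difference is cosmetic: you use the (sufficient) bound $\sigma_\infty(\vecx)\ll|\Delta(\vecx)|^{-1/s}$ for the second error term, which slightly streamlines the bookkeeping.
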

We can then convert the $\vecx$-sum into an integral.
\begin{lem}
For $s \geq 7$ we have
\begin{align*}
     \sum_{\substack{\vecx \in \mathbb{Z}^s \cap T_0 \\  \vecx \equiv k^{-1}\veca \text{ mod } q}}\frac{\sigma_\infty(\vecx)}{|\vecx|^{s-1}} = q^{-s}J_1(B;k)+O(qk^{s/2}B^{-3\eta(s-1)})
\end{align*} 
where 
\begin{align*}
    J_1(B;k) = \int_{T_0(k)} \frac{\sigma_\infty(\vecy)}{|\vecy|^{s-1}} d\vecy.
\end{align*}
\end{lem}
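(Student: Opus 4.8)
The plan is to tile the residue class $\{\vecx\in\mathbb Z^s:\vecx\equiv k^{-1}\veca\bmod q\}$ by cubes of side $q$ and to apply Lemma 5.7 cube by cube, following the four-variable argument of \cite[\S6]{Browning2}. Write $R_1=B^{4/(s+2)(s-1)}/k$, so that $T_0(k)=\{\vecy\in(\R_{\neq 0})^s:B^{3s\eta}/k\le|\vecy|\le R_1\}$. First I would dispose of the degenerate range $B^{3s\eta}/k< 2q\sqrt s$: here both $\sum_{\vecx}\sigma_\infty(\vecx)|\vecx|^{-(s-1)}$ and $q^{-s}J_1(B;k)$ are $O(q^{-s}\log B)$ (by the bounds $\sigma_\infty(\vecx)\ll|\Delta(\vecx)|^{-1/2}\ll 1$ and $\sigma_\infty(\vecx)\ll|\vecx|^{-1}$ together with the trivial dyadic lattice-point count), whereas $k\ge B^{(3s-1)\eta}$ forces the claimed error $qk^{s/2}B^{-3\eta(s-1)}$ to be $\gg B^{c\eta}$ for some $c=c(s)>0$, which dominates $q^{-s}\log B$; so the estimate is trivial. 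Hence one may assume $B^{3s\eta}/k\ge 2q\sqrt s$, and in particular $B^{3s\eta}/k>1$.

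Next, for each $\vecx\in T_0(k)\cap\mathbb Z^s$ in the residue class with $\min_j|x_j|\ge 2q$, Lemma 5.7 expresses $\sigma_\infty(\vecx)|\vecx|^{-(s-1)}$ as $q^{-s}\int_{\vecx+[0,q]^s}\sigma_\infty(\vecy)|\vecy|^{-(s-1)}\,d\vecy$ plus an error $O\!\big(q|\vecx|^{-(s-1)}(\min_j|x_j|)^{-1}|\Delta(\vecx)|^{-1/s}\big)$. Since the half-open cubes $\vecx+[0,q)^s$, as $\vecx$ ranges over the residue class, are disjoint, summing over all admissible $\vecx$ collapses the main terms into $q^{-s}\int_{\mathcal R}\sigma_\infty(\vecy)|\vecy|^{-(s-1)}\,d\vecy$, where $\mathcal R$ is the union of these cubes. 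It then remains to replace $\mathcal R$ by $T_0(k)$ and to sum the error terms.

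For the first point, $\mathcal R$ differs from $T_0(k)$ only within an $O(q)$-neighbourhood of $\partial T_0(k)$: the spheres $|\vecy|=B^{3s\eta}/k$ and $|\vecy|=R_1$, and the coordinate hyperplanes (the latter appearing because $\vecx$ with a small coordinate were discarded). Using $\sigma_\infty(\vecy)\ll|\Delta(\vecy)|^{-1/2}$ (\cite[Theorem 7.1]{Davenport}), the integral of $\sigma_\infty(\vecy)|\vecy|^{-(s-1)}$ over a shell of width $q$ about a sphere of radius $\rho$ is $\ll q\rho^{-s/2}$, and over the slab $\{\min_j|y_j|\le 3q\sqrt s\}\cap T_0(k)$ it is $\ll q^{1/2}(B^{3s\eta}/k)^{-(s-1)/2}$; since $\rho\ge B^{3s\eta}/k$ in both shells, each of these contributions is $\ll qk^{s/2}B^{-3\eta(s-1)}$ (for the sphere of radius $R_1$ one uses $R_1^{-s/2}=k^{s/2}B^{-2s/(s+2)(s-1)}$, which beats $k^{s/2}B^{-3\eta(s-1)}$ for $\eta$ small), so this is within the stated error.

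For the second point I would bound $q\sum_{\vecx}|\vecx|^{-(s-1)}(\min_j|x_j|)^{-1}|\Delta(\vecx)|^{-1/s}$ over the admissible $\vecx$ by decomposing dyadically both in $|\vecx|\sim L$ and in $\min_j|x_j|\sim M$, and using that the residue class contains $\ll(L/q)^s$ points with $|\vecx|\sim L$. The factor $|\Delta(\vecx)|^{-1/s}$ supplies exactly the extra decay needed to make the resulting geometric series convergent, with the sums dominated by the smallest scales $M\sim q$ and $L\sim B^{3s\eta}/k$; this is what produces the negative power $B^{-3\eta(s-1)}$, and one checks (here $s\ge 7$ is used, to reabsorb the $q$- and $k$-powers lost in the crude point count) that the total is $\ll qk^{s/2}B^{-3\eta(s-1)}$. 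Combining the two points with $J_1(B;k)=\int_{T_0(k)}\sigma_\infty(\vecy)|\vecy|^{-(s-1)}\,d\vecy$ then yields the lemma. I expect the only real obstacle to be the bookkeeping in this last step: making sure every dyadic piece, and every piece of the boundary layer, comes out uniformly as $O(qk^{s/2}B^{-3\eta(s-1)})$ — in particular that the $(\min_j|x_j|)^{-1}$ factor, which is large when $\vecx$ is near a coordinate hyperplane, is always beaten by the scarcity of such $\vecx$ together with the $|\Delta(\vecx)|^{-1/s}$ decay. This is the $s$-variable analogue of the computation in \cite[\S6]{Browning2}, and it goes through for $s\ge 7$.
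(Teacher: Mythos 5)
Your overall plan -- tiling the residue class by side-$q$ cubes, applying Lemma 5.7 cube by cube, and comparing the union of cubes with $T_0(k)$ via $O(q)$-neighbourhoods of the two spheres and of the coordinate hyperplanes -- is exactly the paper's argument (its sets $X$, $Y$ and $T_1,T_2,T_3$), and your boundary and slab integral estimates agree with the paper's. The genuine gap is in your ``second point'', the accumulated Lemma 5.7 errors $q\sum_{\vecx}|\vecx|^{-(s-1)}(\min_j|x_j|)^{-1}|\Delta(\vecx)|^{-1/s}$. With your decomposition into blocks $|\vecx|\sim L$, $\min_j|x_j|\sim M$, the crude residue-class count $\ll (M/q)(L/q)^{s-1}$ and the pointwise bound $|\Delta(\vecx)|^{-1/s}\le (M^{s-1}L)^{-1/s}$, a block contributes $\ll q^{1-s}M^{-(s-1)/s}L^{-1/s}$; summing (the smallest scales $M\sim q$, $L\sim B^{3s\eta}/k$ do dominate, as you say) gives only $\ll q^{1-s-(s-1)/s}k^{1/s}B^{-3\eta}$. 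For small $q,k$ (say $q=k=1$) this is $B^{-3\eta}$, far larger than the claimed $B^{-3\eta(s-1)}$, and no power of $q$ or $k$ is available to ``reabsorb'' the deficit, whatever $s$ is. The failure is structural: inside a dyadic block the worst-case value of $|\Delta(\vecx)|^{-1/s}$ is attained by very few points, so pairing it with the full point count destroys most of the decay.

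The paper avoids this by keeping the product structure $|\Delta(\vecx)|^{-1/s}=\prod_j|x_j|^{-1/s}$ and summing coordinate-by-coordinate after ordering $x_1\le\cdots\le x_s$: the minimum gives $\sum_{x_1\ge 2q}x_1^{-(s+1)/s}\ll q^{-1/s}$, each middle coordinate gives $\ll x_s^{(s-1)/s}$, and the sum over $x_s\ge B^{3s\eta}/k$ then produces $E_0\ll (qkB^{-3s\eta})^{(s-1)/s}$ -- this is precisely where the exponent $-3\eta(s-1)$ in the statement comes from, and your write-up does not reach it. A second, smaller omission: you bound only the \emph{integrals} over the boundary shells and the slab $\{\min_j|y_j|\le 3q\sqrt{s}\}$, but the lattice points you discarded (those of $T_0$ in the residue class with some $0<|x_j|<2q$, or close to the inner/outer spheres) still contribute directly to the left-hand sum and must be estimated as sums; the paper does this alongside the integrals, e.g. $\sum_{\vecx\in\mathbb{Z}^s\cap T_3}|\Delta(\vecx)|^{-1/2}|\vecx|^{-(s-1)}\ll q^{1/2}(kB^{-3s\eta})^{s/2-1}$, which is the same type of computation as your slab integral. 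Both points are repairable, but as written the proposal does not establish the stated error term $O\bigl(qk^{s/2}B^{-3\eta(s-1)}\bigr)$.
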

\begin{proof}
We define
\begin{align*}
    X = \Bigg\{\vecx \in \mathbb{Z}^s: \begin{aligned}
         B^{3s\eta}&/k+2q \leq |\vecx| \leq B^{4/(s+2)(s-1)}/k-2q,\\ &\min |x_j| \geq 2q,
         \vecx \equiv k^{-1}\veca \text{ mod } q
    \end{aligned} \Bigg\},
\end{align*}
and let 
\begin{align*}
    Y = \bigcup_{\vecx \in X}\vecx + [0,q)^s,
\end{align*}
then we note that $Y \subset T_0$ and $T_0 \setminus Y \subset T_1 \cup T_2 \cup T_3$, where 
\begin{align*}
    T_1 &= \{\vect \in T_0: B^{3s\eta}/k \leq |\vect| \leq B^{3s\eta}/k + 3q\}, \\
    T_2 &= \{\vect \in T_0:  B^{4/(s+2)(s-1)}/k-3q \leq |\vect| \leq B^{4/(s+2)(s-1)}/k\}.
\end{align*}
and 
\begin{align*}
    T_3 &= \{\vect \in T_0: \min |t_i| \leq 3q\}.
\end{align*}
Using previous lemma we obtain
\begin{align*}
      \sum_{\substack{\vecx \in \mathbb{Z}^s \cap T_0 \\  \vecx \equiv k^{-1}\veca \text{ mod } q}}\frac{\sigma_\infty(\vecx)}{|\vecx|^{s-1}} = q^{-s}J_1(B;k) + O\left(\sum_{i=0}^3 E_i\right),
\end{align*}
with 
\begin{align*}
    E_0 =   \sum_{\substack{\vecx \in \mathbb{Z}^s \cap T_0 \\  min|x_j| \geq 2q}} q |\vecx|^{-(s-1)}(\min_j |x_j|)^{-1} |\Delta(\vecx)|^{-1/s}
\end{align*}
and 
\begin{align*}
    E_i \ll \sum_{\vecx \in \mathbb{Z}^s \cap T_i} |\vecx|^{-(s-1)}||\Delta(\vecx)|^{-1/2} + q^{-s}\int_{T_i} |\vecy|^{-(s-1)}| |\Delta(\vecy)|^{-1/2} d\vecy
\end{align*}
for $i = 1,2,3$, following from the estimate $|\sigma_\infty(\vecx)| \ll |\vecx|^{-1/2}$ which holds for $s \geq 7$ (c.f. \cite[Page 46]{Davenport}, where $\sigma_\infty(\vecx)$ showed up as the constant term in $\mathcal{N}(P)$ for $s \geq 7$). Note that we've dropped the congruence conditions in the error term. In particular, we have that 
\begin{align*}
    E_0 &\ll q \sum_{\substack{2q \leq x_1 \leq x_2, \dots , x_{s-1} \leq x_s \\ x_s \geq B^{3s\eta}/k}} x_1^{-(s+1)/s} (x_2\dots x_{s-1})^{-1/s} x_s^{-(s-1)-1/s} \\
    &\ll q^{(s-1)/s} \sum_{x_s \geq B^{3s\eta}/k} x_s^{-(s-1)-1/s+(s-1)(s-2)/s} 
    \ll (qkB^{-3s\eta})^{(s-1)/s}.
\end{align*}
Similarly we have that 
\begin{align*}
    E_1 &\ll \sum_{\substack{2q \leq x_1, \dots , x_{s-1} \leq x_s \\B^{3s\eta}/k \leq x_s \leq B^{3s\eta}/k+3q}} x_s^{-(s-1)-1/2}(x_1\dots x_{s-1})^{-1/2} + \int_{B^{3s\eta}/k}^{B^{3s\eta}/k+3q}y_s^{-(s-1)-1/2+(s-1)/2} d y_s \\
    &\ll \sum_{B^{3s\eta}/k \leq x_s \leq B^{3s\eta}/k+3q} x_s^{-(s-1)-1/2+(s-1)/2} + \int_{B^{3s\eta}/k}^{B^{3s\eta}/k+3q}y_s^{-(s-1)-1/2+(s-1)/2} d y_s \\
    &\ll q(kB^{-3s\eta})^{s/2}
\end{align*}
and 
\begin{align*}
    E_2 \ll q(kB^{-4/(s+2)(s-1)})^{s/2}.
\end{align*}
Finally for $E_3$ we evaluate the sum, as the integral is treated similarly. We have that  
\begin{align*}
    E_3 &\ll \sum_{\substack{x_1 \leq 3q \\ 1 \leq x_2,\dots, x_{s-1}\leq x_s \\ x_s \geq B^{3s\eta}/k}} (x_1\dots x_{s-1})^{-1/2}x_s^{-(s-1)-1/2} \\
    &\ll q^{1/2} \sum_{x_s \geq B^{3s\eta}/k} x_s^{-s+1/2+(s-1)/2} \\
    &\ll q^{1/2} (kB^{-3s\eta})^{s/2-1}.
\end{align*}
Combining, we see that 
\begin{align*}
    \sum_{\substack{\vecx \in \mathbb{Z}^s \cap T_0 \\  \vecx \equiv k^{-1}\veca \text{ mod } q}}\frac{\sigma_\infty(\vecx)}{|\vecx|^{s-1}} = q^{-s}J_1(B;k)+O(qk^{s/2}B^{-3\eta(s-1)}),
\end{align*}
which concludes the proof. 
\end{proof}
We can then combine everything to get
\begin{align*}
    M_1(B) = \sum_{q \leq B^{\eta}} q^{-2s} \sum_{\substack{\veca,\vecb \text{ mod } q\\ (q,\veca) = 1}}c_q(F(\veca;\vecb))\sum_{\substack{k \leq B^{4/(s+2)(s-1)} \\ (k,q) = 1}}     \frac{\mu(k)}{k^s} J_1(B;k) +O(1).
\end{align*}
Denote
\begin{align*}
    \psi(q) = \sum_{\substack{\veca,\vecb \text{ mod } q\\ (q,\veca) = 1}}c_q(F(\veca;\vecb)),
\end{align*}
Heath-Brown and Browning in \cite[Lemma 6.10]{Browning2} have established that $\psi$ is multiplicative; changing $4$ variables to $s$ variables when appropriate, one may easily establish the following lemma, using the same argument.
\begin{lem}
$\psi$ is multiplicative, and 
\begin{align*}
    \psi(p^f) = \begin{cases}
    \ \varphi(p^f)p^{3sf/2}(1-p^{-s}) & \text{if }2 \mid f,\\
    \ 0 &  \text{if }2 \nmid f.
    \end{cases}
\end{align*}
for positive integer $f$ and prime $p$.
\end{lem}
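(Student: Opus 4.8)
The plan is to adapt the argument of \cite[Lemma 6.10]{Browning2}, replacing $4$ by $s$; there are three ingredients.

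First I would record that $\psi$ is multiplicative. Expanding the Ramanujan sum via $c_q(n)=\sum_{(a,q)=1}e_q(an)$ and interchanging the order of summation identifies $\psi(q)$ with $\sum_{\veca\bmod q,\ (q,\veca)=1}S_q(\veca)$, where $S_q$ is the exponential sum from Theorem 3.1. The sums $S_q(\veca)$ are twisted-multiplicative in $q$: for $(q_1,q_2)=1$, applying the Chinese Remainder Theorem to the unit variable, to $\vecb$, and to the argument of $e_{q_1q_2}$ gives $S_{q_1q_2}(\veca)=S_{q_1}(\veca)S_{q_2}(\veca)$, since $F(\veca;\vecb)$ modulo $q_i$ depends only on $\veca$ and $\vecb$ modulo $q_i$. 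As $S_q(\veca)$ depends only on $\veca\bmod q$ and the condition $(q_1q_2,\veca)=1$ is equivalent to $(q_1,\veca)=1$ together with $(q_2,\veca)=1$, decomposing $\veca$ via the CRT gives $\psi(q_1q_2)=\psi(q_1)\psi(q_2)$. It remains to compute $\psi(p^f)$ for each prime power.

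Fix a prime $p$ and an integer $f\ge 1$, and put $G(c)=\sum_{b\bmod p^f}e_{p^f}(cb^2)$, so that $\psi(p^f)=\sum_{\veca}\sum_{\lambda}\prod_{j=1}^s G(\lambda a_j)$, where $\veca$ runs over residues modulo $p^f$ with $(p^f,\veca)=1$ and $\lambda$ over units modulo $p^f$. For each fixed $\lambda$ the map $\veca\mapsto\lambda\veca$ is a bijection of $(\Z/p^f\Z)^s$ preserving $(p^f,\veca)=1$, so the inner product-sum is independent of $\lambda$ and $\psi(p^f)=\varphi(p^f)\sum_{\veca}\prod_{j=1}^s G(a_j)$. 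Writing $a_j=p^{t_j}u_j$ with $p\nmid u_j$ (and $t_j=f$, $G(a_j)=p^f$, when $a_j\equiv0\bmod p^f$), the classical Gauss sum evaluation gives $G(p^tu)=p^t g_{f-t}(u)$, where for odd $p$ one has $g_m(u)=p^{m/2}$ when $m$ is even and $g_m(u)=\big(\tfrac{u}{p}\big)\kappa_{p,m}p^{m/2}$ for some constant $\kappa_{p,m}$ independent of $u$ when $m$ is odd. Since $\sum_{u\in(\Z/p\Z)^\times}\big(\tfrac{u}{p}\big)=0$ and the Legendre symbol depends only on $u\bmod p$, summing over $u_j$ in any coordinate $j$ with $f-t_j$ odd produces $0$; hence only those $\veca$ with $t_j\equiv f\pmod 2$ for all $j$ contribute. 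If $f$ is odd, this forces $t_j\ge1$ for every $j$, contradicting $(p^f,\veca)=1$, so $\psi(p^f)=0$. If $f$ is even, then $G(a_j)=p^{(f+t_j)/2}$ for every surviving $\veca$; using the elementary identities $\sum_{a\bmod p^f,\,2\mid v_p(a)}p^{v_p(a)/2}=p^f$ and $\sum_{a\bmod p^f,\,2\mid v_p(a),\,v_p(a)\ge2}p^{v_p(a)/2}=p^{f-1}$, and subtracting the contribution of those $\veca$ with every $t_j\ge2$ in order to impose $\min_jt_j=0$, I would obtain
\begin{align*}
\sum_{\veca}\prod_{j=1}^s G(a_j)=p^{sf/2}\big(p^{sf}-p^{s(f-1)}\big)=p^{3sf/2}\big(1-p^{-s}\big),
\end{align*}
whence $\psi(p^f)=\varphi(p^f)p^{3sf/2}(1-p^{-s})$, as claimed.

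The hard part will be the local computation at $p^f$, and in particular the prime $p=2$: there $|G(\cdot)|$ is not simply a power of $p^{1/2}$, so the clean dichotomy above fails and one must substitute the explicit evaluation of $\sum_{b\bmod 2^m}e_{2^m}(ub^2)$. A route uniform in $p$ --- and presumably the one underlying \cite[Lemma 6.10]{Browning2} --- is to write $\psi(p^f)=p^fN_f-p^{f-1}N_{f-1}$ with $N_j=\#\{\veca,\vecb\bmod p^f:\ (p^f,\veca)=1,\ p^j\mid F(\veca;\vecb)\}$, to evaluate $N_1$ from the number of points on the quadric $\sum_j a_jb_j^2=0$ over $\mathbb{F}_p$ (the fluctuating terms cancelling after summation over $\veca$ because $\sum_{a\ne0}\big(\tfrac{a}{p}\big)=0$), and to lift to general $N_j$ by Hensel's lemma. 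Everything other than this local computation is purely formal.
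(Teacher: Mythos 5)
Your multiplicativity argument and your evaluation of $\psi(p^f)$ at odd primes are correct, and in fact more explicit than the paper, which gives no local computation at all and simply invokes \cite[Lemma 6.10]{Browning2} with $4$ replaced by $s$. I checked the reduction $\psi(p^f)=\varphi(p^f)\sum_{\veca\ \text{prim}}\prod_j G(a_j)$ via the substitution $\veca\mapsto\lambda\veca$, the Legendre-symbol cancellation forcing $t_j\equiv f\pmod 2$ (legitimate because the primitivity constraint involves only the valuations $t_j$, so for fixed $(t_1,\dots,t_s)$ the sum over the unit parts factors), the two elementary identities, and the inclusion--exclusion step (within the all-$t_j$-even stratum, primitivity is exactly ``not all $t_j\ge 2$''); for odd $p$ this yields $\varphi(p^f)p^{3sf/2}(1-p^{-s})$ for even $f$ and $0$ for odd $f$, as claimed.

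The genuine gap is $p=2$, which the lemma does assert and which is used later in the Euler product $\sum_q q^{-2s}\psi(q)\prod_{p\mid q}(1-p^{-s})^{-1}=\zeta(s-2)/\zeta(s-1)$. Your primary argument does not cover it (as you acknowledge), and the proposed fallback --- evaluate $N_1$ from the quadric over $\mathbb{F}_p$ and ``lift to general $N_j$ by Hensel's lemma'' --- does not work as stated: the solutions with $p\mid\vecb$ are singular points of $F\bmod p$ (there all partials $b_j^2$ and $2a_jb_j$ vanish), they cannot be Hensel-lifted, and they are exactly what produces the vanishing at odd $f$; one must stratify by the valuation of $\vecb$ and recurse, precisely as the paper does when computing $\sigma_p$ at the end of \S 5, and at $p=2$ Hensel for quadratics requires lifting from solutions modulo $8$ in any case. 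The cleanest repair avoids Gauss-sum evaluations altogether and is uniform in $p$: since primitivity only excludes the event $p\mid a_j$ for all $j$, your identity factors by inclusion--exclusion as $\psi(p^f)=\varphi(p^f)\bigl(S_0^s-S_1^s\bigr)$ with $S_0=\sum_{a\bmod p^f}G(a)$ and $S_1=\sum_{a\bmod p^f,\ p\mid a}G(a)$; orthogonality in $a$ gives $S_0=p^f\cdot p^{f-\lceil f/2\rceil}$ and $S_1=p^{f-1}\cdot p^{f-\lceil (f-1)/2\rceil}$, so $S_0=S_1$ when $f$ is odd (whence $\psi(p^f)=0$) and $S_0^s-S_1^s=p^{3sf/2}(1-p^{-s})$ when $f$ is even, for every prime $p$ including $p=2$.
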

We also get an estimate for $J_1(B;k)$, which follows analogously from \cite[Lemma 6.11]{Browning2}.
\begin{lem}
For $s \geq 7$, we have
\begin{align*}
    J_1(B;k) = (s-1) \tau_\infty \log B^{4/(s+2)(s-1)-3s\eta} .
\end{align*}
\end{lem}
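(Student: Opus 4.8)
The plan is to exploit two elementary properties of $\sigma_\infty$. First, $\sigma_\infty$ is invariant under permutations of the coordinates of $\vecx$ (which merely relabels the $y_j$'s) and under the sign changes $x_j \mapsto -x_j$ (substitute $\theta \mapsto -\theta$). Second, $\sigma_\infty$ is homogeneous of degree $-1$: for $\lambda > 0$ one has $\sigma_\infty(\lambda\vecx) = \lambda^{-1}\sigma_\infty(\vecx)$, as the substitution $\theta \mapsto \lambda\theta$ shows; this is the same scaling already used in the form $\sigma_\infty(k\vecx) = k^{-1}\sigma_\infty(\vecx)$. Consequently $\sigma_\infty(\vecx)/|\vecx|^{s-1}$ is homogeneous of degree $-s$, so its integral over the annulus $T_0(k)$ should come out as a constant times $\log(R_2/R_1)$, where $R_1 = B^{3s\eta}/k$ and $R_2 = B^{4/(s+2)(s-1)}/k$; the whole content of the lemma is to pin that constant down as $(s-1)\tau_\infty$.

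First I would split $T_0(k)$ into the $2s$ sectors determined by which coordinate attains $|\vecx|$ and by its sign, up to a set of measure zero; by the invariances above the $2s$ sectors contribute equally. On the sector $\{\,\vecx : x_s = |\vecx| \geq |x_j| \text{ for } j < s\,\}$ I would change variables to $r = x_s \in [R_1,R_2]$ and $t_j = x_j/r \in [-1,1]$ for $j \leq s-1$; the Jacobian is $r^{s-1}$, and since $\sigma_\infty(\vecx) = r^{-1}\sigma_\infty(t_1,\dots,t_{s-1},1)$ and $|\vecx|^{s-1} = r^{s-1}$, the integrand reduces to $r^{-1}\sigma_\infty(t_1,\dots,t_{s-1},1)$. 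Performing the $r$-integration produces $\log(R_2/R_1) = \log B^{4/(s+2)(s-1)-3s\eta}$, so that
\[
J_1(B;k) = 2s\Big(\log B^{4/(s+2)(s-1)-3s\eta}\Big)\int_{[-1,1]^{s-1}}\sigma_\infty(t_1,\dots,t_{s-1},1)\, dt_1\cdots dt_{s-1}.
\]

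It then remains to evaluate this transverse integral, and the idea is to feed the same sector decomposition into $\tau_\infty$ itself. By Fubini applied to the definitions one has $\tau_\infty = \int_{[-1,1]^s}\sigma_\infty(\vecx)\, d\vecx$, the interchange of the $\theta$- and $\vecx$-integrals being licit because $\int_{-1}^1 e(\theta x_j y^2)\, dy \ll \min(1,|\theta x_j|^{-1/2})$, which makes the double integral over $[-1,1]^s\times\R$ absolutely convergent for $s \geq 7$. Running the sector decomposition on $\int_{[-1,1]^s}\sigma_\infty(\vecx)\, d\vecx$ (now with $r \in [0,1]$, so that the radial integral on each sector is $\int_0^1 r^{s-2}\, dr = 1/(s-1)$) gives $\tau_\infty = \tfrac{2s}{s-1}\int_{[-1,1]^{s-1}}\sigma_\infty(t_1,\dots,t_{s-1},1)\, dt_1\cdots dt_{s-1}$, hence the transverse integral equals $\tfrac{s-1}{2s}\tau_\infty$. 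Substituting into the display yields $J_1(B;k) = (s-1)\tau_\infty\log B^{4/(s+2)(s-1)-3s\eta}$, exactly as in \cite[Lemma 6.11]{Browning2} with $4$ variables replaced by $s$. I do not expect a real obstacle: the only delicate point is the convergence bookkeeping near the faces $x_j = 0$ of the cube and, in the $\tau_\infty$ computation, near $r = 0$, and both are handled by the bound $\sigma_\infty(\vecx) \ll |\vecx|^{-1/2}$ (valid for $s \geq 7$, and used already above) together with the fact that the exceptional sets have measure zero.
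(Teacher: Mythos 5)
Your proof is correct and takes essentially the same route as the paper's (which is the argument of \cite[Lemma 6.11]{Browning2} adapted from $4$ to $s$ variables): homogeneity $\sigma_\infty(\lambda\vecx)=\lambda^{-1}\sigma_\infty(\vecx)$, decomposition of the annulus into $2s$ sectors according to which coordinate attains the sup-norm, radial integration giving $\log B^{4/(s+2)(s-1)-3s\eta}$, and identification of the transverse integral as $\tfrac{s-1}{2s}\tau_\infty$ by running the same decomposition on $\tau_\infty=\int_{[-1,1]^s}\sigma_\infty(\vecx)\,d\vecx$. One small wording caveat: $\sigma_\infty$ is not invariant under changing the sign of a single coordinate $x_j$, but the substitution $\theta\mapsto-\theta$ does give invariance under the simultaneous flip $\vecx\mapsto-\vecx$, and this together with permutation invariance is all that the equal-contribution claim for the $2s$ sectors requires.
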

\begin{comment}
    \begin{proof}
The proof is analogous to the one on $M_2(B)$. One begins by noticing that $\sigma_\infty(\vecx) = |\vecx|^{-1}\sigma_\infty(t_1...t_{s-1},1)$, when $|x_s| = |\vecx|$ and $t_i = x_i/|\vecx|$. Again, noticing that permuting coordinates doesn't change the value of the expression, we find that 
\begin{align*}
    J_1 (B;k) &= 2s \int_1^{B^{4/(s+2)(s-1)}/k} \int_{-x_s}^{x_s} \frac{\sigma_\infty(x_1...x_{s-1},x_s)}{|x_s|^{s-1}} dx_1...dx_{s-1} dx_{s}\\
    &= 2s \int_1^{B^{4/(s+2)(s-1)}/k} \frac{dx_s}{x_s} \int_{[-1,1]^{s-1}} \sigma_\infty(t_1...t_{s-1},1) d\vect \\
    &=2s \log B^{4/(s+2)(s-1)}/k \int_{[-1,1]^{s-1}} \sigma_\infty(t_1...t_{s-1},1) d\vect.
\end{align*}
One also note that 
\begin{align*}
    \tau_\infty &= \int_{[-1,1]^s} \sigma_\infty(\vecx) d\vecx \\
    &=2s \int_0^1 x_s^{s-2} \int_{[-1,1]^{s-1}} \sigma_\infty(t_1...t_{s-1},1) d\vect \\
    &=2\frac{s}{s-1}\int_{[-1,1]^{s-1}} \sigma_\infty(t_1...t_{s-1},1) d\vect.
\end{align*}
Equating the 2 expression gives the result
\begin{align*}
     J_1 (B;k) &=2s \log B^{4/(s+2)(s-1)}/k \int_{[-1,1]^{s-1}} \sigma_\infty(t_1...t_{s-1},1) d\vect \\
     &= (s-1) \tau_\infty \log B^{4/(s+2)(s-1)}/k.
\end{align*}
\end{proof}

\end{comment}

Using Lemma 5.10, one sees that
\begin{align*}
    \sum_{\substack{k \leq B^{4/(s+2)(s-1)} \\ (k,q) = 1}} \frac{\mu(k)}{k^s} &J_1(B;k) = (s-1) \tau_\infty \log B^{4/(s+2)(s-1)-3s\eta}\sum_{\substack{k \leq B^{4/(s+2)(s-1)} \\ (k,q) = 1}} \frac{\mu(k)}{k^s}  \\
    &=(s-1) \tau_\infty \log B^{4/(s+2)(s-1)-3s\eta}\sum_{\substack{k=1\\(k,q) = 1}}^\infty \frac{\mu(k)}{k^s} + O(B^{-4/(s+2)}\log B)\\
    &=\frac{(s-1) \tau_\infty \log B^{4/(s+2)(s-1)-3s\eta}}{\zeta(s)}\prod_{p \mid q}(1-p^{-s})^{-1}  + O(B^{-4/(s+2)}\log B).
\end{align*}
Finally, from Lemma 5.9 we see that $\psi$ is only supported on squares with $\psi(q) \leq q^{3s/2+1}$. It follows from Lemma 5.6 and 5.10 that 
\begin{align*}
    M_1(B) = \frac{(s-1) \tau_\infty \log B^{4/(s+2)(s-1)-3s\eta}}{\zeta(s)}\sum_{q=1}^\infty q^{-2s}\psi(q)\prod_{p \mid q}(1-p^{-s})^{-1} +O_\eta(1).
\end{align*}
From Lemma 5.9 we have
\begin{align*}
    \sum_{q=1}^\infty q^{-2s}\psi(q)\prod_{p \mid q}(1-p^{-s})^{-1} &= \prod_p \Big(1+\frac{p^{-4s}\psi(p^2)+p^{-8s}\psi(p^4)+...}{1-p^{-s}}\Big)\\
    &= \prod_p\Big(1+(1-p^{-1})(\sum_{f=1}^\infty p^{(2-s)f})\Big)\\
    &=\prod_p \Big(1+\frac{p-1}{p} \frac{p^{3-s}-p^{2-s}}{p-p^{3-s}}\Big)\\
    &= \prod_p \Big(\frac{1-p^{1-s}}{1-p^{2-s}} \Big)\\
    &=\frac{\zeta(s-2)}{\zeta(s-1)}.
\end{align*}
We can then conclude from Lemma 5.6, 5.8, 5.9 and 5.10 that
\begin{align*}
    M_1(B) = \frac{4\zeta(s-2)}{(s+2)\zeta(s)\zeta(s-1)} \tau_\infty \log B + O(\eta B\log B)+ O_\eta(B).
\end{align*}
Combining with results from \S 4 we see that 
\begin{align*}
    N(\Omega,B) &=\frac{B}{4\zeta(s-2)}M_1(B) + \frac{B}{4\zeta(s-1)}M_2(B)+O(\eta B\log B)+ O_\eta(B)
\end{align*}
which holds for all $\eta \in (0,1/s^4)$. Let the first error term be $E_1 \leq C_1\eta B\log B$, and the second error term be $E_2 \leq c_2(\eta)B$. We wish to show that for any small $\varepsilon >0$, there exists $B(\varepsilon)$ such that $E_i \leq \varepsilon B \log B $, $i = 1,2$, for all $B \geq B(\varepsilon)$. This will then imply that $E_i = o(B \log B)$, which is satisfactory. 

Let $\eta = \min(1/s^4,\varepsilon/C_1)$, then $E_1 \leq \varepsilon B \log B$ for all $B$.  With this choice of $\eta$, set
\begin{align*}
    B(\varepsilon) = e^{c_2(\eta)/\varepsilon},
\end{align*}
then 
\begin{align*}
    c_2(\eta)B \leq \varepsilon B \log B(\varepsilon) \leq \varepsilon B \log B
\end{align*}
when $B \geq B(\varepsilon)$. This proves the claim. 

In particular, it then follows that
\begin{align*}
   N(\Omega,B) &\sim \frac{1}{4\zeta(s)\zeta(s-1)}\tau_\infty B\log B.
\end{align*}
\indent To complete the proof of Theorem 1.1 we need to show that the leading constant agrees with the constant $c_{\textit{Peyre}}$ predicted by Peyre in \cite{Peyre}. According to Schindler \cite[\S 3]{Schindler}, one has
\begin{align*}
    c_{\textit{Peyre}} = &\frac{1}{4\zeta(s-1)\zeta(s-2)}\tau_\infty \prod_p \sigma_p,
\end{align*}
where $\tau_\infty$ is as previously defined in Theorem 1.1 and 
\begin{align*}
    \sigma_p &= \lim_{l \rightarrow \infty} p^{-(2s-1)l}\#\big\{(\vecx,\vecy) \text{ mod }p^l: F(x;y)\equiv 0 \text{ mod } p^l\big\}.
\end{align*}
We note that 
\begin{align*}
    n(p^l) &= \sum_{j=0}^l \sum_{\substack{y \text{ mod } p^l \\ (y,p^l) = p^j}}\#\big\{\vecx \in (\mathbb{Z}/p^l\mathbb{Z})^s: F(x;y)\equiv 0 \text{ mod } p^l\big\}\\
    &= \sum_{j=0}^{[l/2]} \sum_{\substack{(u,p)=1 \\ u \text{ mod } p^{l-j} }}\#\big\{\vecx \in (\mathbb{Z}/p^l\mathbb{Z})^s: F(x;u)\equiv 0 \text{ mod } p^{l-2j}\big\} + O(p^{3/2l}).
\end{align*}
Since $(u,p^j) = 1$, the number of solutions for each $x_i \in \mathbb{Z}/p^{l-2j}\mathbb{Z}$ is $p^{(l-2j)(s-1)}$, and thus 
\begin{align*}
   n(p^l) &=\sum_{j=0}^{[l/2]} \sum_{\substack{(u,p)=1 \\ u \text{ mod } p^{l-j} }} p^{(l-2j)(s-1)}p^{2sj} + O(p^{3s/2l})\\
    &= \sum_{j=0}^{[l/2]}(p^{s(l-j)}-p^{s(l-j)-s})p^{(l-2j)(s-1)}p^{2sj} + O(p^{3s/2l})\\
    &=(1-p^{-s})\sum_{j=0}^{[l/2]}p^{2sl-l+(2-s)j} + O(p^{3s/2l}) \\ 
    &=\frac{1-p^{-s}}{1-p^{s-2}}p^{(2s-1)l}+ O(p^{3s/2l}).
\end{align*}
In particular, taking limit as $l \rightarrow \infty$ , we see that for $s \geq 7$
\begin{align*}
    c = &\frac{1}{4\zeta(s-1)\zeta(s-2)}\tau_\infty \prod_p  \lim_{l \rightarrow \infty} p^{-(2s-1)l}n(p^l)\\
    = &\frac{1}{4\zeta(s-1)\zeta(s)}\tau_\infty.
\end{align*}
\indent Thus our prediction agrees with the constant predicted by Peyre. This proves the claim and Theorem 1.1 follows. 

\appendix
\section{}
We resolve an issue raised in the proof of Lemma 2.2, where we replaced the $L^2$ norm with $L^\infty$ norm in using \cite[Lemma 2]{Schmidt}. 

Let $\Lambda^k$ be lattice with determinant $d(\Lambda^{k})$ in euclidean space $\mathbb{R}^n$ with $n \geq k$, let $\vecx_1,\cdots,\vecx_k$ be vectors in the lattice such that $\|\vecx_1\|_2,\cdots,\|\vecx_k\|_2$ are the successive minima of $\Lambda^k$. Let $\Lambda^{k(|-1)}$ with determinant $d(\Lambda^{k(|-1)})$ be the sublattice of $\Lambda^{k}$ which are spanned by $\vecx_1,\cdots,\vecx_{k-1}$, and let $\Lambda^{k(|-i)}$ with determinant $d(\Lambda^{k(|-i)})$ be the sublattice of $\Lambda^{k(|-i+1)}$ spanned by $\vecx_1,\cdots,\vecx_{k-i}$. For notational convenience let $\Lambda^{k(|-k)}$ be the $0$ lattice and $\Lambda^{k(|-0)} = \Lambda^k$. Finally let $R^k$ be the intersection of the unit ball in $\R^n$ under $L^\infty$ norm with the vector subspace spanned by $\Lambda^k$; let $N(r)$ be the number of lattice points $\vecx$ of $\Lambda^{k}$ which are also in $R^k$ and let $V(k)$ denote the volume of $R^k$. 

Recall that all norms in $\mathbb{R}^n$ are equivalent; it therefore suffices to establish that 
\begin{align*}
    \left|N(r) - \frac{V(k)r^k}{d(\Lambda^k)}\right| \ll_k 1 + \sum_{j=1}^{k-1} \frac{r^{k-j}}{|\vecx_1|...|\vecx_j|}. 
\end{align*}
We will prove the claim through induction on $k$. Note that by resizing $\Lambda^k$ if necessary, we may assume that $r = 1$. 

When $k=1$, $ \left|N(1) - {V(k)}/{d(\Lambda^k)}\right| \leq 2 \ll_k 1$. 

Suppose now that the claim holds for $k-1$; we show that it holds for $k$. 

First assume that $\|\vecx_k\|_2 \leq 1$. Consider the parallelopiped spanned by $\vecx_1, \cdots, \vecx_{k}$, which will have diameter $ \leq \|\vecx_1\|_2 + ... +  \|\vecx_{k}\|_2 \leq k\|\vecx_k\|_2$. In particular, this implies that $\left|N(1) - {V(k)}/{d(\Lambda^k)}\right|$ won't exceed $1/d(\Lambda^k)$ times the volume of the set of points within $k\|\vecx_k\|_2$ of the border of $R^k$. In particular, this volume is bounded above by 
\begin{align*}
     \leq 2k\|\vecx_k\|_2  \text{vol}(\partial((1+k\|\vecx_k\|_2) R^k)) \ll_k \|\vecx_k\|_2.
\end{align*}
This implies that 
\begin{align*}
    \left|N(1) - \frac{V(k)}{d(\Lambda^k)}\right| &\ll_k \frac{\|\vecx_k\|_2}{d(\Lambda^k)} \\
    &\ll_k \frac{\|\vecx_1\|_2...\|\vecx_{k}\|_2}{d(\Lambda^k)d(\Lambda^{k(|-1)})}\\
    &\ll_k \frac{1}{d(\Lambda^{k(|-1)})}\\
    &\ll_k \frac{1}{\|\vecx_1\|_2...\|\vecx_{k-1}\|_2} \ll_k \frac{1}{|\vecx_1|...|\vecx_{k-1}|},
\end{align*}
where the $2$nd, $3$rd and $4$th inequalities come from Minkowski's second theorem. This establishes the claim when $\|\vecx_k\|_2 \leq 1$. 

Suppose now that $\|\vecx_k\|_2 > 1$. Then any point in $R^k \cap \Lambda^k$ will live in $\Lambda^{k(|-1)}$ by our construction. In particular, from inductive hypothesis we see that $N(1) \ll_k 1+ \sum_{j=1}^{k-1} 1/{|\vecx_1|...|\vecx_j|}$. From Minkowski's second theorem we see that since $\|\vecx_k\|_2 > 1$ and
\begin{align*}
    \frac{\|\vecx_k\|_2}{d(\Lambda^k)} &\ll_k \frac{1}{d(\Lambda^{k(|-1)})} \ll_k \frac{1}{|\vecx_1|...|\vecx_{k-1}|}, 
\end{align*}
we have
\begin{align*}
        \frac{1}{d(\Lambda^k)} &\ll_k \frac{1}{|\vecx_1|...|\vecx_{k-1}|}.
\end{align*}
In particular, this implies also that $V(k)/d(\Lambda^k) \ll_k 1+ \sum_{j=1}^{k-1}, 1/{|\vecx_1|...|\vecx_j|}$. As such we have that 
\begin{align*}
     \left|N(1) - \frac{V(k)}{d(\Lambda^k)}\right| \ll_k 1+ \sum_{j=1}^{k-1} \frac{1}{|\vecx_1|...|\vecx_j|},
\end{align*}
which finishes the inductive step and concludes the proof. 
\newpage
%%% For an unnumbered display, use
%%% \begin{equation*}
%%%   The display goes here
%%% \end{equation*}

%%% To import a graphics file, you must have said
%%% \usepackage{graphicx}
%%% in the preamble (i.e., before the \begin{document}).
%%% Putting it into a figure environment enables it to float to the
%%% next page if there isn't enough room for it on the current page.
%%% The \label command must come after the \caption command.
% \begin{figure}[h]
%   \includegraphics{filename}
%   \caption{Some caption}
%   \label{somelabel}
% \end{figure}

%%% -------------------------------------------------------------------
%%% -------------------------------------------------------------------
%%% This is where we create the bibliography.

\begin{bibdiv}
  \begin{biblist}

%%% The format of bibliography items is as in the following examples:
%%% 

\bib{BT}{article}{
AUTHOR={V.V. Batyrev and Y. Tschinkel},
TITLE={Rational points on some Fano cubic bundles}, 
JOURNAL={C. R. Acad. Sci. Paris S{\'e}r. I Math.},
VOLUME={323},
DATE={1996}, 
%ISSUE={1}, 
PAGES={41--46} 
}

\bib{Browning1}{article}{
   AUTHOR={T.D. Browning and L .Q. Hu},
   TITLE={Counting rational points on biquadratic hypersurfaces},
   DATE={2018},
 }

  \bib{Browning2}{article}{
   AUTHOR={T.D. Browning and D.R. Heath-Brown},
   TITLE={Density of Rational Points on a Quadric Bundle in $\mathbb{P}^s \times \mathbb{P}^s $},
   DATE={2020},
  PUBLISHER={Duke Math Journal},
 }
 \bib{Davenport}{book}{
   AUTHOR={H. Davenport},
   TITLE={Analytic Methods for Diophantine Equations and Diophantine Inequalities},
   DATE={2005},
  PUBLISHER={Cambridge Mathematical Library},
 }

 \bib{FMT}{article}{
 AUTHOR={ J. Franke  and Y. I. Manin and Y. Tschinkel}, 
 TITLE={Rational points of bounded height on Fano varieties}, 
 JOURNAL={Inventiones Mathematicae},
 VOLUME ={95}, 
 DATE={1989}, 
%ISSUE={2}, 
 PAGES={421--435}
 }
 
\bib{HB}{article}{
 AUTHOR={D. R. Heath-Brown}, 
 TITLE={A new form of the circle method, and its application to quadratic forms}, 
 JOURNAL={J. Reine Angew. Math.}, 
 VOLUME={481}, 
 DATE={1996}, 
 PAGES={149--206}
 }

 \bib{HB2}{article}{
 AUTHOR={D. R. Heath-Brown}, 
 TITLE={Cubic forms in 14 variables}, 
 JOURNAL={Invent. math. }, 
 VOLUME={170}, 
 DATE={2007}, 
 PAGES={199--230}
 }

 \bib{HeathBrown}{book}{
   AUTHOR={D.R. Heath-Brown},
   TITLE={The density of rational points on curves and surfaces},
   DATE={2002},
     VOLUME={155},
  PUBLISHER={Annals of Mathematics},
 }

  \bib{Iwaniek}{book}{
   AUTHOR={H. Iwaniek},
   TITLE={Topics in Classical Automorphic Forms},
   DATE={1997},
  PUBLISHER={Amer. Math. Soc.},
 }

\bib{LB}{article}{
AUTHOR = {P. Le Boudec},
 TITLE={Density of rational points on a certain smooth bihomogeneous threefold},
JOURNAL={Int. Math. Res. Not. IMRN},
DATE = {2015}, 
VOLUME= {21}, 
PAGES={10703--10715} 
}

\bib{Minkowski}{article}{
AUTHOR={Minkowski, H.},
TITLE={Geometrie der Zahlen},
JOURNAL = {Teubner, Leipzig-Berlin}
%ISSUE = {1},
DATE ={1896},
PAGES={199--218}
}

 \bib{Peyre}{article}{
   AUTHOR={E. Peyre},
   TITLE={Hauteurs et mesures de Tamagawa sur les variétés de Fano},
   DATE={1995},
  VOLUME={79},
  JOURNAL={Duke Math Journal},
 }

\bib{Rob}{article}{
AUTHOR={Robbiani, M.},
TITLE={On the number of rational points of bounded height on smooth bilinear hypersurfaces in biprojective space.},
JOURNAL = {Journal of the London Mathematical Society (2)}, VOLUME={63}, 
%ISSUE = {1},
DATE ={2001},
PAGES={33--51}
}

  \bib{Schindler}{article}{
   AUTHOR={D. Schindler},
   TITLE={Manin's conjecture for certain biprojective hypersurfaces},
   DATE={2016},
  VOLUME={714},
  JOURNAL={J. reine angew},
 }
 
\bib{Schmidt}{article}{
   AUTHOR={W.M. Schmidt},
   TITLE={Asymptotic formulae for point lattices of bounded determinant and subspaces of bounded height},
   DATE={1968},
  VOLUME={35},
  JOURNAL={Duke Math Journal},
 }

\bib{Spencer}{article}{
 AUTHOR={Spencer, C. V.}, 
 TITLE={The Manin conjecture for $x_0y_0+\ldots+x_sy_s=0$},
 JOURNAL={Journal of Number Theory},
 VOLUME = {129}, 
 %ISSUE = {6}, 
 DATE={2009},
 PAGES = {1505--1521}
 }

\bib{Thunder}{article}{
 AUTHOR = {Thunder, J. L.},
 TITLE={Asymptotic estimates for rational points of bounded height on flag varieties}, 
 JOURNAL={Compositio Mathematica},
 VOLUME = {88}, 
 %ISSUE = {2}, 
 DATE ={1993}, 
 PAGES ={155--186}
 }

 \bib{Titchmarsh}{book}{
   AUTHOR={E.C. Titchmarsh and  D.R.Heath-Brown},
   TITLE={The theory of the Riemann zeta-function. 2nd ed.},
   DATE={1986},
   PUBLISHER={Oxford University Press},
 }
%%% \BIB{HA}{BOOK}{
%%%   AUTHOR={QUILLEN, DANIEL G.},
%%%   TITLE={HOMOTOPICAL ALGEBRA},
%%%   SERIES={LECTURE NOTES IN MATHEMATICS},
%%%   VOLUME={43},
%%%   PUBLISHER={SPRINGER-VERLAG},
%%%   ADDRESS={BERLIN-NEW YORK},
%%%   DATE={1967}
%%% }

%%% \BIB{SERRE:SHFS}{ARTICLE}{
%%%   AUTHOR={SERRE, JEAN-PIERRE},
%%%   TITLE={HOMOLOGIE SINGULI\`ERE DES ESPACES FIBR\'ES.  APPLICATIONS},
%%%   JOURNAL={ANN. OF MATH. (2)},
%%%   DATE={1951},
%%%   VOLUME={54},
%%%   PAGES={425--505}
%%% }
\end{biblist}
\end{bibdiv}

\end{document}